\let\expandafter\xbf\csname bfseries \endcsname
\let\expandafter\xmd\csname mdseries \endcsname
\let\xbar\bar
\let\bar\xbar
\let\csname bfseries \endcsname\xbf
\let\csname mdseries \endcsname\xmd
\newtheorem{Th}{Theorem}[section]
\newtheorem{Prop}[Th]{Proposition}
\newtheorem{Lem}[Th]{Lemma}
\newtheorem{Cor}[Th]{Corollary}
\newtheorem{Rem}[Th]{Remark}
\newtheorem*{Rem*}{Remark}
\newcommand{\Rn}{\mathbb{R}^n}
\newcommand{\NN}{\mathbb{N}}
\newcommand{\WW}{\mathbb{W}}
\newcommand{\CC}{\mathbb{C}}
\newcommand{\ZZ}{\mathbb{Z}}
\DeclareMathOperator{\supp}{supp}
\title
[Fourier-Bessel Variation operators]
{Variation operators for semigroups associated with Fourier-Bessel expansions}
\author[J. J. Betancor]{J. J. Betancor}
\address{Jorge J. Betancor\newline
	Departamento de An\'alisis Matem\'atico, Universidad de La Laguna,\newline
	Campus de Anchieta, Avda. Astrof\'isico S\'anchez, s/n,\newline
	38721 La Laguna (Sta. Cruz de Tenerife), Spain}
\email{jbetanco@ull.es}
\author[A. J. Castro]{A. J. Castro}
\address{\newline
       Alejandro J. Castro \newline
       Department of  Mathematics,
       Nazarbayev University, \newline
	   Kabanbay Batyr Ave. 53, Nur-Sultan 010000 Kazakhstan}
\email{alejandro.castilla@nu.edu.kz}
\author[M. De Le\'on-Contreras]{M. De Le\'on-Contreras}
\address{\newline
       Marta De Le\'on-Contreras \newline
       Department of  Mathematics and Statistics,
       University of Reading, \newline
	   Reading RG6 6AX, United Kingdom}
\email{m.deleoncontreras@reading.ac.uk }
\keywords{}
\subjclass[2010]
{}
 \thanks{
J. J. B. was partially supported by PID2019-106093GB-I00,
A. J. C.  by the Nazarbayev University FDCRGP  110119FD4544 and 
M. D L-C by EPSRC Research Grant EP/S029486/1.}
\begin{document}

\maketitle

\begin{abstract}
In this paper we establish $L^p$-boundedness properties for variation operators defined by semigroups associated with Fourier-Bessel expansions.
\end{abstract}

\setcounter{secnumdepth}{3}
\setcounter{tocdepth}{3}


\section{Introduction}

Variation norms allow us to estimate, in certain sense, the fluctuations of a given family of operators. The results obtained by using variation norms are better than the ones deduced by square functions. Variation inequalities can be used to measure the speed of convergence of the underlying family of operators. Also, they allow us to obtain pointwise convergence without using the Banach principle that is needed when maximal inequalities are considered.\\

L\'epingle (\cite{Le}) proved the first variational inequality for martingales.
Later, Bourgain (\cite{Bou}) established them in the ergodic theory context for $L^2$. This work was extended to $L^p$, $1 \leq p < \infty$, by 
Jones, Kaufman, Rosenblatt and Wierdl (\cite{JKRW}).
Bourgain's work opened new lines of research. One of such was the study of variation inequalities in harmonic analysis.
Campbell, Jones, Reinhold and Wierdl
(\cite{CJRW1}) proved these inequalities for the Hilbert transform.
In \cite{CJRW2} they extended the results to singular integrals in higher dimensions. 
Since then, variation inequalities have been established for semigroups of operators and singular integrals in several settings
(see for example 
\cite{
BFHR,
CMMTV,
DMT,
GT,
HM,
HLP,
JSW,
JW,
LeMX,
MTX,
MT,
OSTTW,
PX}).\\

Suppose that $\{T_t\}_{t>0}$ is a family of operators defined in $L^p(\Omega)$, for some 
$1 \leq p \leq \infty$ and 
$\Omega \subseteq \Rn$.
If $\{t_j\}_{j \in \NN}$ is a decreasing 
sequence in $(0,\infty)$, the \textit{oscillation operator} 
$\mathcal{O}(\{T_t\}_{t>0},\{t_j\}_{j \in \NN})$ is defined by
\begin{equation*}
\mathcal{O}(\{T_t\}_{t>0},\{t_j\}_{j \in \NN})(f)(x)
:=
\Big(
\sum_{j=1}^\infty
\sup_{t_{j+1} \leq \varepsilon_{j+1} < \varepsilon_j \leq t_j}
\Big| 
T_{\varepsilon_j}(f)(x)
-
T_{\varepsilon_{j+1}}(f)(x)
\Big|^2
\Big)^{1/2}, \quad x \in \Omega.
\end{equation*}

Let $\rho>2$. The 
\textit{variation operator}
$\mathcal{V}_\rho(\{T_t\}_{t>0})$ is defined by
\begin{equation*}
\mathcal{V}_\rho(\{T_t\}_{t>0})(f)(x)
:=
\sup_{\{\varepsilon_j\}_{j \in \NN} \searrow}
\Big(
\sum_{j=1}^\infty
\Big| 
T_{\varepsilon_j}(f)(x)
-
T_{\varepsilon_{j+1}}(f)(x)
\Big|^\rho
\Big)^{1/\rho}, \quad x \in \Omega,
\end{equation*}
where the supremum is taken over all decreasing sequences $\{\varepsilon_j\}_{j \in \NN}$ in 
$(0,\infty)$.\\

It is usual to assume that $\rho>2$ in order to get $L^p$-boundedness for $\mathcal{V}_\rho$
(see \cite{Q}). We can define oscillation operators where the exponent $2$ is replaced by $r>2$. Then, the new $r$-operators are dominated by the $2$-operator. The $L^p$-boundedness of the $r$-oscillation operator when $r<2$ usually fails (see \cite{AJS}).\\

For every $k \in \NN$, consider
\begin{equation*}
\mathcal{V}_k(\{T_t\}_{t>0})(f)(x)
:=
\sup_{
\substack{2^{-k} < \varepsilon_\ell < \varepsilon_{\ell-1} < \dots < \varepsilon_1 <2^{-k+1} \\
\ell \in \NN}}
\Big(
\sum_{j=1}^{\ell-1}
\Big| 
T_{\varepsilon_j}(f)(x)
-
T_{\varepsilon_{j+1}}(f)(x)
\Big|^2
\Big)^{1/2}, \quad x \in \Omega.
\end{equation*}
The \textit{short variation operator} 
$\mathcal{S}_V(\{T_t\}_{t>0})$ is defined then by
\begin{equation*}
\mathcal{S}_V(\{T_t\}_{t>0})(f)(x)
:= 
\Big( 
\sum_{k=-\infty}^\infty
\Big|
\mathcal{V}_k(\{T_t\}_{t>0})(f)(x)
\Big|^2
\Big)^{1/2}, \quad x \in \Omega.
\end{equation*}

As in \cite[p. 60]{CJRW1}, it is convenient to comment that the operators $\mathcal{O}$,
$\mathcal{V}$ and $\mathcal{S}_V$ define measurable functions provided that $\{T_t\}_{t>0}$ satisfies some continuity properties  with respect to the parameter $t$.\\

For every $\lambda>0$, the \textit{$\lambda$-jump operator} is given by
\begin{align*}
\Lambda(\{T_t\}_{t>0},\lambda)(f)(x)
& :=
\sup\Big\{
n \in \NN \text{ : there exist }
s_1 < t_1 \leq s_2 < t_2 < \dots \leq s_n < t_n \\
& \hspace{1.5cm}
\text{such that } 
\Big| T_{t_i}(f)(x) - T_{s_i}(f)(x)\Big|> \lambda, \quad i=1, 2, \dots, n
\Big\}.
\end{align*}
If for some $x \in \Omega$ there exists the limit
$$\lim_{t \to 0^+} T_t(f)(x)$$
then, for every $\lambda>0$,
$$\Lambda(\{T_t\}_{t>0},\lambda)(f)(x)<\infty,$$
and 
$\Lambda(\{T_t\}_{t>0},\lambda)(f)(x)$
gives information about the convergence of $\{T_t(f)(x)\}_{t>0}$.\\

In this paper we study variation operators defined by semigroups of operators associated with Fourier-Bessel expansions.\\

Let $\nu>-1$. We consider the Bessel operator
$$B_\nu
:=
-\frac{d^2}{dx^2} - \frac{2 \nu + 1}{x} \frac{d}{dx}
\quad \text{on} \quad (0,1).$$
Let $J_\nu$ represents the Bessel function of the first kind and order $\nu$ and denote by 
$\{\lambda_{n,\nu}\}_{n \in \mathbb{N}}$ the sequence of positive zeros of $J_\nu$, with
$\lambda_{n+1,\nu} > \lambda_{n,\nu}$, 
$n \in \mathbb{N}$. We define, for every $n \in \mathbb{N}$,
$$\phi_n^\nu(x)
:=
d_{n,\nu} \,
\lambda_{n,\nu}^{1/2} \,
J_\nu(\lambda_{n,\nu} x) \,
x^{-\nu}, \quad x \in (0,1),$$
where
$$d_{n,\nu}
:= \frac{\sqrt{2}}{|\lambda_{n,\nu}^{1/2} J_{\nu+1}(\lambda_{n,\nu})|}.$$
We have that 
$$
B_\nu \phi_n^\nu
=
\lambda_{n,\nu}^2 \phi_n^\nu, \quad n \in \mathbb{N}.$$
The sequence $\{\phi_n^\nu\}_{n \in \mathbb{N}}$ is a complete orthonormal basis in 
$L^2((0,1),x^{2 \nu + 1}dx)$ (\cite{Ho}).
For every $n \in \mathbb{N}$, consider also
$$
c_n^\nu(f)
:=
\int_0^1 
\phi_n^\nu(x)
f(x)
x^{2 \nu + 1} \, dx
, \quad f \in L^2((0,1),x^{2 \nu + 1}dx).$$

We define
$$\Delta_\nu f
:=
\sum_{n=1}^\infty 
\lambda_{n,\nu}^2 
c_n^\nu(f)
\phi_n^\nu, 
\quad f \in D(\Delta_\nu),$$
where
$$D(\Delta_\nu)
:=
\Big\{ f \in L^2((0,1),x^{2 \nu + 1}dx)
\text{ : }
\sum_{n=1}^\infty 
|\lambda_{n,\nu}^2 
c_n^\nu(f)|^2 < \infty
\Big\}$$
is the domain of $\Delta_\nu$.
It is clear that 
$$\Delta_\nu f
=
B_\nu f, \quad
f \in C^\infty_c(0,1),$$
where $C^\infty_c(0,1)$ denotes the space of smooth functions with compact support in $(0,1)$. $\Delta_\nu$ is symmetric and positive in $L^2((0,1),x^{2 \nu + 1}dx)$. The operator
$-\Delta_\nu$ generates the semigroup of operators
$\{W_t^\nu\}_{t>0}$, where 
$$
W_t^\nu(f)
:=
\sum_{n=1}^\infty
e^{-t \lambda_{n,\nu}^2} c_n^\nu(f) \phi_n^\nu,
\quad t>0 
\quad \text{and} \quad
f \in L^2((0,1),x^{2 \nu + 1}dx).
$$
For every $t>0$ and $f \in L^2((0,1),x^{2 \nu + 1}dx)$, we have that
\begin{equation}\label{eq:1.1}
W_t^\nu(f)(x)
=
\int_0^1 W_t^\nu(x,y) f(y) y^{2\nu+1} \, dy, \quad x\in (0,1),
\end{equation}
where
$$
W_t^\nu(x,y)
:=
\sum_{n=1}^\infty
e^{-t \lambda_{n,\nu}^2} \phi_n^\nu(x) \phi_n^\nu(y),\quad x,y\in (0,1).
$$

According to \cite[Theorem 1.1]{MSZ} (see also 
\cite[Theorem 3.3]{NR1}), for every $t>0$, the operator
$W_t^\nu$ given by \eqref{eq:1.1} is bounded from
$L^p((0,1),x^{2 \nu + 1}dx)$ into itself, for every $1 \leq p \leq \infty$. The maximal operator $W_*^\nu$ defined by
$$
W_*^\nu f
:=
\sup_{t>0} |W_t^\nu(f)|$$
is bounded from $L^p((0,1),x^{2 \nu + 1}dx)$ into itself, for
every $1<p<\infty$, and from 
$L^1((0,1),x^{2 \nu + 1}dx)$ into 
$L^{1,\infty}((0,1),x^{2 \nu + 1}dx)$ (\cite[Theorem 4.1]{NR1}).\\

The Poisson semigroup $\{P_t^\nu\}_{t>0}$ generated by
$-\sqrt{\Delta_\nu}$ is given by
$$
P_t^\nu(f)
:=
\sum_{n=1}^\infty
e^{-t \lambda_{n,\nu}} c_n^\nu(f) \phi_n^\nu, \quad t>0.
$$
Also, by using the subordination formula we can write, for $ t>0$,
\begin{equation}\label{eq:subordination}
P_t^\nu(f)
=
\frac{t}{2\sqrt{\pi}}
\int_0^\infty 
\frac{e^{-t^2/(4u)}}{u^{3/2}}
W_u^\nu(f)
\, du=\int_0^1 P_t^\nu(x,y)f(y)y^{2\nu+1}\;dy, 
\end{equation}
where $$
P_t^\nu(x,y)
:=\int_0^\infty 
\frac{e^{-t^2/(4u)}}{u^{3/2}}
W_u^\nu(x,y)
\, du, \: x,y\in (0,1).
$$
The maximal operator
$$
P_*^\nu f
:=
\sup_{t>0} |P_t^\nu(f)|$$
is bounded from $L^p((0,1),x^{2 \nu + 1}dx)$ into itself, for
every $1<p<\infty$, and from $L^1((0,1),x^{2 \nu + 1}dx)$ into 
$L^{1,\infty}((0,1),x^{2 \nu + 1}dx)$.\\

The harmonic analysis associated with $\{\phi_n^\nu\}_{n \in \mathbb{N}} $-expansions started in the celebrated paper by Muckenhoupt and Stein \cite{MS}.
In the last decade harmonic analysis operators 
(Riesz transforms, Littlewood-Paley functions, multipliers and transplantation operators) in the 
$\{\phi_n^\nu\}_{n \in \mathbb{N}} $-setting have been studied in 
\cite{ABDR,BS1,BS2,CR3,CR2,CR1,CS1,CS2,NR3}.

Let $\beta\ge 0$ and $m \in \mathbb{N}$ such that $m-1 \leq \beta < m$. Suppose that $h \in C^m(0,\infty)$. The $\beta$-th Weyl derivative  $\mathbb{D}^\beta h$ is defined by
\begin{equation*}
\mathbb{D}^\beta h(t)
:=
\frac{-1}{\Gamma(m-\beta)}
\int_0^\infty h^{(m)}(t+s) s^{m-\beta-1} \, ds, 
\quad t >0,
\end{equation*}
whenever $\displaystyle\int_0^\infty |h^{(m)}(t+s)| s^{m-\beta-1} \, ds<\infty, \quad t >0.$

In particular, let $\beta\in\NN_0:=\NN\cup\{0\}$. We take $m=\beta+1$. Suppose that $t>0$ and  $\displaystyle\int_0^\infty |h^{(m)}(t+s)| s^{m-\beta-1} \, ds<\infty.$ Then, there exists the limit
$$
h^{(\beta)}(+\infty):=\lim_{s\to\infty}h^{(\beta)}(s)
$$
and
$$
\mathbb{D}^\beta h(t)=-\int_0^\infty h^{(\beta+1)}(t+s)  \, ds=-h^{(\beta)}(+\infty)+h^{(\beta)}(t).
$$
Therefore, if $h^{(\beta)}(+\infty)=0$,  $$\mathbb{D}^\beta h(t)=h^{(\beta)}(t).$$

Taking this into account and admitting a slight abuse of notation, we will denote the usual and the Weyl derivatives in the same way.

Our fist result concerns $L^p$-boundedness of variation, oscillation and jump operators for fractional derivatives of Fourier-Bessel Poisson semigroups.

\begin{Th}\label{Th:1.1}
Let $\beta \geq 0$, $\rho>2$, $\lambda>0$, $\nu > -1$
and $\{t_j\}_{j \in \NN}$ be a decreasing sequence in $(0,\infty)$.
Then, the operators
\begin{multicols}{2}
\begin{itemize}
\item[$i)$] $\mathcal{V}_\rho(\{t^\beta \partial_t^\beta P_t^\nu\}_{t>0})$,\\
\item[$ii)$] $\lambda [\Lambda(\{t^\beta \partial_t^\beta P_t^\nu\}_{t>0},\lambda)]^{1/p}$,
\end{itemize}
\begin{itemize}
\item[$iii)$] $\mathcal{O}(\{t^\beta \partial_t^\beta P_t^\nu\}_{t>0},\{t_j\}_{j \in \NN})$, \\
\item[$iv)$] $\mathcal{S}_V(\{t^\beta \partial_t^\beta P_t^\nu\}_{t>0})$,
\end{itemize}
\end{multicols}
are bounded from 
$L^p((0,1),x^{2 \nu + 1}dx)$ into itself, for
every $1<p<\infty$, and from $L^1((0,1),x^{2 \nu + 1}dx)$ into 
$L^{1,\infty}((0,1),x^{2 \nu + 1}dx)$.
\end{Th}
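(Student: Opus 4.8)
The plan is to treat $i)$--$iv)$ simultaneously through a vector-valued formulation. Fix a Banach space $E$ of functions of $t\in(0,\infty)$ and associate with the family $\{t^\beta\partial_t^\beta P_t^\nu\}_{t>0}$ the $E$-valued operator $f\mapsto\{t^\beta\partial_t^\beta P_t^\nu(f)(\cdot)\}_{t>0}$. Choosing for $E$ the $\rho$-variation space yields $\mathcal{V}_\rho$, the oscillation space attached to $\{t_j\}_{j\in\NN}$ yields $\mathcal{O}$, and the short-variation space yields $\mathcal{S}_V$; so it suffices to prove that this operator is bounded from $L^p((0,1),x^{2\nu+1}dx)$ into its $E$-valued analogue for $1<p<\infty$ and of weak type $(1,1)$. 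The jump operator in $ii)$ is not an $E$-norm, but it is governed by the same data: I will use the elementary bound that, whenever $\{A_t\}_{t>0}$ comes from a smooth integral kernel, $\lambda[\Lambda(\{A_t\},\lambda)]^{1/p}\le\int_0^\infty|\partial_t A_t|\,dt$ uniformly in $\lambda$ (since $\lambda\,\Lambda(\{A_t\},\lambda)$ is bounded by the total variation and $\Lambda$ is integer-valued), so $ii)$ is handled by the same splitting as the other three operators.

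The main step is a comparison with the Bessel operator on the half-line. Let $\widetilde P_t^\nu(x,y)$ denote the Poisson kernel of the semigroup generated by $-\sqrt{B_\nu}$ on $(0,\infty)$ (the Hankel setting), for which the variation, oscillation, short-variation and jump operators associated with $\{t^\beta\partial_t^\beta\widetilde P_t^\nu\}_{t>0}$ are known to be bounded on $L^p((0,\infty),x^{2\nu+1}dx)$, $1<p<\infty$, and of weak type $(1,1)$. Extending $f$ by zero and restricting back to $(0,1)$, these bounds transfer verbatim to the operators built from $\widetilde P_t^\nu$ on $(0,1)^2$. It then remains to control the remainder family with kernel
\[
R_t^\nu(x,y):=P_t^\nu(x,y)-\widetilde P_t^\nu(x,y),\qquad x,y\in(0,1),\ t>0,
\]
which should no longer be singular along the diagonal, the diagonal singularities of the two Poisson kernels cancelling.

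For the remainder, each of $\mathcal{V}_\rho$, $\mathcal{O}$, $\mathcal{S}_V$ and $\lambda[\Lambda(\cdot,\lambda)]^{1/p}$ applied to the corresponding integral operator is dominated pointwise by $\int_0^1\big(\int_0^\infty|\partial_t(t^\beta\partial_t^\beta R_t^\nu(x,y))|\,dt\big)|f(y)|\,y^{2\nu+1}dy$. Hence it is enough to establish
\[
\sup_{x\in(0,1)}\int_0^1\Big(\int_0^\infty\big|\partial_t\big(t^\beta\partial_t^\beta R_t^\nu(x,y)\big)\big|\,dt\Big)y^{2\nu+1}\,dy<\infty
\]
together with the symmetric estimate in $y$; Schur's test then yields boundedness of the remainder on every $L^p((0,1),x^{2\nu+1}dx)$, $1\le p\le\infty$, in particular weak type $(1,1)$. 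To bound the inner integral I would use the subordination formula \eqref{eq:subordination}, transferring $\partial_t^\beta$ and the extra $\partial_t$ onto the explicit factor $t\,u^{-3/2}e^{-t^2/(4u)}$, thereby reducing everything to pointwise control of the heat remainder $W_u^\nu(x,y)-\widetilde W_u^\nu(x,y)$ and of its behaviour as $u\to0^+$ and $u\to\infty$.

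The hard part is exactly this comparison of heat kernels on $(0,1)$. One must quantify how the Dirichlet condition at $x=1$ and the spectral nature of the Fourier--Bessel kernel (the sum over the zeros $\lambda_{n,\nu}$) deviate from the explicit Hankel heat kernel: near the diagonal and away from the endpoint the two kernels should agree up to an integrable error, while as $x,y\to1^-$ a boundary (reflection) term must be isolated and shown to contribute an integrable kernel after the $(\beta+1)$-fold $t$-differentiation and the $u$-integration. Controlling the interaction of the fractional Weyl derivative with the subordination integral, and making these estimates uniform up to the boundary, is the principal technical obstacle; the needed pointwise heat-kernel bounds rest on the asymptotics of Bessel functions and of their zeros, in the spirit of the kernel analysis behind the maximal bounds for $W_*^\nu$ and $P_*^\nu$ recalled above.
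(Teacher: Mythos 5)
Your reduction of $ii)$--$iv)$ to a single total-variation/vector-valued framework is reasonable, and comparing with the half-line (Hankel) kernel is a natural idea; but the step on which your whole argument rests is false. You claim it suffices to prove
\begin{equation*}
\sup_{x\in(0,1)}\int_0^1\Big(\int_0^\infty\big|\partial_t\big(t^\beta\partial_t^\beta R_t^\nu(x,y)\big)\big|\,dt\Big)y^{2\nu+1}\,dy<\infty ,
\end{equation*}
the diagonal singularities of the two Poisson kernels cancelling. They cancel along the diagonal away from the endpoint, but not near the corner $x,y\to 1^-$: the Fourier--Bessel kernel satisfies a Dirichlet condition at $1$ (since $\phi_n^\nu(1)=0$), which in the sharp bound \eqref{eq:2.1} appears as the damping factor $1\wedge\frac{(1-x)(1-y)}{t}$, absent for the half-line kernel. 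Consequently, in the regime $K(1-x)(1-y)\le u\le 1$, $|x-y|^2\lesssim u$, the heat-kernel difference is comparable to the free Gaussian $u^{-1/2}e^{-|x-y|^2/(4u)}$ itself (this is the image/reflection charge at the endpoint), and after subordination one gets, already for $\beta=0$ and $x$ near $1$,
\begin{equation*}
\int_0^\infty|\partial_t R_t^\nu(x,y)|\,dt\;\gtrsim\;\frac{1}{1-y},
\qquad K(1-x)\le 1-y\le \tfrac12 ,
\end{equation*}
because $R_t^\nu(x,y)\gtrsim (1-y)^{-1}$ at the scale $t\sim 1-y$ while $R_t^\nu(x,y)\to 0$ as $t\to\infty$. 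Hence your Schur quantity diverges like $\log\frac{1}{1-x}$ as $x\to1^-$: the total-variation kernel of the remainder is not integrable in $y$ uniformly in $x$, the majorant operator is not even bounded on $L^\infty$, and Schur's test cannot close the argument. The remainder keeps a genuine singular structure at the corner $(1,1)$ and would need Calder\'on--Zygmund or Hardy-type treatment there --- exactly the work your sketch defers to "the hard part". A second, related obstruction: the only global information available for $W_t^\nu$ is the two-sided equivalence \eqref{eq:2.1}, and an equivalence of kernels gives no control on their difference; your qualitative claim that the kernels agree up to an integrable error would require asymptotics with error terms that are neither cited nor proved.

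For contrast, the paper never compares with the half-line kernel in Theorem \ref{Th:1.1} (such a comparison is used only locally near the origin, and only for $t\in(0,1)$, in the proof of Theorem \ref{Th:1.2}). Instead it shows directly that $f\mapsto\{t^\beta\partial_t^\beta P_t^\nu f\}_{t>0}$ is an $E_\rho$-valued Calder\'on--Zygmund operator: the $L^2$ bound comes by transference to the semigroup $\mathbb{W}_t^\nu$ (conjugation by $\phi_1^\nu$ together with the spectral shift $e^{\lambda_{1,\nu}^2t}$), which \emph{is} a symmetric diffusion semigroup so that the Le Merdy--Xu variational estimates apply; and the kernel size and regularity bounds of Lemmas \ref{Prop:2.1} and \ref{Prop:2.2} are proved for the full kernel from \eqref{eq:2.1} and \eqref{eq:1.2}. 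Near the corner these give precisely the standard singularity $|x-y|^{-1}$, which is handled by local vector-valued CZ theory, while the off-diagonal regions are controlled by the Hardy operators $H_0^\nu$ and $H_\infty$. If you insist on a comparison approach, the corner term must be treated as an operator, exploiting cancellation rather than absolute values in $t$ --- at which point you will essentially have rebuilt the paper's Calder\'on--Zygmund argument.
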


Hardy spaces $H^1((0,1),\Delta_\nu)$, for Fourier-Bessel expansions, have been considered in \cite{BDL, DPRS}. 
For a function $f \in L^1((0,1),x^{2 \nu + 1}dx)$, in 
\cite{DPRS} it is said that
$$f \in H^1((0,1),\Delta_\nu) 
\quad \text{provided that} \quad
P_*^\nu(f) \in L^1((0,1),x^{2 \nu + 1}dx),$$
and in \cite[Theorem 4.26]{BDL} it is proven that
$$f \in H^1((0,1),\Delta_\nu) 
\quad \text{if and only if} \quad
W_*^\nu(f) \in L^1((0,1),x^{2 \nu + 1}dx).$$
Moreover, in \cite[Theorem A]{DPRS} it was established an atomic characterization for $H^1((0,1),\Delta_\nu)$, for $\nu>-1/2$.\\

We characterize $H^1((0,1),\Delta_\nu)$ by using the variation operators associated to $\{P_t^\nu\}_{t>0}$.

\begin{Th}\label{Th:1.2}
Let $\rho>2$ and {$\nu>-1$}. Then, the operator
$\mathcal{V}_\rho(\{  P_t^\nu\}_{t>0})$ is bounded from
$H^1((0,1),\Delta_\nu)$ into $L^1((0,1),x^{2 \nu + 1}dx)$.
Furthermore, a function 
$f \in L^1((0,1),$ $x^{2 \nu + 1}dx)$ is in $H^1((0,1),\Delta_\nu)$
if and only if
$\mathcal{V}_\rho(\{  P_t^\nu\}_{t>0})(f) \in L^1((0,1),x^{2 \nu + 1}dx)$ and the quantities
\begin{equation}\label{eq:Th1.2a}
\|f\|_{L^1((0,1),x^{2 \nu + 1}dx)}
+ \|P_*^\nu f\|_{L^1((0,1),x^{2 \nu + 1}dx)}
\end{equation}
and
\begin{equation}\label{eq:Th1.2b}
\|f\|_{L^1((0,1),x^{2 \nu + 1}dx)}
+ \|\mathcal{V}_\rho(\{  P_t^\nu\}_{t>0})(f)\|_{L^1((0,1),x^{2 \nu + 1}dx)}
\end{equation}
are equivalent.
\end{Th}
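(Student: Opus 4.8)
Our strategy is to prove the two norm inequalities between \eqref{eq:Th1.2a} and \eqref{eq:Th1.2b} separately; the equivalence and the $H^1$-boundedness statement then follow at once. Throughout I write $d\mu(x)=x^{2\nu+1}dx$ and $\|\cdot\|_p=\|\cdot\|_{L^p((0,1),d\mu)}$.

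\emph{The easy inequality} \eqref{eq:Th1.2a} $\lesssim$ \eqref{eq:Th1.2b}. First I would record that for $f\in L^1((0,1),d\mu)$ one has $P_s^\nu(f)(x)\to f(x)$ for almost every $x\in(0,1)$ as $s\to 0^+$: this is clear for finite linear combinations of the eigenfunctions $\phi_n^\nu$ (since $P_s^\nu\phi_n^\nu=e^{-s\lambda_{n,\nu}}\phi_n^\nu$), such combinations are dense in $L^1((0,1),d\mu)$ because $\mu$ is finite, and $P_*^\nu$ is of weak type $(1,1)$. Given $t>0$, choosing the decreasing sequence $\varepsilon_1=t>\varepsilon_2=s>\varepsilon_3=s/2>\cdots$ in the definition of $\mathcal{V}_\rho$ gives $|P_t^\nu(f)(x)-P_s^\nu(f)(x)|\le \mathcal{V}_\rho(\{P_t^\nu\}_{t>0})(f)(x)$, and letting $s\to 0^+$ we obtain the pointwise bound
\begin{equation*}
P_*^\nu(f)(x)\le |f(x)|+\mathcal{V}_\rho(\{P_t^\nu\}_{t>0})(f)(x),\qquad \text{a.e. } x\in(0,1).
\end{equation*}
Integrating yields \eqref{eq:Th1.2a} $\lesssim$ \eqref{eq:Th1.2b}; in particular $\mathcal{V}_\rho(\{P_t^\nu\}_{t>0})(f)\in L^1$ forces $f\in H^1((0,1),\Delta_\nu)$. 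This argument is valid for every $\nu>-1$.

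\emph{The main inequality} \eqref{eq:Th1.2b} $\lesssim$ \eqref{eq:Th1.2a}. Here the plan is to show that $\mathcal{V}_\rho(\{P_t^\nu\}_{t>0})$ is bounded from $H^1((0,1),\Delta_\nu)$ into $L^1((0,1),d\mu)$. I would regard $\mathcal{V}_\rho$ as the $F_\rho$-seminorm of the Banach-valued linear operator $f\mapsto (P_t^\nu(f))_{t>0}$, where $F_\rho$ denotes the space of complex functions on $(0,\infty)$ equipped with the $\rho$-variation seminorm; its kernel is $\mathbb{K}(x,y):=(P_t^\nu(x,y))_{t>0}\in F_\rho$. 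By Theorem \ref{Th:1.1} with $\beta=0$, this operator is bounded on $L^2((0,1),d\mu)$ and of weak type $(1,1)$. Using the atomic/molecular description of $H^1((0,1),\Delta_\nu)$ established in \cite{DPRS,BDL}, it then suffices to bound $\|\mathcal{V}_\rho(\{P_t^\nu\}_{t>0})(a)\|_1$ uniformly over atoms $a$.

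For an atom $a$ associated with an interval $I=(x_0-r,x_0+r)\cap(0,1)$, I would split $(0,1)=2I\cup(2I)^c$. On $2I$ the Cauchy--Schwarz inequality, the $L^2$-boundedness of $\mathcal{V}_\rho$, the normalization $\|a\|_2\le\mu(I)^{-1/2}$ and the doubling property of $\mu$ give a uniform bound. On $(2I)^c$ I would use the vanishing moment $\int a\,d\mu=0$ to write $P_t^\nu(a)(x)=\int_I(P_t^\nu(x,y)-P_t^\nu(x,x_0))a(y)\,d\mu(y)$ and then, by subadditivity of the seminorm, reduce matters to the H\"ormander-type estimate
\begin{equation*}
\int_{(2I)^c}\big\|\mathbb{K}(x,y)-\mathbb{K}(x,x_0)\big\|_{F_\rho}\,d\mu(x)\le C,\qquad y\in I.
\end{equation*}
The main obstacle is precisely this last estimate: it requires controlling the $\rho$-variation in $t$ of the differences $P_t^\nu(x,y)-P_t^\nu(x,x_0)$, i.e. quantities of the form $(\sum_j|\partial_y P_{\varepsilon_j}^\nu(x,\xi)\,(y-x_0)|^\rho)^{1/\rho}$, integrated against $d\mu(x)$ over $(2I)^c$. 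This rests on the fine asymptotics of the Fourier--Bessel Poisson kernel and its $y$-derivative, separating the local part (which behaves like a classical Calder\'on--Zygmund kernel on the space of homogeneous type $((0,1),|\cdot|,\mu)$) from the contributions near the endpoints $x=0$ and $x=1$; these are exactly the kernel bounds, now measured in the variation seminorm, that underlie the proof of Theorem \ref{Th:1.1}.
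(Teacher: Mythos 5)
Your ``easy'' inequality is correct (though the paper gets it more cheaply: $|P_t^\nu(f)-P_1^\nu(f)|\le\mathcal{V}_\rho(\{P_t^\nu\}_{t>0})(f)$ pointwise plus the $L^1$-boundedness of $P_1^\nu$, so no a.e.\ convergence argument is needed), and your overall plan for the main inequality --- atomic decomposition, $L^2$-boundedness from Theorem \ref{Th:1.1} on $2I$ --- matches Section \ref{S4}. But the core of your argument has a genuine gap, in two places. First, the atomic decomposition of $H^1((0,1),\Delta_\nu)$ from \cite{DPRS,BDL} is not the classical one for the space of homogeneous type $((0,1),|\cdot|,m_\nu)$: besides cancellative atoms it contains the atoms $a=m_\nu(I_j)^{-1}\chi_{I_j}$, $I_j=(1-2^{-j},1-2^{-j-1}]$, $j\in\NN\cup\{0\}$, which have \emph{no} vanishing moment and cannot be dispensed with (any $L^1$-convergent sum of cancellative atoms has vanishing $m_\nu$-integral, yet $\chi_{I_j}\in H^1((0,1),\Delta_\nu)$). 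Your scheme --- vanishing moment plus a H\"ormander condition on $(2I)^c$ --- says nothing about these. The paper treats them separately (Case I of Proposition \ref{Prop:4.1}) by a direct computation using the boundary decay encoded in \eqref{eq:2.1}, namely the factor $\bigl(1\wedge\frac{(1-x)(1-y)}{t}\bigr)e^{-\lambda_{1,\nu}^2 t}$ together with $1-y\sim 2^{-j}$ on $I_j$; the same issue reappears in Case III, where a cancellative atom not supported in any $I_j^*$ is cut into pieces that again lose the cancellation.

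Second, the estimate you yourself flag as the main obstacle, $\int_{(2I)^c}\|\mathbb{K}(x,y)-\mathbb{K}(x,x_0)\|_{F_\rho}\,dm_\nu(x)\lesssim 1$ uniformly in $y\in I$ and in $I$, does not follow from ``the kernel bounds that underlie the proof of Theorem \ref{Th:1.1}''. The only regularity bound proved there is Lemma \ref{Prop:2.2} (with $\beta=0$), $\|\partial_y H_\beta^\nu(x,y)\|_{E_\rho}\lesssim(xy)^{-\nu-1/2}|x-y|^{-2}$, and Section \ref{Sect2} uses it only in the local region $x/2<y<3x/2$; the global region is handled by size estimates and Hardy-type operators, with no cancellation gain. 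If you feed this gradient bound into your H\"ormander integral for an atom near the origin, say $I\subset(0,2r)$ with $x_0\sim r$, the region $x>4r$ contributes about $r^{1/2-\nu}\int_{4r}^1 x^{\nu-3/2}\,dx$, which is $O(1)$ only when $\nu<1/2$ and diverges as $r\to 0^+$ when $\nu\ge 1/2$. So this step fails as stated: one needs either genuinely finer control of $\partial_y P_t^\nu(x,y)$ for $y\ll x$, or a different argument. The paper chooses the latter: for atoms supported in $I_0^*$ (Case II.2) it splits the variation at $t=1$, estimates the part $t\ge 1$ directly from kernel decay, and for $t\le 1$ replaces $P_t^\nu$ by the subordinated Bessel semigroup $\mathfrak{P}_t^\nu$ on $(0,\infty)$ via the perturbation bound $|W_t^\nu(x,y)-\mathfrak{W}_t^\nu(x,y)|\lesssim t$ of \cite{DPRS}, and then invokes the known variation inequality for the Bessel Poisson semigroup, \cite[Theorem 1.3]{WYZ}, when $\nu>-1/2$ (proving the needed kernel estimate and following \cite{WYZ} when $-1<\nu\le-1/2$). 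Without the non-cancellative atoms and this transference to the half-line Bessel setting, your outline does not close.
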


It is also usual to study harmonic analysis associated with the Bessel type operator $\mathbb{S}_\nu$ defined by 
\begin{equation*}
\mathbb{S}_\nu
:=
- \frac{d^2}{dx^2}
+
\frac{\nu^2-1/4}{x^2} \quad
\text{in }
(0,1).
\end{equation*}
It is clear that
\begin{equation*}
\mathbb{S}_\nu
=
x^{\nu + 1/2} \Delta_\nu x^{-\nu-1/2}.
\end{equation*}
This relation allows us to transfer properties between $\Delta_\nu$ and $\mathbb{S}_\nu$ contexts.
However, the harmonic analysis related to 
$\Delta_\nu$ and $\mathbb{S}_\nu$ has important differences. For instance,
as it is shown in \cite{DPRS} (see also \cite{BDL})
the Hardy spaces 
$H^p((0,1),\Delta_\nu)$
and
$H^p((0,1),\mathbb{S}_\nu)$
have different properties.
Here we will prove that the variation operators defined by using semigroups associated with
$\Delta_\nu$ and $\mathbb{S}_\nu$ have different
$L^p$-boundedness properties.\\

For every $n \in \mathbb{N}$, we define
\begin{equation*}
\Psi_n^\nu(x)
:=
d_{n,\nu}
(\lambda_{n,\nu} x)^{1/2}
J_{\nu}(\lambda_{n,\nu} x), \quad x \in (0,1).
\end{equation*}
We have that 
\begin{equation*}
\mathbb{S}_\nu    
\Psi_n^\nu
=
\lambda_{n,\nu}^2
\Psi_n^\nu, \quad n \in \mathbb{N}.
\end{equation*}
The sequence
$\{\Psi_n^\nu\}_{n \in \mathbb{N}} $ 
is an orthonormal complete basis in 
$L^2((0,1),dx)$.
We define, for every 
$n \in \mathbb{N}$ and 
$f \in L^2((0,1),dx)$,
\begin{equation*}
a_n^\nu(f)
:= \int_0^1 
\Psi_n^\nu(x)
f(x)\, dx.
\end{equation*}
Moreover, consider the operator $S_\nu$ determined by
$$S_\nu f
:=
\sum_{n=1}^\infty 
\lambda_{n,\nu}^2 
a_n^\nu(f)
\Psi_n^\nu, 
\quad f \in D(S_\nu),$$
where
$$D(S_\nu)
:=
\Big\{ f \in L^2((0,1),dx)
\text{ : }
\sum_{n=1}^\infty 
|\lambda_{n,\nu}^2 
a_n^\nu(f)|^2 < \infty
\Big\}.$$
This operator $S_\nu$ is symmetric and positive in 
$L^2((0,1),dx)$. Note that when
$0<|\nu|<1$ the operators 
$\mathbb{S}_\nu$
and
$\mathbb{S}_{-\nu}$
are equal but the operators
${S}_\nu$
and
${S}_{-\nu}$
do not coincide.\\

Furthermore, $-S_\nu$ generates the semigroup of operators 
$\{\mathcal{W}_t^\nu\}_{t>0}$ in 
$L^2((0,1),dx)$, where
$$
\mathcal{W}_t^\nu(f)
:=
\sum_{n=1}^\infty
e^{-t \lambda_{n,\nu}^2} a_n^\nu(f) \Psi_n^\nu,
\quad t>0 
\quad \text{and} \quad
f \in L^2((0,1),dx).
$$
We can write,
for every $t>0$ and $f \in L^2((0,1),dx)$, 
\begin{equation*}
\mathcal{W}_t^\nu(f)
=
\int_0^1 \mathcal{W}_t^\nu(x,y) f(y) \, dy, \quad x\in (0,1),
\end{equation*}
with
$$
\mathcal{W}_t^\nu(x,y)
:=
\sum_{n=1}^\infty
e^{-t \lambda_{n,\nu}^2} \Psi_n^\nu(x) \Psi_n^\nu(y), \quad x,y\in (0,1).
$$

Since
\begin{equation*}
\lim_{z \to 0}
\frac{J_\nu(z)}{z^\nu} 
=
\frac{1}{2^\nu \Gamma(\nu+1)}
\end{equation*}
and
\begin{equation*}
|\sqrt{z} J_\nu(z)|
\lesssim 1, \quad z \in (1,\infty),
\end{equation*}
see \cite[formula (5.16.1)]{Leb}, if $n \in \mathbb{N}$ and $1<p<\infty$,
$\Psi_n^\nu \in L^p((0,1),dx)$
if and only if $\nu >-1/p-1/2$.
According to 
\cite[pp. 1339-1340]{NR1}
the maximal operator 
$\mathcal{W}_*^\nu$ defined by
\begin{equation*}
\mathcal{W}_*^\nu(f)
:= \sup_{t>0} |\mathcal{W}_t^\nu(f)|,
\end{equation*}
is 
\begin{itemize}
\item bounded from $L^p((0,1),dx)$ into itself when $1<p<\infty$ and
$-\nu-1/2<1/p<\nu + 3/2$; \\
\item of restricted weak type $(p,p)$ when $1<p<\infty$, 
$-1< \nu {<} -1/2$ and $p=1/(\nu+3/2)$;\\
\item of weak type $(1,1)$ when $\nu {\geq}-1/2$.\\
\end{itemize} 

The Poisson semigroup 
$\{\mathcal{P}_t^\nu\}_{t>0}$ generated by $-\sqrt{S_\nu}$ is defined by
\begin{equation*}
\mathcal{P}_t^\nu(f)
:=
\sum_{n=1}^\infty e^{-\lambda_{n,\nu}t}
a_n^\nu(f) \Psi_n^\nu, \quad
f \in L^2((0,1),dx) \quad \text{and} \quad t>0.
\end{equation*}
Also, by using subordination formula we can write
\begin{equation*}
\mathcal{P}_t^\nu(f)
=
\frac{t}{2 \sqrt{\pi}}
\int_0^\infty \frac{e^{-t^2/(4u)}}{u^{3/2}}
\mathcal{W}_u^\nu(f) \, du
, \quad
f \in L^2((0,1),dx) \quad \text{and} \quad t>0.
\end{equation*}
Then, $L^p$-boundedness properties of the maximal operator $\mathcal{P}_*^\nu$ defined by
\begin{equation*}
\mathcal{P}_*^\nu(f)
:= \sup_{t>0} |\mathcal{P}_t^\nu(f)|,
\end{equation*}
can be deduced as the corresponding properties of $\mathcal{W}_*^\nu$ that we have just stated above.\\

$L^p$-boundedness properties of variation, oscillation and jump operators for 
$\{t^\beta \partial_t^\beta \mathcal{P}_t^\nu\}_{t>0}$
are established in the following

\begin{Th}\label{Th:1.3}
Let $\beta \geq 0$, $\rho>2$, $\lambda>0$, $\nu > -1$
and $\{t_j\}_{j \in \NN}$ be a decreasing sequence in $(0,\infty)$.
Then, the operators
\begin{multicols}{2}
\begin{itemize}
\item[$i)$] $\mathcal{V}_\rho(\{t^\beta \partial_t^\beta \mathcal{P}_t^\nu\}_{t>0})$,\\
\item[$ii)$] $\lambda [\Lambda(\{t^\beta \partial_t^\beta \mathcal{P}_t^\nu\}_{t>0},\lambda)]^{1/p}$,
\end{itemize}
\begin{itemize}
\item[$iii)$] $\mathcal{O}(\{t^\beta \partial_t^\beta \mathcal{P}_t^\nu\}_{t>0},\{t_j\}_{j \in \NN})$, \\
\item[$iv)$] $\mathcal{S}_V(\{t^\beta \partial_t^\beta \mathcal{P}_t^\nu\}_{t>0})$,
\end{itemize}
\end{multicols}
are
\begin{itemize}
\item[$(a)$] of strong type $(p,p)$ when $1<p<\infty$ and 
$-\nu-1/2<1/p<\nu + 3/2$;\\
\item[$(b)$] of weak type $(1,1)$
when $\nu {\geq} -1/2$;\\
\item[$(c)$] of restricted weak type $(p,p)$ when $1<p<\infty$, 
$-1< \nu < -1/2$ and $p=-1/(\nu+1/2)$
or $p=1/(\nu+3/2)$.
\end{itemize}
\end{Th}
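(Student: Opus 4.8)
The plan is to exploit the conjugation relation $\mathbb{S}_\nu = x^{\nu+1/2}\Delta_\nu x^{-\nu-1/2}$ in order to transfer Theorem~\ref{Th:1.1} to the $\mathbb{S}_\nu$-setting, at the cost of a power weight. First I would record that the two families of eigenfunctions are related by $\Psi_n^\nu(x)=x^{\nu+1/2}\phi_n^\nu(x)$, $n\in\NN$, which is immediate from the definitions, so that the kernels satisfy $\mathcal{W}_t^\nu(x,y)=(xy)^{\nu+1/2}W_t^\nu(x,y)$ and $\mathcal{P}_t^\nu(x,y)=(xy)^{\nu+1/2}P_t^\nu(x,y)$. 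Consequently the multiplication map $U_\nu f:=x^{\nu+1/2}f$ is an isometry from $L^2((0,1),x^{2\nu+1}dx)$ onto $L^2((0,1),dx)$ carrying $\phi_n^\nu$ to $\Psi_n^\nu$; since it is independent of $t$ and sends the spectral resolution of $\Delta_\nu$ to that of $S_\nu$, it intertwines the Poisson semigroups, $\mathcal{P}_t^\nu=U_\nu P_t^\nu U_\nu^{-1}$, and commutes with $t^\beta\partial_t^\beta$. Thus for every $g$ and a.e. $x\in(0,1)$,
\begin{equation*}
t^\beta\partial_t^\beta\mathcal{P}_t^\nu(g)(x)=x^{\nu+1/2}\,t^\beta\partial_t^\beta P_t^\nu\big(y^{-\nu-1/2}g\big)(x),\qquad t>0.
\end{equation*}
Because $x^{\nu+1/2}$ does not depend on $t$ and the four operators are built from differences of $T_{\varepsilon_j}(g)(x)$ at a fixed point $x$, it factors out of each of them; for the jump operator this is so after the harmless rescaling $\lambda\mapsto\lambda\,x^{-\nu-1/2}$ of the threshold. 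Writing $A$ for any of $\mathcal{V}_\rho$, $\mathcal{O}(\cdot,\{t_j\})$, $\sup_\lambda\lambda[\Lambda(\cdot,\lambda)]^{1/p}$ or $\mathcal{S}_V$, this gives
\begin{equation*}
A(\{t^\beta\partial_t^\beta\mathcal{P}_t^\nu\}_{t>0})(g)(x)=x^{\nu+1/2}\,A(\{t^\beta\partial_t^\beta P_t^\nu\}_{t>0})\big(y^{-\nu-1/2}g\big)(x).
\end{equation*}

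Next I would reduce the desired $L^p((0,1),dx)$ bounds to weighted $L^p((0,1),x^{2\nu+1}dx)$ bounds for the $\Delta_\nu$-operators. Setting $h:=y^{-\nu-1/2}g$, i.e. $g=x^{\nu+1/2}h$, the $L^p(dx)$ estimate for the left-hand side becomes
\begin{equation*}
\int_0^1\big|A(\{t^\beta\partial_t^\beta P_t^\nu\}_{t>0})(h)(x)\big|^p\,x^{(\nu+1/2)p}\,dx\lesssim\int_0^1|h(x)|^p\,x^{(\nu+1/2)p}\,dx,
\end{equation*}
that is, boundedness of the $\Delta_\nu$-operator on $L^p$ with respect to $x^{(\nu+1/2)p}dx$. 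Relative to the underlying measure $x^{2\nu+1}dx$ this is the power weight $\omega(x)=x^{(\nu+1/2)p-(2\nu+1)}$. On the space of homogeneous type $((0,1),x^{2\nu+1}dx)$, whose homogeneous dimension near the origin is $2\nu+2$, the Muckenhoupt condition $\omega\in A_p$ reads $-(2\nu+2)<(\nu+1/2)p-(2\nu+1)<(2\nu+2)(p-1)$, and a direct simplification shows this is equivalent to $-\nu-1/2<1/p<\nu+3/2$. Hence part~$(a)$ follows once the four $\Delta_\nu$-operators are known to be bounded on $L^p(\omega\,x^{2\nu+1}dx)$ for every $\omega\in A_p$.

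To obtain these weighted bounds I would revisit the proof of Theorem~\ref{Th:1.1}, in which the variation, oscillation, jump and short-variation operators are realized as (vector-valued) Calderón--Zygmund operators on $((0,1),x^{2\nu+1}dx)$, after splitting the kernel into a local part comparable to the corresponding operators for the Hankel/Bessel semigroup on $(0,\infty)$ and a global part with fast off-diagonal decay. This Calderón--Zygmund structure, together with the weighted Calderón--Zygmund theory on spaces of homogeneous type, upgrades the unweighted estimate of Theorem~\ref{Th:1.1} to the full $A_p$-weighted estimate, yielding $(a)$. For $(b)$, when $\nu\geq-1/2$ the relevant weight at $p=1$ is $x^{-\nu-1/2}$, which lies in $A_1$, and the weighted weak $(1,1)$ inequality for Calderón--Zygmund operators applies. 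For $(c)$, the two exponents $1/p=\nu+3/2$ and $1/p=-\nu-1/2$ are exactly the endpoints of the above $A_p$-range, where $\omega$ becomes $A_p$-critical; at these borderline powers the sharp estimate degenerates to restricted weak type $(p,p)$, which is recovered by testing on characteristic functions and interpolating with change of measure against the strong-type bounds of $(a)$ and the weak endpoint.

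The main obstacle is the weighted estimate underlying $(a)$: the transference is an exact pointwise identity and costs nothing, so the entire theorem hinges on promoting the unweighted bound of Theorem~\ref{Th:1.1} to power weights with the sharp $A_p$-range, and in particular on the delicate treatment of the two critical exponents in $(c)$, where only restricted weak type can hold. A secondary technical point is verifying that the factorization survives for the jump operator despite the $x$-dependent rescaling of the threshold, which is controlled by the uniformity in $\lambda$ of the jump inequality.
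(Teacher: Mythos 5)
Your first step coincides with the paper's: from $\Psi_n^\nu=x^{\nu+1/2}\phi_n^\nu$ one gets $\mathcal{P}_t^\nu(x,y)=(xy)^{\nu+1/2}P_t^\nu(x,y)$, hence the pointwise identity $A(\{t^\beta\partial_t^\beta\mathcal{P}_t^\nu\})(g)(x)=x^{\nu+1/2}A(\{t^\beta\partial_t^\beta P_t^\nu\})(y^{-\nu-1/2}g)(x)$, and your $A_p$ computation for the resulting power weight is correct. For part $(a)$ the transference is exact, because $L^p$ norms factor through multiplication operators; what remains there (weighted strong-type bounds for the $\Delta_\nu$-operators, with the local Calder\'on--Zygmund part and the Hardy-type global parts both treated with weights) is a genuine strengthening of Theorem \ref{Th:1.1} that you would still have to prove, but it is plausible. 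The genuine gap is in parts $(b)$ and $(c)$: weak-type and restricted weak-type inequalities do \emph{not} pass through the conjugation $g\mapsto x^{\nu+1/2}g$. Write $m(x)=x^{\nu+1/2}$, $h=m^{-1}g$ and $T=\mathcal{V}_\rho(\{t^\beta\partial_t^\beta P_t^\nu\}_{t>0})$. What $(b)$ requires is
\begin{equation*}
\big|\{x\in(0,1)\,:\,m(x)\,T(h)(x)>\lambda\}\big|\lesssim\frac1\lambda\,\|h\|_{L^1(m\,dx)},
\end{equation*}
whereas the $A_1$-weighted weak $(1,1)$ inequality for $T$ gives
\begin{equation*}
\int_{\{T(h)>\lambda\}}m(x)\,dx\lesssim\frac1\lambda\,\|h\|_{L^1(m\,dx)}.
\end{equation*}
These are different statements (in the first, $m$ sits inside the level set and the measure is Lebesgue; in the second, $m$ is the measure and the level set is that of $T(h)$), and the second does not imply the first: multiplication by $m$ is unbounded from $L^{1,\infty}(m\,dx)$ into $L^{1,\infty}(dx)$. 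Concretely, for large $\lambda$ choose $A_k\subseteq\{x:2^{-k-1}<m(x)\le 2^{-k}\}$ with $\int_{A_k}m\,dx=(\lambda 2^{k+1})^{-1}$, for $0\le k\le c_\nu\log_2\lambda$, and set $F=\sum_k\lambda 2^{k+1}\chi_{A_k}$; then $\sup_{s>0}s\int_{\{F>s\}}m\,dx\le 2$ while $\lambda\,|\{mF>\lambda\}|\gtrsim\log\lambda$. Such an $F$ need not arise as $T(h)$, and indeed the conclusion of the theorem is true, but this shows the implication you invoke is false as a general principle, so your derivation of $(b)$ collapses. Part $(c)$ suffers from the same defect plus two more: $y^{-\nu-1/2}\chi_E$ is not a characteristic function, so restricted weak type does not transfer through the conjugation either; and an endpoint restricted weak-type estimate cannot be produced ``by interpolating with change of measure against the strong-type bounds'', since interpolation only yields estimates in the interior of the range of exponents, never at its endpoints.

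The way to close the gap is the paper's route: absorb the power into the \emph{kernel} rather than the function, and never use weights. By Lemmas \ref{Prop:2.1} and \ref{Prop:2.2}, the kernel $\mathcal{H}_\beta^\nu(x,y)=(xy)^{\nu+1/2}H_\beta^\nu(x,y)$ satisfies $\|\mathcal{H}_\beta^\nu(x,y)\|_{E_\rho}\lesssim|x-y|^{-1}$ and $\|\partial_x\mathcal{H}_\beta^\nu(x,y)\|_{E_\rho}+\|\partial_y\mathcal{H}_\beta^\nu(x,y)\|_{E_\rho}\lesssim|x-y|^{-2}$ in the local region $x/2<y\le\min\{1,3x/2\}$, that is, standard Calder\'on--Zygmund estimates with respect to Lebesgue measure, while in the global region it is bounded by $y^{\nu+1/2}/x^{\nu+3/2}$ (for $y\le x/2$) and $x^{\nu+1/2}/y^{\nu+3/2}$ (for $y>\min\{1,3x/2\}$), which are exactly power-weighted Hardy kernels. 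Local Calder\'on--Zygmund theory on $((0,1),dx)$ handles the local part for all $1<p<\infty$ and at $p=1$, and the mapping properties of those Hardy operators, stated in \cite[Lemmas 1 and 2]{BHNV}, are precisely what produce the restricted range in $(a)$, the weak $(1,1)$ bound for $\nu\ge-1/2$ in $(b)$, and the restricted weak type at the two critical exponents in $(c)$; the endpoint behaviour comes from a direct analysis of the Hardy operators, not from any weighted or interpolation argument.
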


Hardy spaces associated to the operator $S_\nu$ were studied in 
\cite{BDL,DPRS}. Let 
$f \in L^1((0,1),dx)$. We say that
(\cite[\S 1.2]{DPRS})
\begin{equation*}
f \in H^1((0,1),S_\nu)
\quad \text{when} \quad
\mathcal{P}_*^\nu(f) \in L^1((0,1),dx).    
\end{equation*}
In \cite[Theorem 4.1.7]{BDL} it was proven that, for $\nu>-1/2$,
\begin{equation*}
f \in H^1((0,1),S_\nu)
\quad \text{if and only if} \quad \mathcal{W}_*^\nu(f) \in L^1((0,1),dx).   
\end{equation*}

In the following we characterize 
$H^1((0,1),S_\nu)$ by using variation operators associated with 
$\{ \mathcal{P}_t^\nu\}_{t>0}$.

\begin{Th}\label{Th:1.4}
Let $\rho>2$ and ${\nu>-1/2}$. Then, the operator
$\mathcal{V}_\rho(\{\mathcal{P}_t^\nu\}_{t>0})$ is bounded from
$H^1((0,1),S_\nu)$ into $L^1((0,1),dx)$.
Furthermore, if 
$f \in L^1((0,1),dx)$ then 
$f \in H^1((0,1),S_\nu)$
if and only if
$\mathcal{V}_\rho(\{  \mathcal{P}_t^\nu\}_{t>0}) \in L^1((0,1),dx)$ and the quantities
$$
\|f\|_{L^1((0,1),dx)}
+ \|\mathcal{P}_*^\nu f\|_{L^1((0,1),dx)}
$$
and
$$
\|f\|_{L^1((0,1),dx)}
+ \|\mathcal{V}_\rho(\{  \mathcal{P}_t^\nu\}_{t>0})(f)\|_{L^1((0,1),dx)}
$$
are equivalent.
\end{Th}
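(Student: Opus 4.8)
The plan is to reduce Theorem~\ref{Th:1.4} to a single non-trivial statement, namely that $\mathcal{V}_\rho(\{\mathcal{P}_t^\nu\}_{t>0})$ maps $H^1((0,1),S_\nu)$ boundedly into $L^1((0,1),dx)$; granting this, the whole characterisation follows by combining it with an elementary pointwise domination and the very definition of $H^1((0,1),S_\nu)$ through $\mathcal{P}_*^\nu$.

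First I would prove the pointwise inequality
\[
\mathcal{P}_*^\nu(f)(x)\;\leq\;\mathcal{V}_\rho(\{\mathcal{P}_t^\nu\}_{t>0})(f)(x),\qquad x\in(0,1).
\]
Because $\lambda_{1,\nu}>0$ is the smallest eigenvalue, $\mathcal{P}_s^\nu(f)\to 0$ as $s\to\infty$ (in $L^2$, hence a.e.\ along a subsequence). For fixed $t>0$ and any $s>t$, choosing a decreasing sequence that contains $s$ and $t$ as consecutive terms shows $|\mathcal{P}_t^\nu(f)(x)-\mathcal{P}_s^\nu(f)(x)|\leq \mathcal{V}_\rho(\{\mathcal{P}_t^\nu\}_{t>0})(f)(x)$; letting $s\to\infty$ and taking the supremum over $t$ gives the claim. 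In particular $\|\mathcal{P}_*^\nu f\|_{L^1(dx)}\leq\|\mathcal{V}_\rho(\{\mathcal{P}_t^\nu\}_{t>0})(f)\|_{L^1(dx)}$, so finiteness of the $\mathcal{V}_\rho$-norm forces $f\in H^1((0,1),S_\nu)$ and the second quantity in the statement controls the first.

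For the converse I would use the atomic description of $H^1((0,1),S_\nu)$, available precisely for $\nu>-1/2$ (see \cite{DPRS,BDL})---which is exactly the hypothesis---together with the subadditivity of the $\rho$-variation seminorm, so that it suffices to bound $\|\mathcal{V}_\rho(\{\mathcal{P}_t^\nu\}_{t>0})(a)\|_{L^1(dx)}$ uniformly over atoms $a$. For an atom supported on an interval $I$ with $\int_I a\,dx=0$ and $\|a\|_\infty\lesssim|I|^{-1}$, I split $\int_0^1=\int_{2I}+\int_{(2I)^c}$. On $2I$ I use H\"older's inequality and the $L^2((0,1),dx)$-boundedness of $\mathcal{V}_\rho(\{\mathcal{P}_t^\nu\}_{t>0})$, which is the case $\beta=0$, $p=2$ of Theorem~\ref{Th:1.3}$(a)$ (the condition $-\nu-1/2<1/2<\nu+3/2$ holds for all $\nu>-1$); this yields a bound independent of the atom. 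On $(2I)^c$ I use $\int_I a\,dx=0$ to write $\mathcal{P}_t^\nu(a)(x)=\int_I\big(\mathcal{P}_t^\nu(x,y)-\mathcal{P}_t^\nu(x,y_I)\big)a(y)\,dy$, with $y_I$ the centre of $I$, and estimate the variation by $\int_I\big\|\{\mathcal{P}_t^\nu(x,y)-\mathcal{P}_t^\nu(x,y_I)\}_{t>0}\big\|_{V^\rho}\,|a(y)|\,dy$.

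The hard part will be the H\"ormander-type smoothness estimate for the Poisson kernel measured in the variation norm,
\[
\big\|\{\partial_y\mathcal{P}_t^\nu(x,y)\}_{t>0}\big\|_{V^\rho}\;\lesssim\;\frac{1}{|x-y|^{2}},\qquad x\neq y,
\]
whose integration in $x$ over $(2I)^c$, combined with $|y-y_I|\lesssim|I|$ and $\|a\|_\infty|I|\lesssim1$, produces the required uniform constant. This is the vector-valued kernel bound underlying Theorem~\ref{Th:1.3}, where $\mathcal{V}_\rho(\{\mathcal{P}_t^\nu\}_{t>0})$ is realised as a Calder\'on--Zygmund operator with values in the variation space; I would reuse those estimates, obtained by comparing $\mathcal{P}_t^\nu(x,y)$ with the Poisson--Bessel kernel on $(0,\infty)$ and controlling the remainder, now differentiating the subordination formula for $\mathcal{P}_t^\nu$ in $y$ and taking the $V^\rho$-norm in $t$. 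Possible boundary atoms in the decomposition are treated analogously, invoking the $L^p((0,1),dx)$-boundedness from Theorem~\ref{Th:1.3}. Assembling the local and global estimates gives the uniform atomic bound, hence the boundedness $H^1((0,1),S_\nu)\to L^1((0,1),dx)$; together with the first step this yields $\|\mathcal{V}_\rho(\{\mathcal{P}_t^\nu\}_{t>0})(f)\|_{L^1(dx)}\lesssim\|f\|_{L^1(dx)}+\|\mathcal{P}_*^\nu f\|_{L^1(dx)}$ and the equivalence of the two quantities. I stress that, unlike in Theorem~\ref{Th:1.2}, one cannot transfer the result through the unitary $f\mapsto x^{\nu+1/2}f$: this map is not an $L^1$-isometry between the two weighted spaces and does not send $S_\nu$-atoms to $\Delta_\nu$-atoms, so the argument must be carried out directly in the $S_\nu$ setting.
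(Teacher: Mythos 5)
Your skeleton (atomic decomposition for $\nu>-1/2$, uniform $L^1$ bounds on atoms, and a pointwise domination of $\mathcal{P}_*^\nu$ by the variation to close the equivalence) coincides with the paper's, and your way of getting $\mathcal{P}_*^\nu(f)\le\mathcal{V}_\rho(\{\mathcal{P}_t^\nu\}_{t>0})(f)$ by letting $s\to\infty$ is a harmless variant of the paper's comparison with $|\mathcal{P}_1^\nu(f)|$ (though for $f\in L^1$ you should justify $\mathcal{P}_s^\nu(f)(x)\to0$ via the kernel decay in \eqref{eq:2.1} and subordination, not via $L^2$ spectral theory). The genuine gap is the atoms of type (b), $a=m(I_j)^{-1}\chi_{I_j}$ with $j\in\ZZ\setminus\{0\}$: they carry \emph{no cancellation}, so your Calder\'on--Zygmund step of subtracting $\mathcal{P}_t^\nu(x,y_I)$ is unavailable for them, and ``invoking the $L^p((0,1),dx)$-boundedness of Theorem \ref{Th:1.3}'' cannot close the estimate: H\"older over the whole interval gives only $\|\mathcal{V}_\rho(\{\mathcal{P}_t^\nu\}_{t>0})(a)\|_{L^1((0,1),dx)}\lesssim\|a\|_{L^2((0,1),dx)}\sim m(I_j)^{-1/2}$, which is unbounded in $j$. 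The $L^2$ argument controls only an enlargement $3I_j$ of the support; on $(0,1)\setminus(3I_j)$ the uniform bound must come from the boundary decay of the kernel itself: for $j\ge 1$ from the factor $1\wedge\frac{(1-x)(1-y)}{u}$ in \eqref{eq:2.1} (which supplies the compensating smallness $1-y\sim 2^{-j}$), and for $j\le -1$ from the factor $(xy)^{\nu+1/2}$, whose integration converges only because $\nu>-1/2$ (this is the computation \eqref{eq:Prop5.1_3Ijc} in the paper). The same mechanism is needed for atoms whose support sits in no $I_j^*$, which the paper decomposes into non-cancellative pieces supported in the $I_j^*$'s. Without this ingredient the uniform atomic bound, hence the $H^1\to L^1$ boundedness, is not established; note also that this is a second place, beyond the atomic characterization, where the hypothesis $\nu>-1/2$ is really used.

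A second problem is your key lemma itself: the global H\"ormander estimate $\|\{\partial_y\mathcal{P}_t^\nu(x,y)\}_{t>0}\|_{V^\rho}\lesssim|x-y|^{-2}$ for all $x\neq y$ fails when $-1/2<\nu<1/2$. Since $\mathcal{P}_t^\nu(x,y)=(xy)^{\nu+1/2}P_t^\nu(x,y)$, differentiating the weight produces the term $(\nu+\tfrac12)\,x^{\nu+1/2}y^{\nu-1/2}\,\|H_\beta^\nu(x,y)\|_{E_\rho}$ (with $\beta=0$), and for $0<y\le x/2$ this is of size $y^{\nu-1/2}x^{-\nu-3/2}$, since the bound $\|H_\beta^\nu(x,y)\|_{E_\rho}\lesssim x^{-2(\nu+1)}$ of Lemma \ref{Prop:2.1} is of the correct order there by the two-sided estimate \eqref{eq:2.1}; this exceeds $|x-y|^{-2}\sim x^{-2}$ by the factor $(x/y)^{1/2-\nu}\to\infty$ as $y\to0$. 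So the smoothness bound is only usable in the local region $x/2<y<3x/2$, and the far contributions of an atom must be handled with region-adapted estimates, as the paper does: near the support, $|\partial_y\mathcal{W}_t^\nu(x,y)|\lesssim(\sqrt{t}/y+1)\,t^{-1}e^{-c|x-y|^2/t}$ (obtained from \eqref{eq:2.1} and \eqref{eq:1.2}), and far away the $(xy)^{\nu+1/2}$ size decay with no recourse to cancellation. The repair works precisely because $y^{\nu-1/2}$ is integrable at $0$ when $\nu>-1/2$, but it amounts to the case analysis by position of $I$ relative to the intervals $I_j$ that your outline was designed to avoid.
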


The paper is organized as follows. 
In Sections \ref{Sect2} and \ref{Sect3} we prove Theorems \ref{Th:1.1} and \ref{Th:1.3}, respectively. 
Theorems \ref{Th:1.2} and \ref{Th:1.4} are proven in Sections \ref{S4} and \ref{Sect5}.\\

Throughout this article $C$ and $c$ always denote positive constants that can change in each occurrence. We also write $a\lesssim b$ as shorthand for $a\leq Cb$ and moreover will use the notation $a\sim b$ if $a\lesssim b$ and $b\lesssim a$.

\section{Proof of Theorem \ref{Th:1.1}}
\label{Sect2}

\subsection{The variation operator 
$\mathcal{V}_\rho(\{t^\beta \partial_t^\beta P_t^\nu\}_{t>0})$}
\quad \\

Suppose that $f(x)=1$, $x \in (0,\infty)$. By using \cite[Proposition 2.3]{Stem} we deduce that the series
\begin{equation*}
\sum_{n=1}^\infty e^{-\lambda_{n,\nu}t}
\phi_n^\nu(x)c_n^\nu(f)
\end{equation*}
converges uniformly in $(0,1)$ and it can be extended to $[0,1]$ as a continuous function, for every $t>0$. 
Furthermore, since $\phi_n^\nu(1)=0$, $n \in \mathbb{N}$, we deduce that $P_t^\nu(f) \neq f$, $t>0$. Hence the semigroup of operators $\{P_t^\nu\}_{t>0}$ is not Markovian.\\

For every $t>0$, $P_t^\nu$ is selfadjoint in $L^2((0,1),x^{2\nu+1}dx)$.
Then, according to \cite[III.2]{Yo},
$\{P_t^\nu\}_{t>0}$ is a bounded analytic semigroup on $L^2((0,1),x^{2\nu+1}dx)$.
At this moment, it is not clear for us if, for every $t>0$, $P_t^\nu$ is an absolute contraction (see \cite[(6.1)]{LeMX}).\\

Thus, $\{P_t^\nu\}_{t>0}$ is not a symmetric diffusion semigroup and we do not know if 
\cite[Corollary 6.1]{LeMX} can be applied to $\{P_t^\nu\}_{t>0}$. 
The same arguments are valid for the semigroup $\{W_t^\nu\}_{t>0}$ of operators.
Note that \cite[Corollary 6.1]{LeMX}
is concerned with the strong type $(p,p)$ with $1<p<\infty$ for the variation operator 
$\mathcal{V}_\rho(\{t^m \partial_t^m T_t\}_{t>0})$ where $m \in \mathbb{N}$ and $\{T_t\}_{t>0}$ is a bounded analytic semigroup of operators and each $T_t$ is an absolute contraction.\\
\begin{Rem}\label{derivada}
Let $\beta\ge 0$, $m \in \NN$ such that $m-1 \leq \beta < m$ and  $f\in L^p((0,1), x^{2\nu+1}dx)$.
Then,  we have that
\begin{equation*}
 \partial_t^\beta P_t^\nu(f)(x)
=
\frac{-1}{\Gamma(m-\beta)}
\int_0^\infty \partial_t^m P_{t+s}^\nu(f)(x)
s^{m-\beta-1} \, ds,   \quad x\in (0,1),\:\;  t>0.
\end{equation*}
Let $x\in (0,1)$. By using the subordination formula we can write
\begin{align}\label{eq:I1}
\partial_t^m P_{t+s}^\nu(f)(x)    
&=
\partial_t^m 
\Big[
\frac{t+s}{2\sqrt{\pi}} \int_0^\infty 
\frac{e^{-(t+s)^2/(4u)}}{u^{3/2}} W_u^\nu(f)(x) \, du
\Big] \nonumber \\
& = 
\frac{1}{\sqrt{\pi}}
\int_0^\infty 
\partial_t^{m+1} 
\Big[-
e^{-(t+s)^2/(4u)}
\Big]
\frac{W_u^\nu(f)(x)}{\sqrt{u}} \, du,
 \quad t,s>0.
\end{align}
Indeed, by \cite[Lemma 3]{BCCFR}, for every $k \in \NN$, we get
\begin{equation*}
\Big| 
\partial_t^k [e^{-(t+s)^2/(4u)}]
\Big|
\lesssim
\frac{e^{-c(t+s)^2/u}}{u^{k/2}}, \quad
s,t,u>0,
\end{equation*}
and according to \cite[Theorem 1]{MSZ} (see also 
 \cite[Theorem A]{NR2}), for $x,y \in (0,1)$ and $t>0$,
\begin{equation}\label{eq:2.1}
W_t^\nu(x,y)
\sim
\frac{(1+t)^{\nu+2}}{(t+xy)^{\nu+1/2}}
\Big( 1 \wedge 
\frac{(1-x)(1-y)}{t}\Big)
\frac{1}{\sqrt{t}}
\exp\Big( - \frac{|x-y|^2}{4t} - \lambda_{1,\nu}^2 t\Big),
\end{equation}
where $a \land b := \min\{a,b\}$, $a, b \in \mathbb{R}$.
Hence,  for $\nu>-1$  and $t>M$,
\begin{align*}
& \int_0^\infty\int_0^1 \frac{|\partial_t^{m+1}e^{-t^2/(4u)}|}{u^{1/2}}W_u^\nu (x,y)|f(y)|y^{2\nu+1}dy\;du \\
& \quad \lesssim 
\int_0^\infty \int_0^1
\frac{e^{-ct^2/u}}{u^{(m+2)/2}}
W_u^\nu(x,y) \, du |f(y)| y^{2\nu+1} \, dy\\
&\quad \lesssim 
\int_0^\infty \int_0^1 \frac{e^{-ct^2/u}}{u^{(m+2)/2}}\frac{(1+u)^{\nu+2}}{(u+xy)^{\nu+1/2}}
\Big( 1 \wedge 
\frac{(1-x)(1-y)}{u}\Big)
\frac{1}{\sqrt{u}}
\exp\Big( - \frac{|x-y|^2}{4u} - \lambda_{1,\nu}^2 u\Big) \, du\\
& \qquad \times |f(y)|y^{2\nu+1}dy\\
& \quad \lesssim
\left\{
\begin{array}{ll}
   \displaystyle \int_0^\infty \frac{e^{-ct^2/u}}{u^{(m+3)/2+\nu+1/2}}{(1+u)^{\nu+2}}
e^{- \lambda_{1,\nu}^2 u}{du}\int_0^1|f(y)|y^{2\nu+1}dy, & \nu\ge -1/2  \\
& \\
 \displaystyle\int_0^\infty \frac{e^{-ct^2/u}}{u^{(m+3)/2}}{(1+u)^{\nu+2}}e^{- \lambda_{1,\nu}^2 u} & \\
 \qquad \times \displaystyle \int_0^1{(u^{-\nu-1/2}+(xy)^{-\nu-1/2})}
|f(y)|y^{2\nu+1}dy\;du,&    -1<\nu<-1/2
\end{array}
\right.\\
&\quad \lesssim
\left\{
\begin{array}{ll}
   \displaystyle \int_0^\infty \frac{e^{-cM^2/u}}{u^{(m+3)/2+\nu+1/2}}{(1+u)^{\nu+2}}
e^{- \lambda_{1,\nu}^2 u}{du}\|f\|_{L^p((0,1), x^{2\nu+1}dx)}, & \nu\ge -1/2  \\
& \\
 \displaystyle\int_0^\infty \frac{e^{-cM^2/u}}{u^{(m+3)/2}}{(1+u)^{\nu+2}}(u^{-\nu-1/2}+1)e^{- \lambda_{1,\nu}^2 u} \, du \|f\|_{L^p((0,1), x^{2\nu+1}dx)},   & -1<\nu<-1/2
\end{array}
\right.
\\
& \quad <\infty.
\end{align*}
Therefore, the derivation under the integral sign in \eqref{eq:I1}
is justified. 
Moreover,  Dominated Convergence Theorem gives that
$$
\lim_{t\to\infty} \partial_t^\beta (P_t^\nu (f)(x))=0.
$$
Therefore, if $\beta\in\NN_0,$
$$
 \partial_t^\beta (P_t^\nu (f)(x))=(P_t^\nu(f)(x))^{(\beta)}, \:\:t>0.
$$
\end{Rem}

Next lemma establishes a useful pointwise connection between the variation operators associated to 
$\{t^\beta \partial_t^\beta P_t^\nu\}_{t>0}$
and
$\{W_t^\nu\}_{t>0}$.

\begin{Lem}\label{Lem:2.4}
Let $\nu>-1$, $\rho>2$, $\beta \geq 0$ and $f\in L^p((0,1), x^{2\nu+1}dx)$. 
Then,
\begin{align}\label{eq:I2}
\mathcal{V}_\rho(\{t^\beta \partial_t^\beta P_t^\nu\}_{t>0})(f)(x)
& \lesssim
\mathcal{V}_\rho(\{W_t^\nu\}_{t>0})(f)(x), 
\quad x \in (0,1).
\end{align}
\end{Lem}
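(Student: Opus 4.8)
The plan is to realise the family $\{t^\beta\partial_t^\beta P_t^\nu\}_{t>0}$ as an average of the heat semigroup $\{W_u^\nu\}_{u>0}$ against a purely scalar kernel, and then transfer the $\rho$-variation estimate by Minkowski's integral inequality. Write $p_t(u):=\frac{t}{2\sqrt\pi}\frac{e^{-t^2/(4u)}}{u^{3/2}}$ for the subordination kernel, so that, by \eqref{eq:subordination}, $P_t^\nu(f)(x)=\int_0^\infty p_t(u)W_u^\nu(f)(x)\,du$. Arguing exactly as in Remark~\ref{derivada} to justify passing the (Weyl) derivative inside the integral, I would first record
$$t^\beta\partial_t^\beta P_t^\nu(f)(x)=\int_0^\infty g_t(u)\,W_u^\nu(f)(x)\,du,\qquad g_t(u):=t^\beta\partial_t^\beta p_t(u).$$

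Next I would exploit the self-similarity of $p_t$. Setting $q(\tau):=\frac{1}{2\sqrt\pi}\tau e^{-\tau^2/4}$ one has $p_t(u)=\frac1u\,q\big(t/\sqrt u\big)$, whence $\partial_t^m p_t(u)=u^{-1-m/2}q^{(m)}(t/\sqrt u)$; inserting this into the definition of $\mathbb{D}^\beta$ and changing variables $s=\sqrt u\,\sigma$ in the Weyl integral collapses the $u$-dependence and gives
$$g_t(u)=\frac1u\,\Psi_\beta\big(t/\sqrt u\big),\qquad \Psi_\beta(\tau):=\tau^\beta\,(\mathbb{D}^\beta q)(\tau).$$
The substitution $r=t/\sqrt u$, under which $du/u=-2\,dr/r$, then yields the representation
$$t^\beta\partial_t^\beta P_t^\nu(f)(x)=2\int_0^\infty \Psi_\beta(r)\,W_{t^2/r^2}^\nu(f)(x)\,\frac{dr}{r},\qquad x\in(0,1).$$

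With this representation the estimate is nearly automatic. Fixing $x\in(0,1)$, writing $H_t:=t^\beta\partial_t^\beta P_t^\nu(f)(x)$ and taking any decreasing sequence $\{\varepsilon_j\}_{j\in\NN}\subset(0,\infty)$, I would observe that for each fixed $r>0$ the sequence $\{\varepsilon_j^2/r^2\}_{j\in\NN}$ is again decreasing; hence Minkowski's inequality in $\ell^\rho$ gives
$$\Big(\sum_{j=1}^\infty|H_{\varepsilon_j}-H_{\varepsilon_{j+1}}|^\rho\Big)^{1/\rho}\le 2\int_0^\infty|\Psi_\beta(r)|\Big(\sum_{j=1}^\infty\big|W_{\varepsilon_j^2/r^2}^\nu(f)(x)-W_{\varepsilon_{j+1}^2/r^2}^\nu(f)(x)\big|^\rho\Big)^{1/\rho}\frac{dr}{r}\le 2\Big(\int_0^\infty|\Psi_\beta(r)|\frac{dr}{r}\Big)\mathcal{V}_\rho(\{W_u^\nu\}_{u>0})(f)(x).$$
Taking the supremum over all decreasing sequences $\{\varepsilon_j\}$ then yields \eqref{eq:I2}.

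The main obstacle is thus reduced to checking that the constant $\int_0^\infty|\Psi_\beta(r)|\,dr/r=\int_0^\infty r^{\beta-1}|(\mathbb{D}^\beta q)(r)|\,dr$ is finite, together with the interchange justification invoked above. Since $q$ and all of its derivatives decay like a Gaussian, the Weyl integral $\mathbb{D}^\beta q(r)=\frac{-1}{\Gamma(m-\beta)}\int_0^\infty q^{(m)}(r+\sigma)\sigma^{m-\beta-1}\,d\sigma$ converges and stays bounded as $r\to0^+$ (because $m-\beta-1>-1$) and decays exponentially as $r\to\infty$; consequently $r^{\beta-1}|(\mathbb{D}^\beta q)(r)|$ is integrable at infinity for every $\beta\ge0$ and at the origin whenever $\beta>0$. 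For the borderline case $\beta=0$ one has $\Psi_0(r)/r=q(r)/r=\frac{1}{2\sqrt\pi}e^{-r^2/4}$, which is integrable on $(0,\infty)$, so the constant is finite in all cases and the argument closes.
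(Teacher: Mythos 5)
Your proof is correct, and it rests on the same basic strategy as the paper's proof of Lemma \ref{Lem:2.4} --- subordination, an integral representation of $t^\beta \partial_t^\beta P_t^\nu(f)(x)$ as an average of $W^\nu$ evaluated at times that are increasing functions of $t$, then Minkowski's inequality in $\ell^\rho$ followed by the supremum over decreasing sequences --- but your derivation of that representation takes a genuinely different and cleaner route. The paper expands $\partial_t^{m+1}[e^{-(t+s)^2/(4u)}]$ via Fa\`a di Bruno's formula, obtaining a finite sum of double integrals $I_{k_1,k_2}$, and then performs the changes of variables $v=(t+s)^2/(4u)$ and $s=tz$ so that the heat parameter becomes $t^2(1+z)^2/(4v)$; finiteness of the resulting constant then needs the bookkeeping $k_1+k_2-3/2\geq m/2-1>-1$ together with the integrability of $z^{m-\beta-1}(1+z)^{-m}$ (which, as in your argument, forces the case $\beta=0$ to be treated separately). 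You instead observe that the subordination kernel is self-similar, $p_t(u)=u^{-1}q(t/\sqrt{u})$, and that this scaling commutes with the Weyl derivative after rescaling the $s$-variable, collapsing everything into a single integral against the profile $\Psi_\beta(r)=r^\beta(\mathbb{D}^\beta q)(r)$; the constant is then finite simply because $q$ and all its derivatives decay like Gaussians, so no Fa\`a di Bruno bookkeeping is needed and all $\beta\geq 0$ are handled in one framework. One small point you should make explicit: the identity $t^\beta\partial_t^\beta P_t^\nu(f)(x)=\int_0^\infty g_t(u)\,W_u^\nu(f)(x)\,du$ requires not only differentiation under the integral sign (as in Remark \ref{derivada}) but also a Fubini interchange of the Weyl $s$-integral with the subordination $u$-integral. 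This is harmless: since $|\partial_t^m p_{t+s}(u)|\lesssim u^{-1-m/2}e^{-c(t+s)^2/u}$ and $(t+s)^2\geq t^2+s^2$, the absolute convergence needed reduces, after integrating in $s$, to $\int_0^\infty u^{-1-\beta/2}e^{-ct^2/u}|W_u^\nu(f)(x)|\,du<\infty$, which is exactly the type of estimate verified in Remark \ref{derivada} --- but it deserves a sentence.
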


\begin{proof}
Fix $x \in (0,1)$.
First of all, if $\beta=0$, the estimate \eqref{eq:I2} is an straightforward consequence of the subordination formula \eqref{eq:subordination}. 

For $t,s,u>0$, Fa di Bruno's formula leads to
\begin{align*}
\partial_t^{m+1} [e^{-(t+s)^2/(4u)}]    
&=
\sum_{k_1+2k_2=m+1}
\frac{(m+1)!(-1)^{k_1+k_2}}{k_1! k_2! 2!^{k_2}}
e^{-(t+s)^2/(4u)}
\Big( \frac{t+s}{2u}\Big)^{k_1}
\Big( \frac{1}{2u}\Big)^{k_2}.
\end{align*}
Therefore,
\begin{align*}
t^\beta \partial_t^\beta P_{t}^\nu(f)(x) 
& = 
\frac{-(m+1)!}{\Gamma(m-\beta) \sqrt{\pi}}
\sum_{k_1+2k_2=m+1}
\frac{(-1)^{k_1+k_2}}{k_1! k_2! 2!^{k_2}}
I_{k_1, k_2}(f)(t,x), \quad t>0,
\end{align*}
where, for every $k_1$, $k_2 \in \NN$ such that
$k_1+2k_2=m+1$ and  $t>0$,
\begin{equation*}
I_{k_1, k_2}(f)(t,x)
:=
t^\beta\int_0^\infty
s^{m-\beta-1}
\int_0^\infty 
\frac{e^{-(t+s)^2/(4u)}}{(2u)^{k_1+k_2} \sqrt{u}}
(t+s)^{k_1} W_u^\nu(f)(x) \, du \, ds.
\end{equation*}
Let $k_1$, $k_2 \in \NN$ such that
$k_1+2k_2=m+1$. The change of variables
$v=(t+s)^2/(4u)$ and $s=tz$, yield to
\begin{align*}
I_{k_1, k_2}(f)(t,x)
& =
2^{k_1+k_2-1} t^\beta
\int_0^\infty
\frac{s^{m-\beta-1}}{(t+s)^m}
\int_0^\infty e^{-v} v^{k_1+k_2-3/2}
W_{(t+s)^2/(4v)}^\nu (f)(x) \, dv \, ds \\
& =
2^{k_1+k_2-1}   
\int_0^\infty
\frac{z^{m-\beta-1}}{(1+z)^m}
\int_0^\infty e^{-v} v^{k_1+k_2-3/2}
W_{t^2(1+z)^2/(4v)}^\nu (f)(x) \, dv \, dz.
\end{align*}
For  $t>0$, we conclude
\begin{align}\label{eq:I3}
t^\beta \partial_t^\beta P_{t}^\nu(f)(x) 
& =
\frac{-(m+1)!}{\Gamma(m-\beta) \sqrt{\pi}}
\sum_{k_1+2k_2=m+1}
\frac{(-1)^{k_1+k_2}2^{k_!+k_2-1}}{k_1! k_2! 2!^{k_2}} \nonumber \\
& \qquad \times 
\int_0^\infty
\frac{z^{m-\beta-1}}{(1+z)^m}
\int_0^\infty e^{-v} v^{k_1+k_2-3/2}
W_{t^2(1+z)^2/(4v)}^\nu (f)(x) \, dv \, dz.
\end{align}
On the other hand, if $\{t_j\}_{j \in \mathbb{N}}$ is a decreasing sequence in $(0,\infty)$ and $n \in \NN$,
Minkowski's inequality gives us
\begin{align*}
& \Big( 
\sum_{j=1}^n
\Big| 
t^\beta \partial_t^\beta P_t^\nu(f)(x)_{|_{t=t_j}}
-
t^\beta \partial_t^\beta P_t^\nu(f)(x)_{|_{t=t_{j+1}}}
\Big|^\rho
\Big)^{1/\rho}    \\
& \qquad
\lesssim
\sum_{k_1+2k_2=m+1}
\int_0^\infty
\frac{z^{m-\beta-1}}{(1+z)^m}
\int_0^\infty e^{-v} v^{k_1+k_2-3/2} \\
& \hspace{3.5cm} \times 
\Big(
\sum_{j=1}^n
\Big|
W_{t_j^2(1+z)^2/(4v)}^\nu (f)(x) 
-
W_{t_{j+1}^2(1+z)^2/(4v)}^\nu (f)(x) 
\Big|^\rho
\Big)^{1/\rho}
\, dv \, dz.
\end{align*}
Then,
\begin{align*} 
\mathcal{V}_\rho(\{t^\beta \partial_t^\beta P_t^\nu\}_{t>0})(f)(x)
& \lesssim
\sum_{k_1+2k_2=m+1}
\int_0^\infty
\frac{z^{m-\beta-1}}{(1+z)^m}
\int_0^\infty e^{-v} v^{k_1+k_2-3/2} \, dv \, dz \nonumber \\
& \quad \times
\mathcal{V}_\rho(\{W_t^\nu\}_{t>0})(f)(x) \nonumber \\
& \lesssim
\mathcal{V}_\rho(\{W_t^\nu\}_{t>0})(f)(x).
\end{align*}
Note that if $k_1$, $k_2 \in \NN$ such that
$k_1+2k_2=m+1$ we have that 
\begin{equation*}
k_1 + k_2 - \frac{3}{2}
=
m-k_2 - \frac{1}{2}
\geq
m - \frac{m+1}{2} - \frac{1}{2}
=
\frac{m}{2}-1. \qedhere
\end{equation*}
\end{proof}

As it was mentioned above, the semigroup 
$\{W_t^\nu\}_{t>0}$ is not a diffusion semigroup and then 
\cite[Corollary 6.1]{LeMX} cannot be used to obtain
$L^p$-boundedness properties for 
$\mathcal{V}_\rho(\{W_t^\nu\}_{t>0})$.
We continue analyzing the following $L^2$-boundedness.

\begin{Prop}\label{Prop:2.3}
Let $\nu>-1$ and $\rho>2$. The operator 
$\mathcal{V}_\rho(\{W_t^\nu\}_{t>0})$ is bounded from $L^2((0,1),x^{2\nu+1}dx)$ into itself.
\end{Prop}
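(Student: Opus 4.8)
The plan is to exploit that $\Delta_\nu$ is a nonnegative self-adjoint operator on the Hilbert space $L^2((0,1),x^{2\nu+1}dx)$ with orthonormal eigenbasis $\{\phi_n^\nu\}_{n\in\NN}$, so that $W_t^\nu$ acts as the spectral multiplier $c_n^\nu(f)\mapsto e^{-t\lambda_{n,\nu}^2}c_n^\nu(f)$. The classical route is to split the variation into its \emph{long} and \emph{short} parts: since $\rho>2$ one has $\ell^2\hookrightarrow\ell^\rho$, which yields the numerical domination
\[
\mathcal{V}_\rho(\{W_t^\nu\}_{t>0})(f)(x)\lesssim \mathcal{V}_\rho(\{W_{2^k}^\nu\}_{k\in\ZZ})(f)(x)+\mathcal{S}_V(\{W_t^\nu\}_{t>0})(f)(x),
\]
reducing matters to the long variation along the lacunary times $\{2^k\}_{k\in\ZZ}$ and the short variation. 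I would first dispose of the short variation, the soft part, and then concentrate on the long variation.

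For the short variation I would bound it by the Littlewood--Paley $g$-function $g(f):=\big(\int_0^\infty|t\partial_t W_t^\nu(f)|^2\,dt/t\big)^{1/2}$. On each dyadic block one writes each increment as $W_{\varepsilon_j}^\nu f-W_{\varepsilon_{j+1}}^\nu f=\int_{\varepsilon_{j+1}}^{\varepsilon_j}\partial_t W_t^\nu f\,dt$ and applies Cauchy--Schwarz, obtaining $\mathcal{V}_k(\{W_t^\nu\})(f)(x)^2\lesssim 2^{-k}\int_{2^{-k}}^{2^{-k+1}}|\partial_t W_t^\nu f(x)|^2\,dt$; summing in $k$ and using $t\sim 2^{-k}$ on the block gives $\mathcal{S}_V(\{W_t^\nu\})(f)\lesssim g(f)$. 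The $L^2$-boundedness of $g$ is then immediate from Parseval and the change of variables $u=t\lambda_{n,\nu}^2$:
\[
\|g(f)\|_{L^2((0,1),x^{2\nu+1}dx)}^2=\sum_{n=1}^\infty|c_n^\nu(f)|^2\int_0^\infty (t\lambda_{n,\nu}^2)^2e^{-2t\lambda_{n,\nu}^2}\frac{dt}{t}=C\,\|f\|_{L^2((0,1),x^{2\nu+1}dx)}^2.
\]

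The main obstacle is the long variation $\mathcal{V}_\rho(\{W_{2^k}^\nu\}_{k\in\ZZ})$. Here the outer supremum over all decreasing sequences cannot be commuted with the $L^2$-norm, and, crucially, the naive domination by $g(f)$ fails by a logarithmic factor — which is precisely why $\rho>2$ is imposed and why the scalar bound $\mathcal{V}_\rho(\{e^{-t\lambda}\}_t)\leq 1$ does not transfer directly through the spectral theorem. I would handle it through the L\'epingle/Bourgain-type $\lambda$-jump inequality: it suffices to prove the uniform $L^2$ bound $\sup_{\lambda>0}\big\|\lambda\,\Lambda(\{W_t^\nu\}_{t>0},\lambda)(f)^{1/2}\big\|_{L^2((0,1),x^{2\nu+1}dx)}\lesssim\|f\|_{L^2((0,1),x^{2\nu+1}dx)}$ and then recover the $\rho$-variation for $\rho>2$ by summing over dyadic jump-heights $\lambda=2^m$. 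The jump bound itself I would obtain from a scale-localized spectral square-function argument: for jumps of height $\geq\lambda$ only a bounded number of dyadic time-scales can contribute at each spectral value, so that after Parseval the inner sum $\sum_k|e^{-2^{k}u}-e^{-2^{k'}u}|^2$ is controlled uniformly in the spectral parameter $u$, the self-adjointness providing the required orthogonality. This scale-localization, which replaces the divergent $\int dt/t$ by a genuinely finite count of contributing scales, is the heart of the matter.

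I would stress that the whole scheme uses only that $\Delta_\nu$ is nonnegative and self-adjoint and that $\{W_t^\nu\}_{t>0}$ is a bounded analytic semigroup on $L^2((0,1),x^{2\nu+1}dx)$; it does not require $\{W_t^\nu\}_{t>0}$ to be a symmetric diffusion (Markovian) semigroup, which is exactly the feature whose absence obstructs a direct appeal to \cite[Corollary 6.1]{LeMX}. The hypothesis $\rho>2$ enters twice: in the $\ell^2\hookrightarrow\ell^\rho$ step of the long/short splitting, and in the dyadic summation of the jump inequality.
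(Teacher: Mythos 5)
Your long/short splitting and your treatment of the short variation are correct: $\mathcal{S}_V(\{W_t^\nu\}_{t>0})\lesssim g(f)$ by Cauchy--Schwarz on dyadic blocks, and $g$ is a square function built from the \emph{fixed linear} operators $t\partial_t W_t^\nu$, so Parseval in the eigenbasis $\{\phi_n^\nu\}_{n\in\NN}$ applies. The gap is in the long (lacunary) variation, which you yourself call the heart of the matter. The jump count $\Lambda(\{W_{2^k}^\nu\}_{k\in\ZZ},\lambda)(f)(x)$ is a nonlinear, pointwise-in-$x$ functional of $f$: \emph{which} dyadic scales realize a jump at a given point $x$ depends on $x$ and on $f$, so this quantity does not diagonalize in the spectral representation, and there is no well-defined ``inner sum $\sum_k|e^{-2^ku}-e^{-2^{k'}u}|^2$ at fixed spectral parameter $u$'' to which Parseval could be applied. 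Scale-localization of the kind you describe is available precisely for square functions with an $x$-independent assignment of scales --- which is why your short-variation step is rigorous --- and not for jumps or variations; self-adjointness provides orthogonality between fixed operators applied to $f$, not between the $x$-dependent increments selected by a jump count. (A secondary issue: even granting the uniform $L^2$ jump inequality, summing over dyadic heights $\lambda=2^m$ yields in general only a \emph{weak} type $(2,2)$ bound for $\mathcal{V}_\rho$, as in \cite{JSW}; upgrading to strong type requires jump estimates at exponents on both sides of $2$ together with interpolation.)

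More fundamentally, the principle you invoke --- that nonnegative self-adjointness plus analyticity on $L^2$ suffices for variational (or even maximal) inequalities --- is false, which is exactly why the paper does not argue spectrally. Take a Menshov orthonormal system $\{\phi_n\}_{n\in\NN}$ in $L^2(0,1)$ and $\{c_n\}\in\ell^2$ such that, for $f:=\sum_n c_n\phi_n$, the partial sums $S_mf:=\sum_{n<m}c_n\phi_n$ satisfy $\sup_m|S_mf|=\infty$ a.e., and define $L\phi_n:=2^n\phi_n$. Then $L$ is nonnegative and self-adjoint, so $\{e^{-tL}\}_{t>0}$ is a bounded analytic semigroup on $L^2(0,1)$; but the multipliers $e^{-2^{n-m}}-\chi_{\{n<m\}}$ are almost orthogonal, $\sup_n\sum_m|e^{-2^{n-m}}-\chi_{\{n<m\}}|^2<\infty$, whence
\[
\Big\|\Big(\sum_{m}\big|e^{-2^{-m}L}f-S_mf\big|^2\Big)^{1/2}\Big\|_{L^2(0,1)}\lesssim\|f\|_{L^2(0,1)},
\]
so $\sup_{t>0}|e^{-tL}f|=\infty$ a.e.; since $\sup_{t>0}|e^{-tL}f|\le|e^{-L}f|+\mathcal{V}_\rho(\{e^{-tL}\}_{t>0})(f)$ pointwise, the operator $\mathcal{V}_\rho(\{e^{-tL}\}_{t>0})$ is unbounded on $L^2(0,1)$ for every $\rho$. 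Some structure beyond the Hilbert space is therefore indispensable, and the paper's proof supplies it through a ground-state (Doob) transform: $\WW_t^\nu(f)=e^{\lambda_{1,\nu}^2t}(\phi_1^\nu)^{-1}W_t^\nu(f\phi_1^\nu)$ is a genuine symmetric diffusion (Markovian) semigroup on $L^2((0,1),x^{2\nu+1}(\phi_1^\nu(x))^2dx)$ (studied in \cite{LN}), to which \cite[Corollary 6.1]{LeMX} does apply; the variation bound is pulled back through the isometry $f\mapsto f\phi_1^\nu$, and the scalar factor $e^{-\lambda_{1,\nu}^2t}$ is removed at the cost of the maximal operator $\widetilde{W}_*^\nu$, bounded on $L^2$ via Stein's maximal theorem \cite[p. 73]{SteinLP}. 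If you wish to repair your scheme, it is this transference to a Markovian semigroup, not a spectral argument, that must carry the long variation.
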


\begin{proof}
For every $n \in \NN$, we define
$
\varphi_n^\nu 
:= \phi_n^\nu/\phi_1^\nu.   
$
The sequence $\{\varphi_n^\nu \}_{n \in \NN}$ is a complete orthonormal basis in 
$L^2\Big((0,1),x^{2\nu+1}(\phi_1^\nu(x))^2dx\Big)$.
We consider, for every 
$f \in L^2\Big((0,1),x^{2\nu+1}(\phi_1^\nu(x))^2dx\Big)$ and $t>0$, 
\begin{equation*}
\WW_t^\nu(f)
:=
\sum_{n=1}^\infty 
e^{-(\lambda_{n,\nu}^2-\lambda_{1,\nu}^2)t}
d_n^\nu(f) \varphi_n^\nu,
\end{equation*}
where, for every $n \in \NN$,
\begin{equation*}
d_n^\nu(f)   
:=
\int_0^1 f(y) \varphi_n^\nu(y) y^{2\nu+1} 
(\phi_1^\nu(y))^2 \, dy.
\end{equation*}

The semigroup $\{\WW_t^\nu\}_{t>0}$ was studied in \cite{LN}.
$\{\WW_t^\nu\}_{t>0}$ is a symmetric diffusion semigroup with respect to 
$\Big((0,1),x^{2\nu+1}(\phi_1^\nu(x))^2dx\Big)$.
Then, according to \cite[Corollary 6.1]{LeMX}
the operator 
$\mathcal{V}_\rho(\{\WW_t^\nu\}_{t>0})$ 
is bounded from
$L^p\Big((0,1),x^{2\nu+1}(\phi_1^\nu(x))^2dx\Big)$
into itself, for every $1<p<\infty$.\\

We define, for every $t>0$,
\begin{equation*}
\widetilde{W}_t^\nu
:=
e^{\lambda_{1,\nu}^2 t} W_t^\nu.
\end{equation*}
We have that
\begin{equation*}
\WW_t^\nu(f)
=
\frac{1}{\phi_1^\nu} 
\widetilde{W}_t^\nu(f \phi_1^\nu), \quad t>0.
\end{equation*}
Then,
\begin{equation*}
\mathcal{V}_\rho(\{\WW_t^\nu\}_{t>0})(f)
=
\frac{1}{\phi_1^\nu} 
\mathcal{V}_\rho(\{\widetilde{W}_t^\nu\}_{t>0})(f \phi_1^\nu).
\end{equation*}
It follows that 
$\mathcal{V}_\rho(\{\widetilde{W}_t^\nu\}_{t>0})$
is bounded from 
$L^2((0,1),x^{2\nu+1}dx)$ into itself.\\

Suppose that $\{t_j\}_{j \in \NN}$ is a decreasing sequence in $(0,\infty)$. We can write,
for $x \in (0,1)$,
\begin{align*}
&
\Big(
\sum_{j=1}^\infty
\Big|
W_{t_j}^\nu (f)(x) 
-
W_{t_{j+1}}^\nu (f)(x) 
\Big|^\rho
\Big)^{1/\rho}  \\
& \qquad 
=
\Big(
\sum_{j=1}^\infty
\Big|
e^{-\lambda_{1,\nu}^2 t_j}
\widetilde{W}_{t_j}^\nu (f)(x) 
-
e^{-\lambda_{1,\nu}^2 t_{j+1}}
\widetilde{W}_{t_{j+1}}^\nu (f)(x) 
\Big|^\rho
\Big)^{1/\rho}  \\
& \qquad 
\leq 
\Big(
\sum_{j=1}^\infty
e^{-\lambda_{1,\nu}^2 t_j \rho}
\Big|
\widetilde{W}_{t_j}^\nu (f)(x) 
-
\widetilde{W}_{t_{j+1}}^\nu (f)(x) 
\Big|^\rho
\Big)^{1/\rho} \\
& \qquad \quad 
+
\Big(
\sum_{j=1}^\infty
\Big|
e^{-\lambda_{1,\nu}^2 t_j}
-
e^{-\lambda_{1,\nu}^2 t_{j+1}}
\Big|^\rho
\Big|\widetilde{W}_{t_{j+1}}^\nu (f)(x) \Big|^\rho
\Big)^{1/\rho} \\
& \qquad 
\leq 
\Big(
\sum_{j=1}^\infty
\Big|
\widetilde{W}_{t_j}^\nu (f)(x) 
-
\widetilde{W}_{t_{j+1}}^\nu (f)(x) 
\Big|^\rho
\Big)^{1/\rho} 
+
\sup_{t>0}
\Big|\widetilde{W}_{t}^\nu (f)(x) \Big|
\,
\sum_{j=1}^\infty
\Big|
e^{-\lambda_{1,\nu}^2 t_j}
-
e^{-\lambda_{1,\nu}^2 t_{j+1}}
\Big| \\
& \qquad 
\lesssim 
\Big(
\sum_{j=1}^\infty
\Big|
\widetilde{W}_{t_j}^\nu (f)(x) 
-
\widetilde{W}_{t_{j+1}}^\nu (f)(x) 
\Big|^\rho
\Big)^{1/\rho} 
+
\sup_{t>0}
\Big|\widetilde{W}_{t}^\nu (f)(x) \Big|
\,
\sum_{j=1}^\infty
\int_{t_{j+1}}^{t_j}
e^{-\lambda_{1,\nu}^2 t}
\, dt \\
& \qquad 
\lesssim
\Big(
\sum_{j=1}^\infty
\Big|
\widetilde{W}_{t_j}^\nu (f)(x) 
-
\widetilde{W}_{t_{j+1}}^\nu (f)(x) 
\Big|^\rho
\Big)^{1/\rho} 
+
\sup_{t>0}
\Big|\widetilde{W}_{t}^\nu (f)(x) \Big|
\,
\int_{0}^{\infty}
e^{-\lambda_{1,\nu}^2 t}
\, dt.
\end{align*}
Then,
\begin{equation*}
\mathcal{V}_\rho(\{W_t^\nu\}_{t>0})(f)
\lesssim
\mathcal{V}_\rho(\{\widetilde{W}_t^\nu\}_{t>0})(f)
+
 \widetilde{W}_*^\nu(f),
\end{equation*}
where
\begin{equation*}
\widetilde{W}_*^\nu(f)    
:= \sup_{t>0} |\widetilde{W}_t^\nu(f)|.
\end{equation*}
According to \cite[p. 73]{SteinLP}
the maximal operator $\WW_*^\nu$ defined by
\begin{equation*}
\WW_*^\nu(f)    
:= \sup_{t>0} |\WW_t^\nu(f)|.
\end{equation*}
is bounded from 
$L^p\Big((0,1),x^{2\nu+1}(\phi_1^\nu(x))^2dx\Big)$
into itself, for every $1<p<\infty$. Then, $\widetilde{W}_*$ is bounded from
$L^2((0,1),x^{2\nu+1}dx)$ into itself.\\

We conclude that the operator 
$\mathcal{V}_\rho(\{W_t^\nu\}_{t>0})$ is bounded from $L^2((0,1),x^{2\nu+1}dx)$ into itself.
\end{proof}

As a consequence of Lemma \ref{Lem:2.4}
and Proposition \ref{Prop:2.3} we obtain the following result.

\begin{Cor}
Let $\nu>-1$, $\rho>2$ and $\beta \geq 0$. The operator 
$\mathcal{V}_\rho(\{t^\beta \partial_t^\beta P_t^\nu\}_{t>0})$ is bounded from $L^2((0,1),x^{2\nu+1}dx)$ into itself.
\end{Cor}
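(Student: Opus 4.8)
The plan is to combine the two results just established, since they are tailored for exactly this purpose. Lemma \ref{Lem:2.4} provides the pointwise domination
\begin{equation*}
\mathcal{V}_\rho(\{t^\beta \partial_t^\beta P_t^\nu\}_{t>0})(f)(x)
\lesssim
\mathcal{V}_\rho(\{W_t^\nu\}_{t>0})(f)(x),
\quad x \in (0,1),
\end{equation*}
valid for every $f \in L^2((0,1),x^{2\nu+1}dx)$, with an implied constant depending only on $\nu$, $\rho$ and $\beta$ but independent of $f$ and of $x$. The strategy is simply to take the $L^2((0,1),x^{2\nu+1}dx)$-norm of both sides of this inequality.

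First I would note that the pointwise bound is between nonnegative measurable functions on $(0,1)$, so monotonicity of the integral with respect to the measure $x^{2\nu+1}\,dx$ preserves the inequality after raising to the power $2$ and integrating. This yields
\begin{equation*}
\big\| \mathcal{V}_\rho(\{t^\beta \partial_t^\beta P_t^\nu\}_{t>0})(f) \big\|_{L^2((0,1),x^{2\nu+1}dx)}
\lesssim
\big\| \mathcal{V}_\rho(\{W_t^\nu\}_{t>0})(f) \big\|_{L^2((0,1),x^{2\nu+1}dx)},
\end{equation*}
where the constant is the one furnished by Lemma \ref{Lem:2.4} and is again independent of $f$. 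Next, Proposition \ref{Prop:2.3} states precisely that $\mathcal{V}_\rho(\{W_t^\nu\}_{t>0})$ is bounded on $L^2((0,1),x^{2\nu+1}dx)$, so the right-hand side is controlled by $C\|f\|_{L^2((0,1),x^{2\nu+1}dx)}$. Chaining the two estimates gives the claimed $L^2$-boundedness.

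I expect no genuine obstacle here: all the substantive work has already been carried out in establishing Lemma \ref{Lem:2.4} (the subordination-and-Fa\`a-di-Bruno computation reducing the fractional Poisson variation to the heat-semigroup variation) and Proposition \ref{Prop:2.3} (the transference to the symmetric diffusion semigroup $\{\WW_t^\nu\}_{t>0}$ together with the maximal-function correction term). The only points worth recording explicitly are that the constant in Lemma \ref{Lem:2.4} is uniform in $f$ and $x$, so that it passes outside the norm, and that both operators are measurable in the sense alluded to in the introduction, so that the norms are well defined. Thus the corollary is an immediate consequence, and its proof is one line.
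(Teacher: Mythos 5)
Your proposal is correct and is exactly the paper's argument: the authors state this corollary as an immediate consequence of Lemma \ref{Lem:2.4} (the pointwise domination of $\mathcal{V}_\rho(\{t^\beta \partial_t^\beta P_t^\nu\}_{t>0})$ by $\mathcal{V}_\rho(\{W_t^\nu\}_{t>0})$) together with Proposition \ref{Prop:2.3} (the $L^2$-boundedness of the latter), with no further proof written out. Your explicit chaining of the two estimates, including the remark that the implicit constant is uniform in $f$ and $x$, is precisely what that consequence amounts to.
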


$L^p$-boundedness properties of 
$\mathcal{V}_\rho(\{ \WW_t^\nu\}_{t>0})$ cannot be transferred to 
$\mathcal{V}_\rho(\{W_t^\nu\}_{t>0})$
when $p \neq 2$, as we have just done for $p=2$ in the proof of Proposition \ref{Prop:2.3}.
Instead, we are going to use vector-valued Calder\'on-Zygmund theory
(see \cite{RuFRT1,RuFRT2}) as explained below.\\

We define $E_\rho$ as the space of continuous functions $\Psi$ on $(0,\infty)$ such that
\begin{equation*}
\|\Psi\|_\rho    
:=
\sup_{\substack{0<t_n<t_{n-1}<\dots<t_1\\n\in\NN}}
\Big( 
\sum_{j=1}^{n-1}
\Big| 
\Psi(t_j) - \Psi(t_{j+1})
\Big|^\rho 
\Big)^{1/\rho}
< \infty.
\end{equation*}
It is clear that, $\|\Psi\|_\rho=0$ if and only if
$\Psi(t)=\alpha$, $t \in (0,\infty)$, for some $\alpha \in \CC$. We say that $f \sim g$, where $f,g \in E_\rho$, when $f-g=\alpha$ on $(0,\infty)$ for some $\alpha \in \CC$. Then, by defining $\| \cdot \|_\rho$
on $E_\rho / \sim$ in the natural way,
$(E_\rho / \sim,\|\cdot\|_\rho)$ is a Banach space.\\

Suppose that $\Psi$ is a differentiable function on $(0,\infty)$. If $\{t_j\}_{j=1}^\infty$ is a decreasing sequence on $(0,\infty),$ we can write 
\begin{align*}
\left(\sum_{j=1}^\infty|\Psi(t_j)-\Psi(t_{j+1})|^\rho\right)^{1/\rho}&=\left(\sum_{j=1}^\infty\left|\int_{t_{j+1}}^{t_j}\Psi'(t)dt\right|^\rho\right)^{1/\rho}\le \sum_{j=1}^\infty\int_{t_{j+1}}^{t_j}\left|\Psi'(t)\right| dt\\
&\le\int_{0}^\infty\left|\Psi'(t)\right| dt.
\end{align*}
Therefore, 
\begin{equation}\label{A}
\mathcal{V}_\rho (\{g(t)\}_{t>0})\le \int_0^\infty |g'(t)|dt.
\end{equation}
We have that
\begin{equation*}
\mathcal{V}_\rho(\{t^\beta \partial_t^\beta P_t^\nu\}_{t>0})(f)(x)
=
\| t^\beta \partial_t^\beta P_t^\nu(f)(x) \|_{E_\rho},
\quad x \in (0,1).
\end{equation*}
We consider the operator $T_\beta^\nu$ defined by
\begin{equation*}
T_\beta^\nu(f)(t,x)
:=
t^\beta \partial_t^\beta P_t^\nu(f)(x),
\quad x \in (0,1), \quad t>0.
\end{equation*}
Also, we define
\begin{equation*}
\mathbb{T}_\beta^\nu(f)(x)
:=
\int_0^1 H_\beta^\nu(x,y) f(y) y^{2\nu+1} \, dy,
\end{equation*}
where, for every $x,y \in (0,1)$, $x \neq y$,
\begin{equation*}
\begin{array}{rrccl}
H_\beta^\nu(x,y) & : & (0,\infty) & \longrightarrow & \mathbb{R}   \\
&& t & \longmapsto &
[H_\beta^\nu(x,y)](t)
:= t^\beta \partial_t^\beta P_t^\nu(x,y),
\end{array}
\end{equation*}
and the integral is understood in the 
$E_\rho/\sim$ B\"ochner sense.\\
We are going to see that the operator $T_\beta^\nu$ is a $E_\rho$-Calder\'on-Zygmund operator associated with the kernel $H_\beta^\nu (x,y).$

In the next lemmas we prove size and regularity properties for $H_\beta^\nu$ establishing that it is a standard $E_\rho$-Calder\'on-Zygmund kernel.

\begin{Lem}\label{Prop:2.1}
Let $\nu>-1$ and $\beta\ge 0$. We have that, for any $x,y \in (0,1)$,
\begin{equation*}
\|
H_\beta^\nu(x,y)
\|_{E_\rho} 
\lesssim
\left\{
\begin{array}{ll}
x^{-2(\nu+1)},  
& 0 < y \leq x/2,  \\
& \\
(xy)^{-\nu-1/2}|x-y|^{-1}, 
& x/2 < y \leq \min\{1,3x/2\},\: \:x\neq y \\
& \\
y^{-2(\nu+1)}, & \min\{1,3x/2\} < y \leq 1.
\end{array}
\right.
\end{equation*}
\end{Lem}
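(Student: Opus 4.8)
The plan is to reduce the whole statement to a single scalar heat–kernel integral and then estimate that integral region by region. The starting point is the elementary inequality \eqref{A}, which together with the identity $\|H_\beta^\nu(x,y)\|_{E_\rho}=\mathcal{V}_\rho(\{t^\beta\partial_t^\beta P_t^\nu(x,y)\}_{t>0})$ gives
\begin{equation*}
\|H_\beta^\nu(x,y)\|_{E_\rho}
\le
\int_0^\infty \Big|\partial_t\big[t^\beta\partial_t^\beta P_t^\nu(x,y)\big]\Big|\,dt .
\end{equation*}
Thus everything follows from a good pointwise-in-$(x,y)$ bound for this $t$-integral.

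First I would run the computation of the proof of Lemma \ref{Lem:2.4} \emph{at the kernel level}: since all the operations (subordination, Weyl differentiation, Fa\`a di Bruno) act only on $t$, they commute with the $y$-integral, so $t^\beta\partial_t^\beta P_t^\nu(x,y)$ is a finite sum (over $k_1+2k_2=m+1$) of the terms obtained from \eqref{eq:I3} with $W_u^\nu(f)(x)$ replaced by $W_u^\nu(x,y)$. Undoing the inner $v$-integral by the substitution $v=t^2(1+z)^2/(4u)$ rewrites each term as
\begin{equation*}
t^{2(k_1+k_2)-1}\int_0^\infty \frac{z^{m-\beta-1}}{(1+z)^{m+1-2(k_1+k_2)}}
\int_0^\infty \frac{e^{-t^2(1+z)^2/(4u)}}{u^{k_1+k_2+1/2}}\,W_u^\nu(x,y)\,du\,dz .
\end{equation*}
Differentiating in $t$ produces two families of terms (one from the power of $t$, one from the exponential); in each I take absolute values, apply Tonelli (all factors being nonnegative once $W_u^\nu\ge0$ is used), and evaluate the Gaussian integral $\int_0^\infty t^a e^{-t^2(1+z)^2/(4u)}\,dt$. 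A short computation shows that after the $t$-integration every term collapses to $\dfrac{z^{m-\beta-1}}{(1+z)^m}\,\dfrac{W_u^\nu(x,y)}{u}$, and the remaining $z$-integral equals the finite Beta value $B(m-\beta,\beta)$. Hence
\begin{equation*}
\|H_\beta^\nu(x,y)\|_{E_\rho}\lesssim \int_0^\infty \frac{W_u^\nu(x,y)}{u}\,du .
\end{equation*}
The case $\beta=0$ (where the $z$-integral degenerates) is handled directly from \eqref{eq:subordination}, using $|\partial_t[te^{-t^2/(4u)}]|\lesssim e^{-ct^2/u}$ and $\int_0^\infty u^{-3/2}e^{-ct^2/u}\,dt\sim u^{-1}$, which yield the same bound.

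Then I would estimate $\int_0^\infty W_u^\nu(x,y)u^{-1}\,du$ by inserting \eqref{eq:2.1} and splitting the $u$-axis. In the middle case $x/2<y\le\min\{1,3x/2\}$ we have $y\sim x$, hence $xy\sim x^2$ and $|x-y|\lesssim\sqrt{xy}$; dropping $1\wedge\frac{(1-x)(1-y)}{u}\le1$, the dominant contribution comes from $u\lesssim xy$, where $(u+xy)^{-\nu-1/2}\sim(xy)^{-\nu-1/2}$ and $\int_0^\infty u^{-3/2}e^{-|x-y|^2/(4u)}\,du\sim|x-y|^{-1}$, producing the desired $(xy)^{-\nu-1/2}|x-y|^{-1}$; the ranges $xy<u\le1$ and $u>1$ are controlled by $|x-y|\lesssim\sqrt{xy}$ and by the decay $e^{-\lambda_{1,\nu}^2u}$, respectively. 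For the off-diagonal cases $0<y\le x/2$ and $\min\{1,3x/2\}<y\le1$ one has $|x-y|\sim R$ and $xy\lesssim R^2$ with $R:=\max\{x,y\}$ (so $R=x$ in the first case and $R=y$ in the last, interchanged by the $x\leftrightarrow y$ symmetry of \eqref{eq:2.1}); now the Gaussian $e^{-|x-y|^2/(4u)}$ localizes the integral to $u\gtrsim R^2$, and the substitution $w=R^2/u$ gives $\lesssim R^{-2(\nu+1)}$, using $\nu>-1$ for convergence at $w=\infty$ and $R^{-2(\nu+1)}\ge1$ to absorb the bounded tail from $u>1$.

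The main obstacle is the off-diagonal estimate when $-1<\nu<-1/2$: there $-\nu-1/2>0$, so $(u+xy)^{-\nu-1/2}$ is \emph{increasing} in $u$ and cannot be replaced by a single power of $u$, while the factor $(xy)^{-\nu-1/2}$ arising from small $u$ blows up as the smaller variable tends to $0$. The resolution is to retain the full Gaussian weight $e^{-|x-y|^2/(4u)}$: writing the small-$u$ part in terms of $r=R^2/(\text{smaller variable})\gtrsim1$ and using $r^{\nu}e^{-cr}\lesssim1$ shows that the Gaussian decay dominates the polynomial blow-up, so one still lands on $R^{-2(\nu+1)}$. Organizing the bookkeeping via the two cases $\nu\ge-1/2$ and $-1<\nu<-1/2$, exactly as in the estimate used in Remark \ref{derivada}, keeps this under control.
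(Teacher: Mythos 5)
Your reduction of the variation norm is correct and is essentially the paper's own argument: via \eqref{A} one bounds $\|H_\beta^\nu(x,y)\|_{E_\rho}$ by $\int_0^\infty|\partial_t[t^\beta\partial_t^\beta P_t^\nu(x,y)]|\,dt$, and then subordination together with the Gaussian derivative bounds of \cite[Lemma 3]{BCCFR} collapses everything to the key quantity $\int_0^\infty W_u^\nu(x,y)\,u^{-1}\,du$. (The paper reaches this integral through the composition identity $\int_0^u(u-t)^{m-\gamma-1}t^{\gamma-1}\,dt\sim u^{m-1}$ applied to both $H_\beta^\nu$ and $H_{\beta+1}^\nu$, rather than through the Fa\`a di Bruno expansion of Lemma \ref{Lem:2.4}, but that is only bookkeeping; your Beta-integral computation and your separate treatment of $\beta=0$ are fine.) Your estimates for the middle region and for the off-diagonal regions when $\nu\ge-1/2$ also agree with the paper's.

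The flaw is precisely the paragraph you call ``the main obstacle'', the off-diagonal case for $-1<\nu<-1/2$. First, the difficulty you describe does not exist: since $-\nu-1/2>0$, the map $s\mapsto s^{-\nu-1/2}$ is \emph{increasing}, so $(xy)^{-\nu-1/2}$ tends to $0$, not to infinity, as the smaller variable tends to $0$; indeed for $0<y\le x/2$ one has directly $(xy)^{-\nu-1/2}\le (x^2)^{-\nu-1/2}=x^{-2\nu-1}$. (A blow-up of $(xy)^{-\nu-1/2}$ occurs only when $\nu>-1/2$, and in that regime the needed bound follows from $(u+xy)^{-\nu-1/2}\le u^{-\nu-1/2}$, which is how the paper handles it.) Second, your proposed resolution does not hold as written: the quantity $r=R^2/\min\{x,y\}$ is \emph{not} $\gtrsim 1$ (take $x$ and $y$ both small with $y\le x/2$), and the inequality $r^{\nu}e^{-cr}\lesssim 1$, having a negative exponent, is trivially true and cannot absorb anything. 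The estimate you are after is nevertheless correct and needs no Gaussian subtlety at all: split the $u$-integral at $u=xy$; on $(0,xy)$ use $(u+xy)^{-\nu-1/2}\sim(xy)^{-\nu-1/2}\le R^{-2\nu-1}$ with $R:=\max\{x,y\}$ together with $\int_0^\infty e^{-cR^2/u}u^{-3/2}\,du\lesssim R^{-1}$, and on $(xy,\infty)$ use $(u+xy)^{-\nu-1/2}\lesssim u^{-\nu-1/2}$ together with $\int_0^\infty e^{-cR^2/u}u^{-\nu-2}\,du\lesssim R^{-2(\nu+1)}$ (convergent since $\nu>-1$). This is exactly the splitting $K_{m,1}^\nu+K_{m,2}^\nu$ in the paper; with it in place of your ``resolution'', your argument becomes a complete proof.
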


\begin{proof}

Let $m\in\NN$. We have that
\begin{align*}
    \partial_t^mP_t^\nu(x,y)&=\partial_t^m\left( \frac{t}{2\sqrt{\pi}}\int_0^\infty \frac{e^{-\frac{t^2}{4s}}}{s^{3/2}}W_s^\nu (x,y)ds\right)\\
    &=\frac{-1}{\sqrt{\pi}}\int_0^\infty \frac{\partial_t^{m+1}(e^{-\frac{t^2}{4s}})}{s^{1/2}}W_s^\nu (x,y)ds,\: \:\; x,y\in (0,1) \text{ and } \:t>0.
    \end{align*}
The differentiation under the integral sign is justified since from \cite[Lemma 3]{BCCFR} and \eqref{eq:2.1} we get that 
 \begin{align*}
    \int_0^\infty &\frac{|\partial_t^{m+1}e^{-\frac{t^2}{4s}}|}{s^{1/2}}W_s^\nu (x,y)ds \lesssim \int_0^\infty e^{-ct^2/s}W_s^\nu(x,y)\frac{ds}{s^{\frac{m+2}{2}}}\\
    &\lesssim \int_0^\infty \frac{e^{-ct^2/s}}{s^{\frac{m+2}{2}}}\frac{(1+s)^{\nu+2}}{(s+xy)^{\nu+1/2}}
\Big( 1 \wedge 
\frac{(1-x)(1-y)}{s}\Big)
\frac{1}{\sqrt{s}}
\exp\Big( - \frac{|x-y|^2}{4s} - \lambda_{1,\nu}^2 s\Big){ds}<\infty.
\end{align*}
  \\

{Let $\beta\ge 0$, and} $m\in\NN$  such that $m-1\le\beta<m$. We have that
\begin{align*}
    \partial_t(t^\beta\partial_t^\beta P_t^\nu(x,y))&=\partial_t\left(\frac{- t^{\beta}}{\Gamma(m-\beta)}\int_0^\infty \partial_t^m P_{t+s}^\nu(x,y)s^{m-\beta-1}ds \right)\\
    &=\frac{-\beta t^{\beta-1}}{\Gamma(m-\beta)}\int_0^\infty \partial_t^m P_{t+s}^\nu(x,y)s^{m-\beta-1}ds\\
    & \qquad 
    +\frac{- t^{\beta}}{\Gamma(m-\beta)}\int_0^\infty \partial_t^{m+1} P_{t+s}^\nu(x,y)s^{m-\beta-1}ds\\
    &=\beta t^{\beta-1}\partial_t^\beta P_{t}^\nu(x,y)+t^{\beta}\partial_t^{\beta+1} P_{t}^\nu(x,y), \:\:x,y\in (0,1), \:\;x\neq y.
\end{align*}
The derivation under the integral sign is justified because, as it can be seen in the sequel, the integrals are absolutely convergent.
\\
Let  $x,y \in (0,1)$, $x\neq y$. According to \eqref{A},  we can write
\begin{align*}
    \|
H_\beta^\nu(x,y)
\|_{E_\rho}& \lesssim \int_0^\infty|\partial_t[H_\beta^\nu(x,y)](t)|dt\\
&\le \beta\int_0^\infty | t^{\beta-1}\partial_t^\beta P_{t}^\nu(x,y)|dt+\int_0^\infty |t^{\beta}\partial_t^{\beta+1} P_{t}^\nu(x,y)|dt\\
&\le \beta\|
H_\beta^\nu(x,y)
\|_{L^1((0,\infty),\frac{dt}{t})} +\|
H_{\beta+1}^\nu(x,y)
\|_{L^1((0,\infty),\frac{dt}{t})} 
\end{align*}

{Let $\gamma>0$, and} $m \in \mathbb{N}$ such that $m-1 \leq \gamma<m$.
    Then,
\begin{align*}
\|
H_\gamma^\nu(x,y)
\|_{L^1((0,\infty),\frac{dt}{t})} 
& \leq 
\frac{1}{\Gamma(m-\gamma)}
\int_0^\infty 
t^{\gamma-1}
\int_0^\infty 
|\partial_t^m P_{t+s}^\nu (x,y)|
s^{m-\gamma-1} \, ds \, dt \\
& = 
\frac{1}{\Gamma(m-\gamma)}
\int_0^\infty 
|\partial_u^m P_{u}^\nu (x,y)|
\int_0^u 
(u-t)^{m-\gamma-1} t^{\gamma-1} \, dt \, du \\
& \lesssim
\int_0^\infty 
u^{m-1}
|\partial_u^m P_{u}^\nu (x,y)|
 \, du.
\end{align*}

According to \cite[Lemma 3]{BCCFR}
 we deduce 
\begin{align}\label{eq:2.1.1}
|\partial_u^m P_{u}^\nu (x,y)|
& \lesssim
\int_0^\infty 
\frac{|\partial_u^m [u e^{-u^2/(4s)}]|}{s^{3/2}} |W_s^\nu(x,y)| \, ds \nonumber \\
& \lesssim
\int_0^\infty 
\frac{e^{-c u^2/s}}{s^{(m+2)/2}} |W_s^\nu(x,y)| \, ds,
\quad u>0.
\end{align} 
Moreover, if we define
\begin{equation*}
K_m^\nu(x,y)
:=
\int_0^\infty 
u^{m-1}
|\partial_u^m P_{u}^\nu (x,y)|
 \, du, 
\end{equation*}
 it follows that
\begin{align*}
K_m^\nu(x,y)
& \lesssim
\int_0^\infty 
u^{m-1}
\int_0^\infty
\frac{e^{-c u^2/s}}{s^{(m+2)/2}} |W_s^\nu(x,y)| \, ds \, du \\
& \lesssim
\int_0^\infty \frac{|W_s^\nu(x,y)|}{s} \, ds.
\end{align*}


Consider firstly the situation of $\nu \geq -1/2$.
The estimate \eqref{eq:2.1} implies
\begin{align*}
& K_m^\nu(x,y)
 \lesssim
\int_0^\infty 
\frac{e^{-|x-y|^2/(4s)}}{s^{3/2}}
\frac{(1+s)^{\nu+2}e^{-\lambda_{1,\nu}^2 s}}{(s+xy)^{\nu+1/2}} \, ds \\
& \qquad  
\lesssim
\left\{
\begin{array}{ll}
\displaystyle 
 \int_0^\infty \frac{e^{-c x^2/s}}{s^{\nu+2}} \,ds
\lesssim x^{-2(\nu+1)}, & 0 < y \leq x/2 ,\\
&  \\
\displaystyle
 (xy)^{-\nu-1/2}     
\int_0^\infty \frac{e^{-c |x-y|^2/s}}{s^{3/2}} \,ds
\lesssim
\frac{(xy)^{-\nu-1/2}}{|x-y|},
&
x/2 < y \leq \min\{1,3x/2\}, \: \:x\neq y \\
&\\
\displaystyle
\int_0^\infty \frac{e^{-c y^2/s}}{s^{\nu+2}} \,ds
\lesssim y^{-2(\nu+1)}, 
&
\min\{1,3x/2\}<y \leq 1.
\end{array}
\right.
\end{align*}

Suppose now that $-1<\nu<-1/2$.
We can write
\begin{align*}
K_m^\nu(x,y)
& \lesssim
\Big(
\int_0^{xy}
+
\int_{xy}^\infty
\Big)
\frac{e^{-|x-y|^2/(4s)}}{s^{3/2}}
(s+xy)^{-\nu-1/2} \, ds \\
& =:
K_{m,1}^\nu(x,y)
+
K_{m,2}^\nu(x,y).
\end{align*}
Hence,
\begin{align*}
 K_{m,1}^\nu(x,y)
& \lesssim
\left\{
\begin{array}{ll}
\displaystyle 
 x^{-2\nu-1} \int_0^{xy} \frac{e^{-c x^2/s}}{s^{3/2}} \,ds
\lesssim x^{-2(\nu+1)}, & 0 < y \leq x/2 ,\\
&  \\
\displaystyle
 (xy)^{-\nu-1/2}     
\int_0^{xy} \frac{e^{-c |x-y|^2/s}}{s^{3/2}} \,ds
\lesssim
\frac{(xy)^{-\nu-1/2}}{|x-y|} ,
&
x/2 < y \leq \min\{1,3x/2\}, \:  \:x\neq y \\
&\\
\displaystyle
  y^{-2\nu-1}
\int_0^{xy} \frac{e^{-c y^2/s}}{s^{3/2}} \,ds
\lesssim y^{-2(\nu+1)}, 
&
\min\{1,3x/2\}<y \leq 1.
\end{array}
\right.
\end{align*}
and
\begin{align*}
 K_{m,2}^\nu(x,y)
& \lesssim
\left\{
\begin{array}{ll}
\displaystyle 
 \int_{xy}^\infty \frac{e^{-c x^2/s}}{s^{\nu + 2}} \,ds
\lesssim x^{-2(\nu+1)}, & 0 < y \leq x/2 ,\\
&  \\
\displaystyle
\int_{xy}^\infty 
\frac{e^{-c |x-y|^2/s}}{s^{\nu + 2}} \,ds
\lesssim
\frac{(xy)^{-\nu-1/2}}{|x-y|} ,
&
x/2 < y \leq  \min\{1,3x/2\}, \: \:x\neq y \\
&\\
\displaystyle
\int_{xy}^\infty \frac{e^{-c y^2/s}}{s^{\nu +2}} \,ds
\lesssim y^{-2(\nu+1)}, 
&
\min\{1,3x/2\}<y \leq 1.
\end{array}
\right.
\end{align*}
Thus, the proof of this lemma is completed.
\end{proof}

\begin{Lem}\label{Prop:2.2}
Let $\nu>-1$ and $\beta\ge 0$. We have that, for any $x,y \in (0,1)$, $ \:x\neq y,$
\begin{equation*}
\|
\partial_x H_\beta^\nu(x,y)
\|_{E_\rho} 
+
\|
\partial_y H_\beta^\nu(x,y)
\|_{E_\rho}  
\lesssim
\frac{(xy)^{-\nu-1/2}}{|x-y|^2}.
\end{equation*}
\end{Lem}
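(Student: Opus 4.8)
The plan is to mirror the proof of Lemma~\ref{Prop:2.1}, carrying one extra spatial derivative through the same machinery; the only genuinely new ingredient is a \emph{pointwise} gradient estimate for the heat kernel $W_s^\nu(x,y)$, which is where the real work lies.

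First I would reduce to a weighted $L^1$ estimate in $t$. Applying \eqref{A} to the function $t\mapsto\partial_x[H_\beta^\nu(x,y)](t)$, and commuting $\partial_x$ with $\partial_t$ and with the Weyl and subordination integrals (legitimate by the absolute convergence already exploited in Remark~\ref{derivada} and in Lemma~\ref{Prop:2.1}), one gets
\[
\|\partial_x H_\beta^\nu(x,y)\|_{E_\rho}
\lesssim
\int_0^\infty |\partial_t\partial_x[H_\beta^\nu(x,y)](t)|\,dt .
\]
Using $\partial_t H_\beta^\nu=\beta t^{\beta-1}\partial_t^\beta P_t^\nu+t^\beta\partial_t^{\beta+1}P_t^\nu$ exactly as in Lemma~\ref{Prop:2.1}, this is bounded by $\|\partial_x H_\beta^\nu(x,y)\|_{L^1((0,\infty),dt/t)}+\|\partial_x H_{\beta+1}^\nu(x,y)\|_{L^1((0,\infty),dt/t)}$, and the same change of variables reduces each weighted norm to
\[
\widetilde{K}_m^\nu(x,y):=\int_0^\infty u^{m-1}|\partial_x\partial_u^m P_u^\nu(x,y)|\,du ,
\]
with $m-1\le\gamma<m$ for $\gamma=\beta,\beta+1$. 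Pushing $\partial_x$ inside the subordination representation of $P_u^\nu(x,y)$ and invoking \cite[Lemma~3]{BCCFR} as in \eqref{eq:2.1.1} gives $|\partial_x\partial_u^m P_u^\nu(x,y)|\lesssim\int_0^\infty s^{-(m+2)/2}e^{-cu^2/s}|\partial_x W_s^\nu(x,y)|\,ds$, whence $\widetilde{K}_m^\nu(x,y)\lesssim\int_0^\infty\frac{|\partial_x W_s^\nu(x,y)|}{s}\,ds$. Everything thus collapses onto this single integral.

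The decisive step, and the main obstacle, is a genuine pointwise bound for $\partial_x W_s^\nu(x,y)$: one cannot differentiate the two-sided estimate \eqref{eq:2.1} directly, so this bound must be established from the integral representation of $W_s^\nu$ (or by comparison with the Hankel heat kernel on $(0,\infty)$ together with boundary corrections). The estimate I would prove is that $\partial_x W_s^\nu(x,y)$ is dominated by $s^{-1/2}$ times the right-hand side of \eqref{eq:2.1}. The crucial gain comes from the Gaussian factor: $\partial_x e^{-|x-y|^2/(4s)}$ carries a factor $\tfrac{|x-y|}{s}$, and $\tfrac{|x-y|}{\sqrt s}e^{-|x-y|^2/(4s)}\lesssim e^{-c|x-y|^2/s}$ converts this into a clean power $s^{-1/2}$ at the cost of a slightly smaller Gaussian constant; the derivatives of $(s+xy)^{-\nu-1/2}$ and of the boundary cut-off $1\wedge\frac{(1-x)(1-y)}{s}$ are of strictly lower order.

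Inserting this gradient bound and splitting the $s$-integral into the three regions of Lemma~\ref{Prop:2.1}, the dominant diagonal contribution $x/2<y\le\min\{1,3x/2\}$ becomes
\[
(xy)^{-\nu-1/2}\int_0^\infty\frac{e^{-c|x-y|^2/s}}{s^{2}}\,ds\sim\frac{(xy)^{-\nu-1/2}}{|x-y|^2},
\]
which is precisely the claimed bound (the extra $s^{-1/2}$ over Lemma~\ref{Prop:2.1} is exactly what upgrades $|x-y|^{-1}$ to $|x-y|^{-2}$); the two off-diagonal regions are treated verbatim as in Lemma~\ref{Prop:2.1} and yield smaller contributions. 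The estimate for $\partial_y H_\beta^\nu$ follows by the $x\leftrightarrow y$ symmetry of the kernel. I expect the delicate point to be the rigorous derivation of the gradient heat-kernel estimate, and in particular the uniform control of the boundary cut-off derivative as $x,y\to1$, where the factor $|x-y|^{-2}$ leaves little margin.
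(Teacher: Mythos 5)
Your reduction is exactly the paper's: via \eqref{A} and the identity $\partial_t\bigl[t^\beta\partial_t^\beta P_t^\nu\bigr]=\beta t^{\beta-1}\partial_t^\beta P_t^\nu+t^\beta\partial_t^{\beta+1}P_t^\nu$ you pass to the weighted norms $\|\partial_x H_\gamma^\nu(x,y)\|_{L^1((0,\infty),dt/t)}$ for $\gamma\in\{\beta,\beta+1\}$, and then, via \cite[Lemma 3]{BCCFR}, to the single integral $\int_0^\infty |\partial_x W_s^\nu(x,y)|\,s^{-1}\,ds$. The divergence is at the crux. The paper does not prove any gradient heat-kernel bound: it quotes \cite[Lemma 4.31]{BDL}, i.e.\ \eqref{eq:1.2}, $|\partial_x W_s^\nu(x,y)|\lesssim (xy)^{-\nu-1/2}s^{-1}e^{-c(x-y)^2/s}$ for all $\nu>-1$, after which
\begin{equation*}
\int_0^\infty \frac{|\partial_x W_s^\nu(x,y)|}{s}\,ds
\lesssim (xy)^{-\nu-1/2}\int_0^\infty \frac{e^{-c(x-y)^2/s}}{s^2}\,ds
\lesssim \frac{(xy)^{-\nu-1/2}}{|x-y|^2}
\end{equation*}
finishes the proof in one line, for all $x\neq y$, with no case analysis and no region splitting.

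Your treatment of this crux has a genuine gap, in fact two. First, you never prove your gradient bound: formally differentiating the Gaussian in \eqref{eq:2.1} is, as you yourself concede, not an argument, and the proposal stops at ``the estimate I would prove is\dots''. Second, and more seriously, the bound you state --- $s^{-1/2}$ times the right-hand side of \eqref{eq:2.1}, with the cutoff $1\wedge\frac{(1-x)(1-y)}{s}$ retained --- is false. Indeed, $W_s^\nu(1,y)=0$ (each $\phi_n^\nu$ vanishes at $1$), while the lower bound in \eqref{eq:2.1} gives $W_s^\nu(x,y)\geq c(s,y)(1-x)$ for $x$ near $1$; if one also had $|\partial_x W_s^\nu(x,y)|\leq C(s,y)(1-x)$ there, integrating from $x$ to $1$ would yield $W_s^\nu(x,y)\leq C(s,y)(1-x)^2/2$, a contradiction for $x$ close to $1$. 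Equivalently, your claim that the derivative of the boundary cutoff is ``of strictly lower order'' fails: the product-rule term where $\partial_x$ hits $1\wedge\frac{(1-x)(1-y)}{s}$ has size $(1-y)/s$, which exceeds the cutoff itself by the factor $(1-x)^{-1}$, unbounded precisely in the regime $x,y\to 1$ you flagged as delicate. The repair is simple: drop the cutoff (and the $(s+xy)$ and $e^{-\lambda_{1,\nu}^2 s}$ refinements) from the claimed estimate; the cutoff-free bound is exactly \eqref{eq:1.2}, it is available by citation, and your subsequent computations only ever use the cutoff bounded by $1$, so with that substitution your argument closes and coincides with the paper's.
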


\begin{proof}
According to \eqref{A} and by proceeding as in the proof of Lemma \ref{Prop:2.1}, we get 
for $x,y\in (0,1)$, $x\neq y$,
\begin{align*}
    \|
\partial_x H_\beta^\nu(x,y)
\|_{E_\rho} &\lesssim \beta\int_0^\infty | t^{\beta-1}\partial_t^\beta\partial_x P_{t}^\nu(x,y)|dt+\int_0^\infty |t^{\beta}\partial_t^{\beta+1}\partial_x P_{t}^\nu(x,y)|dt\\
&\le  \beta\|
\partial_xH_\beta^\nu(x,y)
\|_{L^1((0,\infty),\frac{dt}{t})} +\|
\partial_xH_{\beta+1}^\nu(x,y)
\|_{L^1((0,\infty),\frac{dt}{t})}.
\end{align*}
Let  $x,y \in (0,1)$, $ \:x\neq y$, and take $m \in \mathbb{N}$ such that $m-1 \leq \gamma <m$. By arguing as in the proof of Lemma \ref{Prop:2.1} we obtain
\begin{equation*}
\|
\partial_xH_\gamma^\nu(x,y)
\|_{L^1((0,\infty), dt/t)} 
\lesssim
\int_0^\infty u^{m-1}
|\partial_u^m \partial_x P_u^\nu (x,y)| \, du.
\end{equation*}
Moreover, \cite[Lemma 3]{BCCFR} allows us to write
\begin{equation*}
|\partial_u^m \partial_x P_u^\nu (x,y)| 
\lesssim
\int_0^\infty 
\frac{e^{-cu^2/s}}{s^{(m+2)/2}}
|\partial_x W_s^\nu(x,y)| \, ds,
 \quad u>0.
\end{equation*}

Then,
\begin{equation*}
\int_0^\infty u^{m-1}
|\partial_u^m \partial_x P_u^\nu (x,y)| \, du
\lesssim
\int_0^\infty 
\frac{
|\partial_x W_s^\nu(x,y)|}{s} \, ds.
\end{equation*}

According to  \cite[Lemma 4.31]{BDL}, we have that
\begin{equation}\label{eq:1.2}
|\partial_xW_t^\nu(x,y)|
\lesssim
\frac{e^{-c (x-y)^2/t}}{(xy)^{\nu+1/2}t},
\quad \nu>-1, 
\end{equation}
which implies
\begin{align*}
\int_0^\infty u^{m-1}
|\partial_u^m \partial_x P_u^\nu (x,y)| \, du
& \lesssim
(xy)^{-\nu-1/2}
\int_0^\infty
\frac{e^{-c (x-y)^2/t}}{t^2} \, dt 
\lesssim
\frac{(xy)^{-\nu-1/2}}{|x-y|^2}, 
\end{align*}
and the proof of the property for
$\partial_xH_\gamma^\nu(x,y)$
is established.\\

By symmetry it also follows that
\begin{equation*}
\|
\partial_y H_\gamma^\nu(x,y)
\|_{L^1((0,\infty), dt/t)} 
\lesssim
\frac{(xy)^{-\nu-1/2}}{|x-y|^2}. \qedhere
\end{equation*}
\end{proof}

\begin{Rem}\label{Rem:2.1}
Littlewood-Paley-Stein $g_k$-functions associated with
$\{\phi_n^\nu\}_{n \in \NN}$ were studied by 
Ciaurri and Roncal (\cite{CR2}).
$L^p$-boundedness properties of the $g_k$-functions were established in \cite[Theorem 1]{CR2} by using vector-valued Calder\'on-Zygmund theory
(see \cite{RuFRT1,RuFRT2}). In order to do this they needed to show that the kernels of such operators satisfy the size and the regularity properties as standard Calder\'on-Zygmund kernels. These facts are proved in \cite[Proposition 2 and 3]{CR2}. The method implemented uses as a fundamental tool the following asymptotic representation for the Bessel function $J_\nu$ (\cite[p. 122]{Leb}). For every $M \in \NN$ there exist real numbers $A_{\nu,j}$ and $B_{\nu,j}$,
$j=0,\cdots, M$ such that
\begin{equation*}
\Big| 
\sqrt{z} J_\nu(z)
-
\sum_{j=0}^M
\Big(
\frac{A_{\nu,j}}{z^j} \sin z
+
\frac{B_{\nu,j}}{z^j} \cos z
\Big)
\Big|
\lesssim
|z|^{-(M+1)}, \quad z>1.
\end{equation*}
The proofs of \cite[Propositions 2 and 3]{CR2}
need subtle and large manipulations which cover up almost the entire paper.
\cite[Propositions 2 and 3]{CR2} have certain resemblance to our Lemmas \ref{Prop:2.1} and \ref{Prop:2.2}.
However, by using \eqref{eq:2.1} and \eqref{eq:1.2} 
 it is possible to deduce \cite[Propositions 2 and 3]{CR2} in a faster way.
\end{Rem}
Note that according to Lemma \ref{Prop:2.1}, for every $f \in L^p((0,1),x^{2\nu+1}dx)$ with $1\leq p \leq \infty$,
\begin{equation*}
\int_0^1 \|H_\beta^\nu(x,y)\|_{\rho} |f(y)| y^{2\nu+1} \, dy    
< \infty, \quad x \notin \supp f.
\end{equation*}

 Let {$f \in L^\infty(0,1)$.}
We are going to see that
\begin{equation}\label{eq:zerosupp}
0
=
\|\mathbb{T}_\beta^\nu(f)(x) - T_\beta^\nu(f)(\cdot,x) \|_{\rho}, \quad x \notin \supp f.
\end{equation}

Indeed, let $0<a<b{<\infty}$. We define $L_{a,b}$ as follows
\begin{equation*}
L_{a,b}(\Psi)
:=
\Psi(b)-\Psi(a), \quad \Psi \in E_\rho.
\end{equation*}
We have that 
\begin{equation*}
|L_{a,b}(\Psi)|
\leq 
\|\Psi\|_{\rho}, \quad \Psi \in E_\rho,
\end{equation*}
that is, $L_{a,b} \in (E_\rho)'$, the dual space of $E_\rho$. Then,
\begin{align*}
L_{a,b}(\mathbb{T}_\beta^\nu(f)(x))
& = 
\int_0^1 L_{a,b} \Big(H_\beta^\nu(x,y)\Big) f(y) y^{2\nu+1} \, dy \\
& = 
\int_0^1 
\Big(
t^\beta \partial_t^\beta P_t^\nu(x,y)_{|_{t=b}}
-
t^\beta \partial_t^\beta P_t^\nu(x,y)_{|_{t=a}}
\Big)
f(y) y^{2\nu+1} \, dy \\
& = T_\beta^\nu(f)(b,x) - T_\beta^\nu(f)(a,x), \quad x \notin \supp f.
\end{align*}
Hence, \eqref{eq:zerosupp} follows.\\

We have all the ingredients to prove the $L^p$-boundedness properties for the operator 
$\mathcal{V}_\rho(\{t^\beta \partial_t^\beta P_t^\nu\}_{t>0})$.\\

According to \cite[Lemma 9]{CR2}, by denoting $m_\nu$ to the measure on $(0,1)$ with density function $x^{2\nu+1}$ with respect to the Lebesgue measure, we have that
\begin{equation*}
m_\nu(B(x,|x-y|))
\lesssim
\left\{
\begin{array}{ll}
x^{2\nu+2},  
& 0 < y \leq x/2,  \\
& \\
(xy)^{\nu+1/2}|x-y|, 
& x/2 < y \leq \min\{1,3x/2\}, \\
& \\
y^{2\nu+2}, & \min\{1,3x/2\} < y \leq 1.
\end{array}
\right.
\end{equation*}
By using Lemmas \ref{Prop:2.1} and \ref{Prop:2.2}
we obtain
\begin{equation*}
\|t^\beta \partial_t^\beta P_t^\nu(x,y)\|_{\rho}  
\lesssim
\frac{1}{m_\nu(B(x,|x-y|))}, \quad x,y \in (0,1),
\end{equation*}
and for 
$0<x/2<y<\min\{1,3x/2\}$
\begin{equation*}
\|\partial_x[t^\beta \partial_t^\beta P_t^\nu(x,y)]\|_{\rho}  
+
\|\partial_y[t^\beta \partial_t^\beta P_t^\nu(x,y)]\|_{\rho}  
\lesssim 
\frac{1}{|x-y|m_\nu(B(x,|x-y|))}.
\end{equation*}
Then, by applying the local Calder\'on-Zygmund theorem in the measure space $((0,1),m_\nu)$ we conclude that the local operator $\mathbb{T}_{\beta,loc}^\nu$ defined by
\begin{equation*}
\mathbb{T}_{\beta,loc}^\nu  
(f)(x)
:=
\int_{x/2}^{\min\{1,3x/2\}}
H_\beta^\nu(x,y) f(y) y^{2\nu+1} dy, \quad x \in (0,1),
\end{equation*}
is bounded from 
$L^p((0,1),x^{2\nu+1}dx)$ into 
$L^p_{E_\rho}((0,1),x^{2\nu+1}dx)$, for every 
$1<p<\infty$, and from 
$L^1((0,1),x^{2\nu+1}dx)$ into 
$L^{1,\infty}_{E_\rho}((0,1),x^{2\nu+1}dx)$.
Also, by defining
\begin{equation*}
T_{\beta,loc}^\nu  
(f)(t,x)
:=
\int_{x/2}^{\min\{1,3x/2\}}
t^\beta \partial_t^\beta P_t^\nu(x,y) f(y) y^{2\nu+1} dy, \quad x \in (0,1), \quad t>0,
\end{equation*}
and since
\begin{equation*}
\|T_{\beta,loc}^\nu(f)(\cdot,x)\|_\rho    
=
\|\mathbb{T}_{\beta,loc}^\nu(f)(x)\|_\rho,
\quad x \in (0,1),
\end{equation*}
we conclude that 
$T_{\beta,loc}^\nu$ is bounded from 
$L^p((0,1),x^{2\nu+1}dx)$ into 
$L^p_{E_\rho}((0,1),x^{2\nu+1}dx)$, for every 
$1<p<\infty$, and from 
$L^1((0,1),x^{2\nu+1}dx)$ into 
$L^{1,\infty}_{E_\rho}((0,1),x^{2\nu+1}dx)$.\\

We now consider the following Hardy-type operators
\begin{equation*}
H_0^\nu(g)(x)
:=
\frac{1}{x^{2\nu+2}}
\int_0^x y^{2\nu+1} g(y) \, dy, \quad x \in (0,\infty),
\end{equation*}
and
\begin{equation*}
H_\infty(g)(x)
:=
\int_x^\infty \frac{g(y)}{y} \, dy, \quad x \in (0,\infty).
\end{equation*}
According to \cite[Lemmas 1 and 2]{BHNV} the operators $H_0^\nu$ and $H_\infty$ are bounded from 
$L^p((0,1),x^{2\nu+1}dx)$ into itself, for every $1<p<\infty$, and from $L^1((0,1),x^{2\nu+1}dx)$ into $L^{1,\infty}((0,1),x^{2\nu+1}dx)$.\\

We define the operator
\begin{equation*}
T_{\beta,glob}^\nu  
(f)(t,x)
:=
\int_{(0,1)\setminus(x/2,\min\{1,3x/2\})}
t^\beta \partial_t^\beta P_t^\nu(x,y) f(y) y^{2\nu+1} dy, \quad x \in (0,1), \quad t>0,
\end{equation*}
By using Lemma \ref{Prop:2.1} 
and the $L^p$-boundedness properties of the Hardy-type operators $H_0^\nu$ and $H^\infty$ we deduce that $T_{\beta,glob}^\nu$ is bounded from 
$L^p((0,1),x^{2\nu+1}dx)$ into 
$L^p_{E_\rho}((0,1),x^{2\nu+1}dx)$,
$1<p<\infty$, and from 
$L^1((0,1),x^{2\nu+1}dx)$ into 
$L^{1,\infty}_{E_\rho}((0,1),x^{2\nu+1}dx)$.\\

Furthermore, since 
$
T_{\beta}^\nu
=
T_{\beta,loc}^\nu
+
T_{\beta,glob}^\nu$
we conclude that 
$T_{\beta}^\nu$
is bounded from 
$L^p((0,1),$ $x^{2\nu+1}dx)$ into 
$L^p_{E_\rho}((0,1),x^{2\nu+1}dx)$,
$1<p<\infty$, and from 
$L^1((0,1),x^{2\nu+1}dx)$ into 
$L^{1,\infty}_{E_\rho}((0,1),x^{2\nu+1}dx)$.
This finishes the proof of Theorem \ref{Th:1.1}, $i)$.

\subsection{The $\lambda$-jump operator
$\lambda [\Lambda(\{t^\beta \partial_t^\beta \mathcal{P}_t^\nu\}_{t>0},\lambda)]^{1/p}$}
\quad \\

On the other hand, recalling that
(see \cite[p. 6712]{JSW})
\begin{equation*}
\lambda [\Lambda(\{t^\beta \partial_t^\beta P_t^\nu\}_{t>0},\lambda)]^{1/p}
\lesssim
\mathcal{V}_\rho(\{t^\beta \partial_t^\beta P_t^\nu\}_{t>0})(f),
\quad \lambda>0,
\end{equation*}
the $L^p$-boundedness properties of the $\lambda$-jump operator
$\lambda [\Lambda(\{t^\beta \partial_t^\beta P_t^\nu\}_{t>0},\lambda)]^{1/p}$
can be deduced from the corresponding ones for the variation operator
$\mathcal{V}_\rho(\{t^\beta \partial_t^\beta P_t^\nu\}_{t>0})$.\\

\subsection{The oscillation operator 
$\mathcal{O}(\{t^\beta \partial_t^\beta P_t^\nu\}_{t>0},$ $\{t_j\}_{j \in \NN})$}
\quad \\

We now establish the $L^p$-boundedness properties for the oscillation operator 
$\mathcal{O}(\{t^\beta \partial_t^\beta P_t^\nu\}_{t>0},$ $\{t_j\}_{j \in \NN})$, where $\{t_j\}_{j \in \NN}$ is a decreasing sequence in $(0,\infty)$.\\

We can write, for $x, y \in (0,1)$ and $j \in \NN$,
\begin{align*}
\sup_{t_{j+1} 
\leq \varepsilon_{j+1}
< \varepsilon_j
\leq t_j}    
\Big|
t^\beta \partial_t^\beta P_t^\nu(x,y)_{|_{t=\varepsilon_j}}
-
t^\beta \partial_t^\beta P_t^\nu(x,y)_{|_{t=\varepsilon_{j+1}}}
\Big|
\leq
\int_{t_{j+1}}^{t_j}
|\partial_t[t^\beta \partial_t^\beta P_t^\nu(x,y)]| \, dt.
\end{align*}
Then, it follows that
\begin{equation*}
\mathcal{O}(\{t^\beta \partial_t^\beta P_t^\nu\}_{t>0},\{t_j\}_{j \in \NN})
\leq
\int_0^\infty
|\partial_t[t^\beta \partial_t^\beta P_t^\nu(x,y)]| \, dt, \quad
x, y \in (0,1).
\end{equation*}
We also have, for every $x, y \in (0,1)$,
\begin{equation*}
\mathcal{O}(\{t^\beta \partial_t^\beta \partial_xP_t^\nu\}_{t>0},\{t_j\}_{j \in \NN})
\leq
\int_0^\infty
|\partial_t[t^\beta \partial_t^\beta \partial_xP_t^\nu(x,y)]| \, dt,
\end{equation*}
and
\begin{equation*}
\mathcal{O}(\{t^\beta \partial_t^\beta \partial_yP_t^\nu\}_{t>0},\{t_j\}_{j \in \NN})
\leq
\int_0^\infty
|\partial_t[t^\beta \partial_t^\beta \partial_yP_t^\nu(x,y)]| \, dt.
\end{equation*}
Therefore, by using \eqref{eq:I3}
and Minkowski inequality we obtain,
for $x\in (0,1)$,
\begin{align}\label{eq:I4}
\mathcal{O}(\{t^\beta \partial_t^\beta P_t^\nu\}_{t>0},\{t_j\}_{j \in \NN})(f)(x)
&\lesssim
\sum_{k_1+2k_2=m+1}
\int_0^\infty
\frac{z^{m-\beta-1}}{(1+z)^m}
\int_0^\infty e^{-v} v^{k_1+k_2-3/2} \nonumber \\
& \qquad \times
\mathcal{O}\Big(\{W_t^\nu\}_{t>0},
\Big\{
t_j^2 \frac{(1+z)^2}{4v}\Big\}_{j \in \NN}\Big)(f)(x)
\, dv \, dz. 
\end{align}
According to \cite[Theorem 3.3]{JR}, for every decreasing sequence $\{s_j\}_{j \in \NN}$ in $(0,\infty)$ and  $f \in L^2((0,1),x^{2\nu+1}(\phi_1^\nu(x))^2dx)$, we have that
\begin{align*}
\|
\mathcal{O}(\{\WW_t^\nu\}_{t>0},\{s_j\}_{j \in \NN})(f)
\|_{L^2((0,1),x^{2\nu+1}(\phi_1^\nu(x))^2dx)} 
\lesssim
\|f\|_{L^2((0,1),x^{2\nu+1}(\phi_1^\nu(x))^2dx)},
\end{align*}
 where the constant does not depend on 
$\{s_j\}_{j \in \NN}$.\\

Let $\{s_j\}_{j \in \NN}$ be a decreasing sequence in $(0,\infty)$.
By proceeding as in the variation operator case we get,
for $x \in (0,1)$,
\begin{align*}
\mathcal{O}(\{W_t^\nu\}_{t>0},\{s_j\}_{j \in \NN})(f)(x)
& 
\lesssim
\mathcal{O}(\{\widetilde{W}_t^\nu\}_{t>0},\{s_j\}_{j \in \NN})(f)(x)
+
\widetilde{W}_*^\nu(f)(x) \\
& 
\lesssim
\phi_1^\nu(x)
\mathcal{O}(\{\mathbb{W}_t^\nu\}_{t>0},\{s_j\}_{j \in \NN})
( f/\phi_1^\nu)(x)
+
\widetilde{W}_*^\nu(f)(x),
\end{align*}
where the constant does not depend on 
$\{s_j\}_{j \in \NN}$.
Hence, there exists $C>0$ independent of 
$\{s_j\}_{j \in \NN}$ such that
\begin{align*}
\|
\mathcal{O}(\{W_t^\nu\}_{t>0},\{s_j\}_{j \in \NN})(f)
\|_{L^2((0,1),x^{2\nu+1}dx)} 
\leq 
C
\|f\|_{L^2((0,1),x^{2\nu+1}dx)},
\end{align*}
for $f \in L^2((0,1),x^{2\nu+1}dx)$.\\

Minkowski inequality and \eqref{eq:I4} lead to
\begin{align*}
& \|
\mathcal{O}(\{t^\beta \partial_t^\beta P_t^\nu\}_{t>0},\{t_j\}_{j \in \NN})(f)
\|_{L^2((0,1),x^{2\nu+1}dx)}    \\
& \qquad \lesssim
\sum_{k_1+2k_2=m+1}
\int_0^\infty
\frac{z^{m-\beta-1}}{(1+z)^m}
\int_0^\infty e^{-v} v^{k_1+k_2-3/2} \nonumber \\
& \hspace{4cm}  \times
\Big\|\mathcal{O}\Big(\{W_t^\nu\}_{t>0},
\Big\{
t_j^2 \frac{(1+z)^2}{4v}\Big\}_{j \in \NN}\Big)(f)\Big\|_{L^2((0,1),x^{2\nu+1}dx)}
\, dv \, dz \\
& \qquad \lesssim
\|f\|_{L^2((0,1),x^{2\nu+1}dx)}, \quad
f \in L^2((0,1),x^{2\nu+1}dx).
\end{align*}

We now consider the space $E$ that consists of all those functions $\Psi : (0,\infty) \longrightarrow \CC$ such that
\begin{equation*}
\|\Psi\|_{E}
:=
\Big( 
\sum_{j=1}^\infty
\sup_{t_{j+1} 
\leq \varepsilon_{j+1}
< \varepsilon_j
\leq t_j}   
|\Psi(\varepsilon_j)
-
\Psi(\varepsilon_{j+1})|^2
\Big)^{1/2}
< \infty.
\end{equation*}
Proceeding similarly to the analysis of
$\mathcal{V}_\rho(\{t^\beta \partial_t^\beta P_t^\nu\}_{t>0})$, by replacing the space $E_\rho$ by $E$ we can show the $L^p$-boundedness properties for the oscillation operator 
$\mathcal{O}(\{t^\beta \partial_t^\beta P_t^\nu\}_{t>0},\{t_j\}_{j \in \NN})$. \\

\subsection{The short variation operator
$\mathcal{S}_V(\{t^\beta \partial_t^\beta P_t^\nu\}_{t>0})$}
\quad \\

We now study the short variation operator
$\mathcal{S}_V(\{t^\beta \partial_t^\beta P_t^\nu\}_{t>0})$. Let $k \in \ZZ$.
Observe that for $x \in (0,1)$,
\begin{align*}
& \mathcal{V}_k(\{t^\beta \partial_t^\beta P_t^\nu\}_{t>0})(f)(x) \\
& \qquad 
=
\sup_{
\substack{2^{-k} < \varepsilon_\ell < \varepsilon_{\ell-1} < \dots < \varepsilon_1 <2^{-k+1} \\
\ell \in \NN}}
\Big(
\sum_{j=1}^{\ell-1}
\Big| 
t^\beta \partial_t^\beta P_t^\nu(f)(x)_{|_{t=\varepsilon_j}}
-
t^\beta \partial_t^\beta P_t^\nu(f)(x)_{|_{t=\varepsilon_{j+1}}}
\Big|^2
\Big)^{1/2} \\
& \qquad \leq 
\sup_{
\substack{2^{-k} < \varepsilon_\ell < \varepsilon_{\ell-1} < \dots < \varepsilon_1 <2^{-k+1} \\
\ell \in \NN}}
\sum_{j=1}^{\ell-1}
\Big| 
t^\beta \partial_t^\beta P_t^\nu(f)(x)_{|_{t=\varepsilon_j}}
-
t^\beta \partial_t^\beta P_t^\nu(f)(x)_{|_{t=\varepsilon_{j+1}}}
\Big| \\
&  \qquad \leq 
\sup_{
\substack{2^{-k} < \varepsilon_\ell < \varepsilon_{\ell-1} < \dots < \varepsilon_1 <2^{-k+1} \\
\ell \in \NN}}
\sum_{j=1}^{\ell-1}
\int_{\varepsilon_{j+1}}^{\varepsilon_{j}}
| \partial_t[
t^\beta \partial_t^\beta P_t^\nu(f)(x)]
| \, dt\\
&  \qquad \leq 
\int_{2^{-k}}^{2^{-k+1}}
| 
t^{\beta-1} \partial_t^\beta P_t^\nu(f)(x)
| \, dt
+
\int_{2^{-k}}^{2^{-k+1}}
| 
t^{\beta} \partial_t^{\beta+1} P_t^\nu(f)(x)
| \, dt \\
&  \qquad \leq 
\Big( 
\int_{2^{-k}}^{2^{-k+1}}
\frac{dt}{t}\Big)^{1/2}
\Big[
\Big(
\int_{2^{-k}}^{2^{-k+1}}
| 
t^{\beta} \partial_t^\beta P_t^\nu(f)(x)
|^2 \frac{dt}{t} \Big)^{1/2} \\
& \hspace{4cm} +
\Big(
\int_{2^{-k}}^{2^{-k+1}}
| 
t^{\beta+1} \partial_t^{\beta+1} P_t^\nu(f)(x)
|^2 \frac{dt}{t} \Big)^{1/2}
\Big] \\
&  \qquad \leq 
C
\Big[
\Big(
\int_{2^{-k}}^{2^{-k+1}}
| 
t^{\beta} \partial_t^\beta P_t^\nu(f)(x)
|^2 \frac{dt}{t} \Big)^{1/2}  +
\Big(
\int_{2^{-k}}^{2^{-k+1}}
| 
t^{\beta+1} \partial_t^{\beta+1} P_t^\nu(f)(x)
|^2 \frac{dt}{t} \Big)^{1/2}
\Big].
\end{align*}
Here $C>0$ does not depend on $k$.\\

Then,
\begin{align}\label{eq:I5}
\mathcal{S}_V(\{t^\beta \partial_t^\beta P_t^\nu\}_{t>0})
& =
\Big( 
\sum_{k=-\infty}^\infty
|
\mathcal{V}_k(\{t^\beta \partial_t^\beta P_t^\nu\}_{t>0})(f)(x)
|^2
\Big)^{1/2} \nonumber \\
& \lesssim
g_{\beta}^\nu(f)(x)
+
g_{\beta+1}^\nu(f)(x), \quad x \in (0,1),
\end{align}
where, for every $\gamma>0$, the
$\gamma$-Littlewood-Paley-Stein function
$g_\gamma^\nu$ is defined by
\begin{equation*}
g_\gamma^\nu(f)(x)
:= 
\Big(
\int_{0}^{\infty}
| 
t^{\gamma} \partial_t^\gamma P_t^\nu(f)(x)
|^2 \frac{dt}{t} \Big)^{1/2},
\quad x \in (0,1).
\end{equation*}
In \cite{CR2} Littlewood-Paley-Stein functions associated with Fourier-Bessel expansions were considered. They defined, for every $k \in \NN$,
\begin{equation*}
G_k^\nu(f)(x)
:= 
\Big(
\int_{0}^{\infty}
| 
t^{k} \partial_t^k \widetilde{P}_t^\nu(f)(x)
|^2 \frac{dt}{t} \Big)^{1/2},
\quad x \in (0,1),
\end{equation*}
where 
\begin{equation*}
\widetilde{P}_t^\nu(f)
:=
\sum_{n=1}^\infty e^{-tn} c_n^\nu(f)\phi_n^\nu,
\quad t>0.
\end{equation*}
Note that the Poisson semigroups 
$\{\widetilde{P}_t^\nu\}_{t>0}$
and 
$\{P_t^\nu\}_{t>0}$
are not the same.\\

We now establish the $L^p$-boundedness properties for $g_\gamma^\nu$, $\gamma>0$.
Our result can be seen as an extension of \cite[Theorem 1]{CR2}. We will prove it by using again \cite[Theorem 1]{MSZ}.
Our procedure is different than the one used in \cite{CR2} (see Remark \ref{Rem:2.1}).

\begin{Prop}\label{Prop:2.7}
Let $\gamma>0$ and $\nu>-1$. Then, the operator $g_\gamma^\nu$ is  
bounded from 
$L^p((0,1),x^{2 \nu + 1}dx)$ into itself, for
every $1<p<\infty$, and from $L^1((0,1),$ $x^{2 \nu + 1}dx)$ into 
$L^{1,\infty}((0,1),x^{2 \nu + 1}dx)$.
\end{Prop}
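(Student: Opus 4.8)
The plan is to realize $g_\gamma^\nu$ as a vector-valued Calder\'on-Zygmund operator with values in the Hilbert space $\HH:=L^2((0,\infty),dt/t)$, and then to run the very same local/global splitting used for $\mathcal{V}_\rho(\{t^\beta\partial_t^\beta P_t^\nu\}_{t>0})$ in the proof of Theorem \ref{Th:1.1}, $i)$, with $E_\rho$ replaced by $\HH$. Writing $G_\gamma^\nu(f)(x)(t):=t^\gamma\partial_t^\gamma P_t^\nu(f)(x)$ we have $g_\gamma^\nu(f)(x)=\|G_\gamma^\nu(f)(x)\|_{\HH}$, with associated $\HH$-valued kernel $K_\gamma^\nu(x,y)(t):=t^\gamma\partial_t^\gamma P_t^\nu(x,y)$. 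Once the $L^2$-boundedness and the standard size and regularity estimates for $K_\gamma^\nu$ are established, the conclusion follows from the local Calder\'on-Zygmund theorem in the space of homogeneous type $((0,1),m_\nu)$ together with the $L^p$-boundedness of the Hardy-type operators $H_0^\nu$ and $H_\infty$, exactly as before.

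For the $L^2$-boundedness I would argue by the spectral theorem for the self-adjoint operator $\sqrt{\Delta_\nu}$ rather than by comparison with a diffusion semigroup, which is cleaner here. Interpreting the Weyl derivative termwise one has, up to an $n$-independent sign, $\partial_t^\gamma P_t^\nu(f)=\sum_{n}(-1)^{m+1}\lambda_{n,\nu}^\gamma e^{-t\lambda_{n,\nu}}c_n^\nu(f)\phi_n^\nu$ with $m-1\le\gamma<m$; hence by Parseval's identity in $L^2((0,1),x^{2\nu+1}dx)$ and Fubini,
\begin{equation*}
\|g_\gamma^\nu(f)\|_{L^2((0,1),x^{2\nu+1}dx)}^2
=\sum_{n=1}^\infty|c_n^\nu(f)|^2\int_0^\infty t^{2\gamma}\lambda_{n,\nu}^{2\gamma}e^{-2t\lambda_{n,\nu}}\frac{dt}{t}
=\frac{\Gamma(2\gamma)}{2^{2\gamma}}\|f\|_{L^2((0,1),x^{2\nu+1}dx)}^2,
\end{equation*}
since the change of variables $u=2t\lambda_{n,\nu}$ makes the inner integral equal to $\Gamma(2\gamma)2^{-2\gamma}$, independently of $n$. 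Thus $g_\gamma^\nu$ is, up to a constant, an isometry on $L^2$, which is the required starting point.

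The core of the argument is the verification of the kernel estimates in the $\HH$-norm, and here the heat-kernel bound \eqref{eq:2.1} and the gradient bound \eqref{eq:1.2} do all the work, just as in Lemmas \ref{Prop:2.1} and \ref{Prop:2.2}. Expanding $t^\gamma\partial_t^\gamma P_t^\nu(x,y)$ as in \eqref{eq:I3}, Minkowski's integral inequality in $\HH$ reduces $\|K_\gamma^\nu(x,y)\|_{\HH}$ to a superposition of terms $\big(\int_0^\infty|W_{t^2(1+z)^2/(4v)}^\nu(x,y)|^2\,dt/t\big)^{1/2}$; the substitution $s=t^2(1+z)^2/(4v)$ shows that each of these equals $\tfrac1{\sqrt2}\big(\int_0^\infty|W_s^\nu(x,y)|^2\,ds/s\big)^{1/2}$, \emph{independently} of $z,v$, whose remaining weights integrate to a constant. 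One is thus left to estimate $\int_0^\infty|W_s^\nu(x,y)|^2\,ds/s$ and, for the derivatives, $\int_0^\infty|\partial_x W_s^\nu(x,y)|^2\,ds/s$. Feeding \eqref{eq:2.1} and \eqref{eq:1.2} into these integrals and splitting $(0,1)$ into the three regions $0<y\le x/2$, $x/2<y\le\min\{1,3x/2\}$ and $\min\{1,3x/2\}<y\le1$ exactly as in Lemmas \ref{Prop:2.1} and \ref{Prop:2.2} yields
\begin{equation*}
\|K_\gamma^\nu(x,y)\|_{\HH}\lesssim\frac{1}{m_\nu(B(x,|x-y|))}
\quad\text{and}\quad
\|\partial_x K_\gamma^\nu(x,y)\|_{\HH}+\|\partial_y K_\gamma^\nu(x,y)\|_{\HH}\lesssim\frac{(xy)^{-\nu-1/2}}{|x-y|^2},
\end{equation*}
which are precisely the standard $\HH$-Calder\'on-Zygmund size and regularity conditions relative to $m_\nu$.

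With these estimates in hand, the proof concludes exactly as for the variation operator: split $g_\gamma^\nu$ into its local part (integration over $x/2<y<\min\{1,3x/2\}$) and its global part, treat the local part by the local Calder\'on-Zygmund theorem in $((0,1),m_\nu)$ using the $L^2$-bound and the size and smoothness estimates above, and control the global part by Lemma \ref{Prop:2.1} together with the boundedness of $H_0^\nu$ and $H_\infty$. I expect the only genuinely new point to be the reduction of the $L^2(dt/t)$ kernel norms to the scalar integrals $\int_0^\infty|W_s^\nu|^2\,ds/s$ and $\int_0^\infty|\partial_x W_s^\nu|^2\,ds/s$ via the scaling $s=t^2(1+z)^2/(4v)$; everything downstream is a verbatim repetition of the three-region computations already carried out, now merely with squares under the integral sign.
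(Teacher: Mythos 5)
Your proposal is correct and follows essentially the same route as the paper's proof of Proposition \ref{Prop:2.7}: the paper also realizes $g_\gamma^\nu$ as a Calder\'on--Zygmund operator with values in $L^2((0,\infty),dt/t)$, obtains the $L^2$ bound from the same termwise Weyl-derivative/Parseval computation (with the same constant $\Gamma(2\gamma)/2^{2\gamma}$), and concludes via the same local/global splitting together with the Hardy-type operators $H_0^\nu$ and $H_\infty$. The only (harmless) difference is the intermediate reduction of the kernel norms: the paper bounds $|\partial_t^\gamma P_t^\nu(x,y)|$ pointwise by a Gaussian-in-$t$ superposition of $|W_u^\nu(x,y)|$ and applies Minkowski to reach the quantity $\int_0^\infty |W_u^\nu(x,y)|\,du/u$, whereas you use the expansion \eqref{eq:I3} at kernel level and an exact scaling to reach $\bigl(\int_0^\infty |W_s^\nu(x,y)|^2\,ds/s\bigr)^{1/2}$; feeding \eqref{eq:2.1} and \eqref{eq:1.2} into either quantity yields the same three-region estimates, so both variants close the argument.
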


\begin{proof}
Since (see 
\cite[p. 111]{BS1}
and
\cite[pp. 104 and 122]{Leb})
\begin{equation*}
d_{n,\nu}
=
\sqrt{\pi}
\Big( 1 + O\Big(\frac{1}{n}\Big)  \Big),
\quad n \in \NN,
\end{equation*}
\begin{equation*}
J_\nu(z)
\sim
\frac{z^\nu}{2^{\nu} \Gamma(\nu+1)}, \quad
z \to 0^+
\end{equation*}
and
\begin{equation*}
J_\nu(z)
=
O\Big( \frac{1}{\sqrt{z}}\Big), \quad
z \to  + \infty,
\end{equation*}
we get
\begin{equation*}
|\phi_n^\nu(x)|
\lesssim
\Big( 
\lambda_{n,\nu}^{\nu+1/2}
+
x^{-\nu-1/2}
\Big),
\quad n \in \NN, \quad x \in (0,1).
\end{equation*}

{ Recall also (see \cite[p. 618]{Wa}) that
 \begin{equation}\label{eq:lambdannu}
 \lambda_{n,\nu}  =
 \pi \Big( n+\frac{\nu}{2}-\frac{1}{4}\Big)
 +
 O\Big( \frac{1}{n}\Big),
 \quad n \in \mathbb{N}.
 \end{equation}}
Then, the differentiation under the summation sign is justified and we can obtain
\begin{equation*}
\partial_t^k P_t^\nu(f)(x)
= 
\sum_{n=1}^\infty 
e^{- \lambda_{n,\nu}t} 
(- \lambda_{n,\nu})^k
c_n^\nu(f)
\phi_n^\nu(x), \quad
x \in (0,1), \quad t>0,
\end{equation*}
for every 
$f \in L^2((0,1),x^{2 \nu + 1}dx)$ and $k \in \NN$.\\

Let $f \in L^2((0,1),x^{2 \nu + 1}dx)$ .
We choose $m \in \NN$ such that 
$m-1\leq \gamma <m$ and write
\begin{align*}
\partial_t^\gamma P_t^\nu(f)(x)    
& = 
\frac{-1}{\Gamma(m-\gamma)}
\int_0^\infty 
\partial_t^m 
P_{t+s}^\nu(f)(x) s^{m-\gamma-1} \, ds \\
& = 
\frac{-1}{\Gamma(m-\gamma)}
\sum_{n=1}^\infty 
(- \lambda_{n,\nu})^m
c_n^\nu(f)
\phi_n^\nu(x)
\int_0^\infty 
e^{- \lambda_{n,\nu}(t+s)} 
 s^{m-\gamma-1} \, ds \\
& = (-1)^{m+1}
\sum_{n=1}^\infty 
(\lambda_{n,\nu})^\gamma
c_n^\nu(f)
\phi_n^\nu(x)
e^{- \lambda_{n,\nu}t}, \quad
x \in (0,1), \quad t>0.
\end{align*}
We have that
\begin{align*}
\|g_\gamma^\nu(f)\|_{L^2((0,1),x^{2 \nu + 1}dx)}^2 
& =
\int_0^\infty 
\int_0^\infty 
|t^\gamma \partial_t^\gamma P_t^\nu(f)(x) |^2 
\frac{dt}{t}
x^{2\nu+1} \, dx \\
& =
\int_0^\infty 
\int_0^\infty 
\Big|
\sum_{n=1}^\infty 
(t\lambda_{n,\nu})^\gamma
c_n^\nu(f)
e^{- \lambda_{n,\nu}t}
\phi_n^\nu(x)
\Big|^2 
x^{2\nu+1} \, dx
\frac{dt}{t}\\
& =
\sum_{n=1}^\infty 
|c_n^\nu(f)|^2
\int_0^\infty
(t\lambda_{n,\nu})^{2\gamma}
e^{- 2\lambda_{n,\nu}t}
\frac{dt}{t}\\
& =
\frac{\Gamma(2\gamma)}{2^{2\gamma}}
\sum_{n=1}^\infty 
|c_n^\nu(f)|^2 
=
\frac{\Gamma(2\gamma)}{2^{2\gamma}}
\|f\|_{L^2((0,1),x^{2 \nu + 1}dx)}^2.
\end{align*}
In order to prove our result we use vector-valued Calder\'on-Zygmund theory (\cite{RuFRT1,RuFRT2}).\\

We consider the operator $\mathcal{G}_\gamma^\nu$ defined by
\begin{equation*}
\mathcal{G}_\gamma^\nu(f)(t,x)
:= 
t^\gamma \partial_t^\gamma P_t^\nu(f)(x),
\quad x \in (0,1), \quad t>0.
\end{equation*}
We have just proved that $\mathcal{G}_\gamma^\nu$ is bounded from $L^2((0,1),x^{2 \nu + 1}dx)$ into
$L^2_{L^2((0,\infty),dt/t)}((0,1),$ $x^{2 \nu + 1}dx)$.\\

Define
\begin{equation*}
K_\gamma^\nu(t,x,y)
:=
t^\gamma \partial_t^\gamma P_t^\nu(x,y),
\quad x,y \in (0,1), \quad t>0.
\end{equation*}
We are going to see that, for any $x,y \in (0,1)$,
\begin{equation}\label{eq:I6}
\|K_\gamma^\nu(\cdot,x,y)\|_{L^2((0,\infty),dt/t)}
\lesssim
\left\{
\begin{array}{ll}
x^{-2(\nu+1)},  
& 0 < y \leq x/2,  \\
& \\
(xy)^{-\nu-1/2}|x-y|^{-1}, 
& x/2 < y \leq \min\{1,3x/2\},  \:x\neq y, \\
& \\
y^{-2(\nu+1)}, & 
\min\{1,3x/2\} < y \leq 1.
\end{array}
\right.
\end{equation}
We choose $m \in \NN$ such that $m-1 \leq \gamma <m$.
According to \eqref{eq:2.1.1} we can write
\begin{equation*}
\partial_t^m P_t^\nu(x,y)
=
\frac{1}{2\sqrt{\pi}}
\int_0^\infty 
\frac{\partial_t^m[te^{-t^2/(4u)}]}{u^{3/2}}
W_u^\nu(x,y) \, du, \quad 
x,y \in (0,1), \quad t>0.
\end{equation*}
Then, for $x,y \in (0,1)$ and $t>0$,
\begin{equation*}
\partial_t^\gamma P_t^\nu(x,y)
=
\frac{-1}{2\sqrt{\pi}\Gamma(m-\beta)}
\int_0^\infty 
\int_0^\infty 
\frac{\partial_v^m[ve^{-v^2/(4u)}]_{|_{v=t+s}}}{u^{3/2}}
W_u^\nu(x,y) \, du
s^{m-\gamma-1} \, ds.
\end{equation*}
By \cite[Lemma 3]{BCCFR} we get
\begin{align*}
|\partial_t^\gamma P_t^\nu(x,y)|  
& \lesssim
\int_0^\infty 
\int_0^\infty 
\frac{|\partial_v^m[ve^{-v^2/(4u)}]|_{|_{v=t+s}}}{u^{3/2}}
|W_u^\nu(x,y)| \, du
s^{m-\gamma-1} \, ds \\
& \lesssim
\int_0^\infty 
|W_u^\nu(x,y)|
\int_0^\infty 
\frac{e^{-(t+s)^2/(8u)}}{u^{(m+2)/2}}
s^{m-\gamma-1} \, ds \, du \\
& \lesssim
\int_0^\infty 
|W_u^\nu(x,y)|
\frac{e^{-ct^2/u}}{u^{(m+2)/2}}
\int_0^\infty 
e^{-cs^2/u} 
s^{m-\gamma-1} \, ds \, du \\
& \lesssim
\int_0^\infty 
|W_u^\nu(x,y)|
\frac{e^{-ct^2/u}}{u^{(\gamma+2)/2}}
\, du,
\quad x,y \in (0,1), \quad t>0.
\end{align*}
Minkowski inequality leads to
\begin{align*}
\|K_\gamma^\nu(\cdot,x,y)\|_{L^2((0,\infty),dt/t)} 
& \lesssim
\int_0^\infty 
\frac{|W_u^\nu(x,y)|}{u^{(\gamma+2)/2}}
\Big(
\int_0^\infty
t^{2\gamma-1}
e^{-ct^2/u}
\Big)^{1/2}
\, du \\
& \lesssim
\int_0^\infty 
\frac{|W_u^\nu(x,y)|}{u}
\, du,
\quad x,y \in (0,1).
\end{align*}
As in Lemma \ref{Prop:2.1} we can now establish \eqref{eq:I6}.\\

Next, by proceeding as in Lemma \ref{Prop:2.2} we obtain,
for $x,y \in (0,1)$, $x\neq y$,
\begin{align*}
\|\partial_x K_\gamma^\nu(\cdot,x,y)\|_{L^2((0,\infty),dt/t)}     
+
\|\partial_y K_\gamma^\nu(\cdot,x,y)\|_{L^2((0,\infty),dt/t)} 
\lesssim
\frac{(xy)^{-\nu-1/2}}{|x-y|^2}.
\end{align*}
We define the operator
\begin{equation*}
T_\gamma^\nu(f)(x)
:=
\int_0^1 \mathbb{K}_\gamma^\nu(x,y)
f(y) y^{2\nu+1} \, dy, \quad x \in (0,1),
\end{equation*}
where for every $x,y \in (0,1)$, $x \neq y$,
\begin{equation*}
\begin{array}{rrccl}
\mathbb{K}_\gamma^\nu(x,y) & : & (0,\infty) & \longrightarrow & \mathbb{R}   \\
&& t & \longmapsto &
K_\gamma^\nu(t,x,y),
\end{array}
\end{equation*}
and the integral is understood in the 
$L^2((0,\infty),dt/t)$-B\"ochner sense.\\

Note that if
$f \in 
L^\infty((0,1),x^{2\nu+1}dx)
=
L^\infty((0,1),dx)
$,
from \eqref{eq:I6} it follows that
\begin{equation*}
\int_{0}^1 
\|K_\gamma^\nu(x,y)\|_{L^2((0,\infty),dt/t)} \, |f(y)|
\, dy    
\lesssim
\int_0^1 \frac{|f(y)|}{|x-y|} \, dy
< \infty, 
\quad x \in (0,1) \setminus \supp f.
\end{equation*}
Assume that 
$f \in 
L^\infty((0,1),dx)
$
with compact support on $(0,1)$ and
$g \in L^2((0,\infty),$ $dt/t)$. By using again \eqref{eq:I6} we get
\begin{align*}
& \int_0^\infty
\int_0^1 |K_\gamma^\nu(t,x,y)| \,
|f(y)| \, |g(t)| y^{2\nu+1}\, dy \, \frac{dt}{t} \\
& \qquad \leq 
\|g\|_{L^2((0,\infty),dt/t)}
\int_0^1 
\|K_\gamma^\nu(\cdot,x,y)\|_{L^2((0,\infty),dt/t)} \,
|f(y)| y^{2\nu+1} \, dy \\
& \qquad \lesssim
\|g\|_{L^2((0,\infty),dt/t)}
\int_{\supp f}
{|f(y)|} 
\, dy 
< \infty,
\quad x \in (0,1) \setminus \supp f.
\end{align*}
Then, by taking into account properties of the B\"ochner integral we can write
\begin{align*}
& \int_0^\infty    
g(t) [T_\gamma^\nu(f)(x)](t) \, dt
 =
\int_0^1 \int_0^\infty 
K_\gamma^\nu(t,x,y)
g(t)
\frac{dt}{t}
f(y)
y^{2\nu+1}
\, dy\\
& \qquad = 
\int_0^\infty 
g(t)
\int_0^1 
K_\gamma^\nu(t,x,y)
f(y)
y^{2\nu+1}
\, dy
\frac{dt}{t} ,
\quad x \in (0,1) \setminus \supp f.
\end{align*}
Hence,
\begin{equation*}
[T_\gamma^\nu(f)(x)](t)
=
t^\gamma \partial_t^\gamma P_t^\nu(f)(x),
\quad x \in (0,1) \setminus \supp f,
\end{equation*}
in the sense of equality in 
$L^2((0,\infty),dt/t)$.\\

By using now vector-valued 
Calder\'on-Zygmund theory and 
$L^p$-boundedness properties of the Hardy type operators $H_0^\nu$ and $H_\infty$ we can finish the proof of this proposition by proceeding as in the variation operator  
$\mathcal{V}_\rho$ case.
\end{proof}

\begin{Rem}\label{Rem:2.6}
Since $\{W_t^\nu\}_{t>0}$ is not a symmetric diffusion semigroup, 
Proposition \ref{Prop:2.7}
cannot be seen as a special case of 
\cite[Theorems 1.1 and 1.2]{TZ}.
\end{Rem}

\begin{proof}[Proof of Theorem \ref{Th:1.1}, $iv)$]
From \eqref{eq:I5} and 
Proposition \ref{Prop:2.7} we can deduce that the short variation operator 
$\mathcal{S}_V(\{t^\beta \partial_t^\beta P_t^\nu\}_{t>0})$
is bounded from 
$L^p((0,1),x^{2 \nu + 1}dx)$ into itself, for
every $1<p<\infty$, and from $L^1((0,1),x^{2 \nu + 1}dx)$ into 
$L^{1,\infty}((0,1),x^{2 \nu + 1}dx)$.
\end{proof}

\section{Proof of Theorem \ref{Th:1.3}}
\label{Sect3}

For every $n\in\NN$, we have that $\Psi_n^\nu(x)=x^{\nu+1/2}\phi_n^\nu(x),$ $x\in (0,1).$ Then,
$$
\mathcal{P}_t^\nu(f)(x)=\int_0^1\mathcal{P}_t^\nu(x,y)f(y)dy, \:\; x\in (0,1), \text{ and } t>0,
$$
where 
$$\mathcal{P}_t^\nu(x,y)
:=(xy)^{\nu+1/2}P_t^\nu(x,y), \quad 
x,y\in (0,1), \quad t>0.$$

We consider the operator $\mathcal{T}_\beta^\nu$ defined by
\begin{equation*}
\mathcal{T}_\beta^\nu(f)(t,x)
:=
t^\beta \partial_t^\beta \mathcal{P}_t^\nu(f)(x),
\quad x \in (0,1), \quad t>0.
\end{equation*}
Also, we define
\begin{equation*}
{\bf T}_\beta^\nu(f)(x)
:=
\int_0^1 \mathcal{H}_\beta^\nu(x,y) f(y)  dy,
\end{equation*}
where, for every $x,y \in (0,1)$, $x \neq y$,
\begin{equation*}
\begin{array}{rrccl}
\mathcal{H}_\beta^\nu(x,y) & : & (0,\infty) & \longrightarrow & \mathbb{R}   \\
&& t & \longmapsto &
[\mathcal{H}_\beta^\nu(x,y)](t)
:= t^\beta \partial_t^\beta \mathcal{P}_t^\nu(x,y),
\end{array}
\end{equation*}
and the integral is understood in the 
$E_\rho/\sim$ B\"ochner sense.\\

We want  to see that the operator $\mathcal{T}_\beta^\nu$ is a $E_\rho$-Calder\'on-Zygmund operator associated with the kernel $\mathcal{H}_\beta^\nu (x,y).$
 Observe that $\mathcal{H}_\beta^\nu(x,y)=(xy)^{\nu+1/2}{H}_\beta^\nu(x,y)$. Then, by using Lemmas \ref{Prop:2.1} and \ref{Prop:2.2}, we obtain that, for $x,y\in (0,1),$
\begin{equation}\label{I.7}
\|\mathcal{H}_\beta^\nu(x,y)\|_{E_\rho}
\lesssim 
\left\{
\begin{array}{ll}
\dfrac{y^{\nu+1/2}}{x^{\nu+3/2}},      & 0<y \leq x/2, \\
& \\
|x-y|^{-1},     & x/2<y \leq \min\{1,3x/2\},\\
& \\
\dfrac{x^{\nu+1/2}}{y^{\nu+3/2}},      & \min\{1,3x/2\}<y \leq 1,
\end{array}
\right.
\end{equation}
and
\begin{equation}\label{I.8}
      \|\partial_x\mathcal{H}_\beta^\nu(x,y)\|_{E_\rho}+  \|\partial_y\mathcal{H}_\beta^\nu(x,y)\|_{E_\rho}
      \lesssim \frac{1}{|x-y|^2}, \:\;x\neq y.
\end{equation}
By using \eqref{I.7} and 
\eqref{I.8} instead of the estimates in Lemmas \ref{Prop:2.1} and \ref{Prop:2.2}, we can proceed as in the proof of Theorem \ref{Th:1.1} and we can prove the results in Theorem \ref{Th:1.3}. We need to use the $L^p$ boundedness properties of Hardy type operators stated in \cite[Lemmas 1 and 2]{BHNV}.

\begin{Rem}
$L^p$ boundedness properties established in Theorem \ref{Th:1.3} can be seen as a pencil phenomenon like the one described by Mac\'ias, Segovia y Torrea \cite{MST}, see also \cite{NSpencil}.
\end{Rem}

\section{Proof of Theorem \ref{Th:1.2}}\label{S4}

\subsection{Boundedness of 
$\mathcal{V}_\rho(\{  P_t^\nu\}_{t>0})$  from
$H^1((0,1),\Delta_\nu)$ into $L^1((0,1),x^{2 \nu + 1}dx)$} \label{Sect4.1}
\quad \\

An atomic characterization of the space $H^1((0,1),\Delta_\nu)$ was established in \cite[Theorem A]{DPRS} when $\nu>-1/2$ and in \cite[Theorem 4.26]{BDL} when $\nu>-1$.\\

For every $j\in\NN_0:=\NN\cup\{0\}$, we consider the interval $I_j:=(1-2^{-j}, 1-2^{-j-1}].$ 
A complex valued function $a$ is said to be a $\Delta_\nu$-atom when it satisfies one of the following conditions:\\

\begin{itemize}
    \item[(a)]There exists an interval $I\subseteq (0,1)$ such that $\supp a\subseteq I$, $\|a\|_\infty\le m_\nu(I)^{-1}$, and 
    $$\int_Ia(x) x^{2\nu +1}dx=0.$$
    Here, as above, $m_\nu$ denotes the measure on $(0,\infty)$ having $x^{2\nu+1}$ as density function with respect to the Lebesgue measure in $(0,1).$\\
    
    \item[(b)] $a(x)=m_\nu(I_j)^{-1}\chi_{I_j}, $ for some $j\in\NN_0.$\\
    \end{itemize}
    
    In \cite[Theorem A]{DPRS} and \cite[Theorem 4.26]{BDL}  it was proved that $f\in L^1((0,1), x^{2\nu+1}dx)$ is in $H^1((0,1),\Delta_\nu)$ if, and only if, $$f=\sum_{i=0}^\infty \lambda_i a_i
    \quad \text{in} \quad  
    L^1((0,1), x^{2\nu+1}dx)$$
    where, for every $i\in\NN_0$, $a_i$ is a $\Delta_\nu$-atom and $\lambda_i>0,$ satisfying $\sum_{i=0}^\infty \lambda_i<\infty.$ Furthermore, if $f\in  H^1((0,1),\Delta_\nu)$, then
    $$
    \|f\|_{L^1((0,1), x^{2\nu+1}dx)}+\|P_*^\nu f\|_{L^1((0,1), x^{2\nu+1}dx)}
    \sim 
    \inf\sum_{i=0}^\infty \lambda_i,
    $$
    where the infimum is taken over all sequences $\{\lambda_i\}_{i=0}^\infty $ in $(0,\infty)$ such that $f=\sum_{i=0}^\infty\lambda_i a_i$ in $L^1((0,1),x^{2\nu+1}dx)$, where $a_i$ is a $\Delta_\nu$-atom for every $i\in\NN_0.$\\
    Since from Theorem \ref{Th:1.1} we know that $\mathcal{V}_\rho(\{P_t^\nu\}_{t>0})$ is bounded from $L^1((0,1),x^{2\nu+1}dx)$ into $L^{1,\infty}((0,1),x^{2\nu+1}dx)$, in order to see that $\mathcal{V}_\rho(\{P_t^\nu\}_{t>0})$ is bounded from $H^1((0,1),$ $\Delta_\nu)$ into $L^1((0,1),$ $x^{2\nu+1}dx)$ it is enough to show the following property.

\begin{Prop}\label{Prop:4.1}
Let $\rho>2$ and $\nu>-1$.
Then, for every $\Delta_\nu$-atom $a$, we have that
\begin{equation}\label{eq:4.0}
        \| \mathcal{V}_\rho(\{P_t^\nu\}_{t>0})(a)\|_{L^1((0,1),x^{2\nu+1}dx) }
        \lesssim 1,
\end{equation}
where the constant does not depend on $a.$
\end{Prop}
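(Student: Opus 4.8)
The plan is to establish \eqref{eq:4.0} separately for the two families of $\Delta_\nu$-atoms, splitting in each case the integral of $\mathcal{V}_\rho(\{P_t^\nu\}_{t>0})(a)$ over $(0,1)$ into a local part, where $x$ is comparable to the supporting interval $I$ in the homogeneous-type geometry of $((0,1),m_\nu)$, and a global part. Throughout I will use that $\mathcal{V}_\rho(\{P_t^\nu\}_{t>0})$ is bounded on $L^2((0,1),x^{2\nu+1}dx)$ (Proposition \ref{Prop:2.3} together with Lemma \ref{Lem:2.4} for $\beta=0$), and that for $x\notin\supp a$ formula \eqref{eq:zerosupp} gives the pointwise domination
\begin{equation*}
\mathcal{V}_\rho(\{P_t^\nu\}_{t>0})(a)(x)
=\Big\|\int_0^1 H_0^\nu(x,y)\,a(y)\,y^{2\nu+1}\,dy\Big\|_{E_\rho}
\le\int_0^1\|H_0^\nu(x,y)\|_{E_\rho}\,|a(y)|\,y^{2\nu+1}\,dy,
\end{equation*}
where $H_0^\nu(x,y)$ denotes the $E_\rho$-valued kernel $H_\beta^\nu(x,y)$ in the case $\beta=0$.

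For the local part, common to both types of atoms, I would fix a dilate $\tilde I$ of $I$ inside $(0,1)$ adapted to the three regimes of Lemma \ref{Prop:2.1}, and use Cauchy--Schwarz,
\begin{equation*}
\int_{\tilde I}\mathcal{V}_\rho(\{P_t^\nu\}_{t>0})(a)(x)\,x^{2\nu+1}\,dx
\le m_\nu(\tilde I)^{1/2}\,\|\mathcal{V}_\rho(\{P_t^\nu\}_{t>0})(a)\|_{L^2((0,1),x^{2\nu+1}dx)}.
\end{equation*}
The $L^2$-boundedness, the doubling bound $m_\nu(\tilde I)\lesssim m_\nu(I)$, and $\|a\|_{L^2((0,1),x^{2\nu+1}dx)}\le m_\nu(I)^{-1/2}$ (valid for both kinds of atoms since $\|a\|_\infty\le m_\nu(I)^{-1}$) then give a bound $\lesssim 1$. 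For the global part of an atom $a$ of type $(a)$, I would use the moment condition $\int_I a\,y^{2\nu+1}\,dy=0$ to replace $H_0^\nu(x,y)$ by $H_0^\nu(x,y)-H_0^\nu(x,y_0)$, with $y_0$ the centre of $I$; the regularity estimate of Lemma \ref{Prop:2.2} and the mean value theorem then bound the integrand by $|y-y_0|\,(xy_0)^{-\nu-1/2}|x-y_0|^{-2}$. Because the regularity estimate carries the extra power $|x-y|^{-2}$, integrating in $x$ against $x^{2\nu+1}\,dx$ over the global region yields a convergent H\"ormander-type integral bounded independently of $I$, using the ball measures of \cite[Lemma 9]{CR2}; this is exactly the vector-valued Calder\'on--Zygmund argument already carried out in Section \ref{Sect2}.

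The delicate case, which I expect to be the main obstacle, is the global part of an atom of type $(b)$, $a=m_\nu(I_j)^{-1}\chi_{I_j}$, since these carry no cancellation. The atom $I_0=(0,1/2]$ is a single fixed bounded function, so $\mathcal{V}_\rho(\{P_t^\nu\}_{t>0})(a)\in L^2\subseteq L^1$ by $L^2$-boundedness and finiteness of $m_\nu$; the issue is the family $j\ge 1$, for which $I_j\subseteq(1/2,1)$ and hence $y\sim 1$ on $\supp a$. The naive bound using only the size estimate of Lemma \ref{Prop:2.1} fails here: in the comparable regime $x\sim y\sim 1$ the factor $|x-y|^{-1}$, integrated over $\{x\in(2/3,1):|x-y|\gtrsim 2^{-j}\}$, produces a logarithmic factor $\sim j$ that is not uniformly bounded. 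The remedy is to sharpen Lemma \ref{Prop:2.1} in the comparable regime by retaining, rather than discarding, the Dirichlet boundary factor $1\wedge\big((1-x)(1-y)/s\big)$ appearing in \eqref{eq:2.1}; carrying it through the computation of $\int_0^\infty|W_s^\nu(x,y)|\,s^{-1}\,ds$ upgrades the size bound to
\begin{equation*}
\|H_0^\nu(x,y)\|_{E_\rho}
\lesssim\frac{(xy)^{-\nu-1/2}}{|x-y|}\Big(1\wedge\frac{(1-x)(1-y)}{|x-y|^2}\Big),
\quad x/2<y<\min\{1,3x/2\}.
\end{equation*}
With this extra decay, splitting the global region according to whether $x$ is closer to or farther from the boundary than $I_j$ and integrating shows $\int_{\mathrm{global}}\|H_0^\nu(x,y)\|_{E_\rho}\,x^{2\nu+1}\,dx\lesssim 1$ uniformly in $y\in I_j$, after which Fubini and $\|a\|_{L^1((0,1),x^{2\nu+1}dx)}=1$ close the estimate. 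Thus the crux is that proximity to the boundary $y=1$ supplies, through the Dirichlet factor in \eqref{eq:2.1}, precisely the additional decay that cancellation would otherwise provide.
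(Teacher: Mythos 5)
Your local estimates and your treatment of the type (b) atoms $a=m_\nu(I_j)^{-1}\chi_{I_j}$ are sound, and for the latter you have rediscovered exactly the paper's mechanism: in its Case I the paper keeps the factor $1\wedge\frac{(1-x)(1-y)}{u}$ from \eqref{eq:2.1}, extracts $(1-y)\sim 2^{-j}$, and beats the divergent integral $\int_{(0,1)\setminus 3I_j}(1-x)^{-2}dx\lesssim 2^{j}$; your sharpened kernel bound with the Dirichlet factor is a correct repackaging of that computation.

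The genuine gap is in the global part for type (a) atoms, which you dispatch as ``exactly the vector-valued Calder\'on--Zygmund argument already carried out in Section \ref{Sect2}''. That is wrong on two counts. First, Section \ref{Sect2} does not verify a H\"ormander condition: it combines a \emph{local} CZ theorem (only on $x/2<y<\min\{1,3x/2\}$) with Hardy-type operators on the global region, which yields weak $(1,1)$ and strong $(p,p)$ but not uniform $L^1$ bounds on atoms. Second, and decisively, the H\"ormander integral you propose diverges for atoms supported near the origin. Writing $I=(x_I-r_I,x_I+r_I)$, the mean value theorem and Lemma \ref{Prop:2.2} give
$\|H_\beta^\nu(x,y)-H_\beta^\nu(x,x_I)\|_{E_\rho}\lesssim r_I\,(xx_I)^{-\nu-1/2}|x-x_I|^{-2}$ for $y\in I$, $x\notin 2I$, and on the far region $x>3x_I/2$ one gets
\begin{equation*}
\int_{3x_I/2}^{1} r_I\,\frac{(xx_I)^{-\nu-1/2}}{|x-x_I|^{2}}\,x^{2\nu+1}\,dx
\;\sim\; r_I\,x_I^{-\nu-1/2}\int_{3x_I/2}^{1}x^{\nu-3/2}\,dx ,
\end{equation*}
which for $\nu>1/2$ is $\sim r_I\,x_I^{-\nu-1/2}$; taking $r_I\sim x_I\to 0$ this blows up like $x_I^{1/2-\nu}$ (for $\nu=1/2$ one gets a $\log(1/x_I)$ divergence). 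The reason is that the bound in Lemma \ref{Prop:2.2} is only the \emph{local} CZ regularity estimate, sharp when $x\sim y$; at long range it overshoots the correct bound $\bigl(|x-y|\,m_\nu(B(x,|x-y|))\bigr)^{-1}$ by a factor $(x/x_I)^{\nu+1/2}$, and no choice of dilate $\tilde I$ repairs this. This is precisely why the paper's Case II.2 (atoms inside $I_0^*$) uses a completely different mechanism: split $\mathcal{V}_\rho\le\mathcal{V}_\rho^{(1)}+\mathcal{V}_\rho^{(2)}$ into times $t\ge 1$ and $t\le 1$, kill the long-time part with the spectral gap $e^{-\lambda_{1,\nu}^2u}$ (no cancellation needed), and near the origin compare the short-time part with the Bessel semigroup $\mathfrak{W}_t^\nu$ on $(0,\infty)$ via the perturbation identity of \cite{DPRS} ($|W_u^\nu-\mathfrak{W}_u^\nu|\lesssim u$, $u\in(0,1)$), concluding with the variational estimate for $\{\mathfrak{P}_t^\nu\}_{t>0}$ from \cite[Theorem 1.3]{WYZ} when $\nu>-1/2$, and with a sharp gradient bound for $\mathfrak{P}_t^\nu$ plus the argument of \cite[pp.~859--860]{WYZ} when $-1<\nu\le-1/2$. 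Without this comparison (or a genuinely sharper long-range regularity estimate for $P_t^\nu(x,y)$, which the paper's lemmas do not supply), your proof does not cover atoms near $x=0$, hence not the full range $\nu>-1$. You also omit the paper's Case III (atoms whose interval lies in no $I_j^*$, handled by the decomposition $a=\sum_j 2^{N-j}b_j$), though that is bookkeeping by comparison.
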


\begin{proof}
Along the proof, for $\alpha>0$ and ${\rm  I}$ an interval,  we shall use the notation $\alpha {\rm  I}$ to denote the interval with the same center as ${\rm I}$ and such that $|\alpha {\rm  I}|=\alpha|{\rm I}|$.

As in the proof of \cite[Lemma 4.33]{BDL}, we consider several cases.\\

\underline{\textit{Case $I)$}}: Suppose firstly that $a=m_\nu(I_j)^{-1}\chi_{I_j}$, for some $j\in\NN_0$.\\

Recall that the operator $\mathcal{V}_\rho(\{P_t^\nu\}_{t>0})$ is bounded from $L^2((0,1), x^{2\nu+1}dx)$ into itself (see Theorem \ref{Th:1.1}).\\

\underline{\textit{Case $I.1)$}}:  If $j=0,1,2$ we have that
    \begin{align}\label{eq:caseI1}
         \| \mathcal{V}_\rho(\{P_t^\nu\}_{t>0})(a)\|_{L^1((0,1),x^{2\nu+1}dx) }
         &\lesssim \| \mathcal{V}_\rho(\{P_t^\nu\}_{t>0})(a)\|_{L^2((0,1),x^{2\nu+1}dx) } \nonumber \\ 
         &\lesssim \|a\|_{L^2((0,1),x^{2\nu+1}dx) }
          \lesssim 
          m_\nu(I_j)^{-1/2}
          \lesssim 1.
    \end{align} \quad 

\underline{\textit{Case $I.2)$}}:    Assume now that $j\ge 3.$
Observe that $(0,1)\setminus (3I_j)=(0,1-3\cdot 2^{-j-1}].$ 
    By proceeding as in \eqref{eq:caseI1}, we get
  \begin{align}\label{eq:caseI2}
         \| \mathcal{V}_\rho(\{P_t^\nu\}_{t>0})(a)\|_{L^1(3I_j,x^{2\nu+1}dx) }
         &\le \| \mathcal{V}_\rho(\{P_t^\nu\}_{t>0})(a)\|_{L^2((0,1),x^{2\nu+1}dx) }m_\nu(3I_j)^{1/2} \nonumber \\
         &\lesssim
         \|a\|_{L^2((0,1),x^{2\nu+1}dx) }m_\nu(3I_j)^{1/2}\lesssim
         \frac{m_\nu(3I_j)^{1/2}}{m_\nu(I_j)^{1/2}}
         \lesssim 1.
    \end{align}
    On the other hand, observe that if $y\in I_j,$ then $y\sim 1$ and $1-y\sim 2^{-j},$ and if $x\in (0,1)\setminus (3I_j)$, then $|x-y|\sim 1-x$. According to \eqref{eq:2.1}, we obtain
    \begin{align}\label{eq:caseI2c}
        \int_{(0,1)\setminus (3I_j)} &\mathcal{V}_\rho(\{P_t^\nu\}_{t>0})(a)(x)x^{2\nu+1}dx \nonumber \\
        & \le \frac{1}{m_\nu(I_j)} \int_{(0,1)\setminus (3I_j)}\int_0^\infty \int_{I_j}|\partial_t P_t^\nu(x,y)|
        y^{2\nu+1}
        dy\;dt\; x^{2\nu+1}dx \nonumber \\
        & \lesssim
        \frac{1}{m_\nu(I_j)}\int_{(0,1)\setminus (3 I_j)}\int_{I_j}\int_0^\infty\int_0^\infty\frac{e^{-ct^2/u}}{u^{3/2}}W_u^\nu(x,y) du\; dt\; y^{2\nu+1}dy\; x^{2\nu+1}dx \nonumber \\
        &\lesssim
        \frac{1}{m_\nu(I_j)}\int_{(0,1)\setminus (3 I_j)}\int_{I_j}\int_0^\infty\frac{W_u^\nu(x,y)}{u} du\; y^{2\nu+1}dy \;x^{2\nu+1}dx \nonumber \\
        &\lesssim
        \frac{1}{m_\nu(I_j)}\int_{(0,1)\setminus (3 I_j)}\int_{I_j}\int_0^\infty\frac{(1+u)^{\nu+2}}{(u+xy)^{\nu+1/2}}\frac{(1-x)(1-y)
        }{u^{5/2}} \nonumber \\
        & \hspace{4cm} \times 
        \exp\Big(-c\frac{|x-y|^2}{u}-\lambda_{1,\nu}^2 u\Big)
        du\;  y^{2\nu+1}dy\; x^{2\nu+1}dx \nonumber \\
        &\lesssim
        \;2^{-j}\int_{(0,1)\setminus (3 I_j)}\int_0^\infty\frac{e^{-cu}(1-x)
        e^{-c|1-x|^2/u}}{u^{5/2}(u+x)^{\nu+1/2}} du\; x^{2\nu+1}dx.
    \end{align}
    Now we distinguish two situations. If $\nu\ge -1/2$, we have that
    \begin{align}\label{eq:caseI2c1}
        \int_{(0,1)\setminus (3 I_j)}&\int_0^\infty\frac{e^{-cu}(1-x)
        e^{-c|1-x|^2/u}}{u^{5/2}(u+x)^{\nu+1/2}} du\; x^{2\nu+1}dx \nonumber \\
        &\lesssim
        \int_{(0,1)\setminus (3 I_j)}x^{\nu+1/2}(1-x)\int_0^\infty\frac{e^{-c|1-x|^2/u}}{u^{5/2}} du\;dx \nonumber \\
        &\lesssim
        \int_{(0,1)\setminus (3 I_j)}\frac{x^{\nu+1/2}}{(1-x)^2}dx
        \lesssim
        \int_0^{1-3\cdot 2^{-j-1}}\frac{dx}{(1-x)^2}
        \lesssim
        2^j.
    \end{align}
    Next, suppose that $-1<\nu<-1/2.$ Since $$(u+x)^{-\nu-1/2}
    \lesssim u^{-\nu-1/2}+x^{-\nu-1/2},
    \quad u,x\in (0,\infty),$$
    we get 
    \begin{align}\label{eq:caseI2c2}
         \int_{(0,1)\setminus (3 I_j)}&\int_0^\infty\frac{e^{-cu}(1-x)e^{-c|1-x|^2/u}}{u^{5/2}(u+x)^{\nu+1/2}} du\; x^{2\nu+1}dx \nonumber \\
        &\lesssim
        \int_{(0,1)\setminus (3 I_j)}\int_0^\infty\frac{e^{-cu}(1-x)(u^{-\nu-1/2}+x^{-\nu-1/2})e^{-c|1-x|^2/u}}{u^{5/2}} du\;x^{2\nu+1}dx \nonumber \\
        &\lesssim
        \int_0^{1-3\cdot 2^{-j-1}}\frac{x^{\nu+1/2}}{(1-x)^2}dx+ \int_0^{1-3\cdot 2^{-j-1}}\frac{x^{2\nu+1}}{(1-x)^2}dx \nonumber \\
        &\lesssim
        \int_0^{1/2}x^{2\nu+1} dx+\int_{1/2}^{1-3\cdot 2^{-j-1}}\frac{dx}{(1-x)^2}
        \lesssim 2^j.
    \end{align}
    Combining \eqref{eq:caseI2c}, \eqref{eq:caseI2c1} and \eqref{eq:caseI2c2}, we have that
    $$
    \int_{(0,1)\setminus (3 I_j)}\mathcal{V}_\rho(\{P_t^\nu\}_{t>0})(a)(x) x^{2\nu+1}dx \lesssim 1,
    $$
    and together with \eqref{eq:caseI2} we conclude
    $$
    \| \mathcal{V}_\rho(\{P_t^\nu\}_{t>0})(a)\|_{L^1((0,1), x^{2\nu+1}dx)} \leq C,
    $$
    where $C>0$ does not depend on $a$.\\

\underline{\textit{Case $II)$}}:
 
Suppose that $a$ is a $\Delta_\nu$-atom such that $\supp a\subseteq I$, where $I$ is an interval contained in $I_j^*=\frac{{21}}{20}I_j\cap(0,1)$, for some $j\in\NN_0$,
$\|a\|_{L^\infty((0,1), x^{2\nu+1}dx)}\le m_\nu(I)^{-1}$ and
$$\int_I a(x)x^{2\nu+1}dx=0.$$
    We denote by $x_I$ the center of $I$ and by $r_I$ the radius of $I$.\\
    
\underline{\textit{Case $II.1)$}}: Assume firstly that $j\ge 1$.\\

By using once again that $\mathcal{V}_\rho(\{P_t^\nu\}_{t>0})$ is bounded from $L^2((0,1), x^{2\nu+1}dx)$ into itself, we obtain
\begin{align}\label{eq:caseII1_2I}
\| \mathcal{V}_\rho(\{P_t^\nu\}_{t>0})(a)\|_{L^1(2I, x^{2\nu+1}dx)}
&\le   \| \mathcal{V}_\rho(\{P_t^\nu\}_{t>0})(a)\|_{L^2((0,1), x^{2\nu+1}dx)}m_{\nu}(2I)^{1/2} \nonumber \\
&\lesssim \|a\|_{L^2((0,1), x^{2\nu+1}dx)}m_{\nu}(2I)^{1/2}
\lesssim \frac{m_{\nu}(2I)^{1/2}}{m_{\nu}(I)^{1/2}}\lesssim 1.
\end{align}
Moreover, since
$$\int_I a(x)x^{2\nu+1}dx=0,$$ 
we can write
\begin{align*}
&\| \mathcal{V}_\rho(\{P_t^\nu\}_{t>0})(a)\|_{L^1(3I_j\setminus (2I), x^{2\nu+1}dx)}  \\
& \quad \le 
\int_{3I_j\setminus (2I)}\int_I\int_0^\infty |\partial_t(P_t^\nu(x,y)-P_t^\nu(x,x_I))|\;dt\;|a(y)|y^{2\nu+1}dy\; x^{2\nu+1}dx\\
&\quad \lesssim
\int_{3I_j\setminus (2I)}\int_I\int_0^\infty\int_0^\infty |\partial_t(te^{-t^2/(4u)})||W_u^\nu(x,y)-W_u^\nu(x,x_I)|\frac{du}{u^{3/2}} dt \\
& \hspace{4cm} \times |a(y)|y^{2\nu+1}dy\; x^{2\nu+1}dx\\
&\quad \lesssim
\int_{3I_j\setminus (2I)}\int_I\int_0^\infty \frac{|W_u^\nu(x,y)-W_u^\nu(x,x_I)|}{u^{3/2}} \int_0^\infty e^{-c t^2/u}
dt\;{du}\;|a(y)|y^{2\nu+1}dy\; x^{2\nu+1}dx\\
&\quad \lesssim
\int_{3I_j\setminus (2I)}\int_I\int_0^\infty \frac{|W_u^\nu(x,y)-W_u^\nu(x,x_I)|}{u} {du}\;|a(y)|y^{2\nu+1}dy\; x^{2\nu+1}dx.
\end{align*}
        Observe that if $x\in 3 I_j$ and $z\in I$, then $x\sim z\sim 1$, while if $x\in (0,1)\setminus (2I)$ and $z\in I$, then $|x-z|\sim |x-x_I|$.   Therefore, according to \eqref{eq:1.2}, we get that
\begin{align*}
|W_u^\nu(x,y)-W_u^\nu(x,x_I)|&\le 
\Big| \int_{x_I}^y\partial_zW_u^\nu(x,z)dz\Big|
\le
\Big| \int_{x_I}^y \frac{e^{-c (x-z)^2/u}}{(xz)^{\nu+1/2}u}dz\Big|\\
& \lesssim
|x_I-y|\frac{
e^{-c(x-x_I)^2/u}
}{u}, \:\:\:x\in (3I_j)\setminus (2I) \text{ and } y\in I. 
\end{align*}
It follows that
\begin{align}\label{eq:caseII1_3Ij-2I}
&\| \mathcal{V}_\rho(\{P_t^\nu\}_{t>0})(a)\|_{L^1(3I_j\setminus (2I), x^{2\nu+1}dx)} \nonumber \\
& \qquad \lesssim
\int_{3I_j\setminus (2I)}\int_I\int_0^\infty |x_I-y|\frac{e^{-c (x-x_I)^2/u}}{u^2} {du}\;|a(y)|y^{2\nu+1}dy\; x^{2\nu+1}dx \nonumber \\
&\qquad 
\lesssim
\int_{3I_j\setminus (2I)}\int_I |x_I-y|\frac{|a(y)|}{(x-x_I)^2}y^{2\nu+1}dy\; x^{2\nu+1}dx \nonumber \\
&\qquad
\lesssim
\int_{3I_j\setminus (2I)} \frac{r_I}{(x-x_I)^2}\; dx \lesssim 
 r_I\int_{(0,1)\setminus (2I)}\frac{dx}{(x-x_I)^2}
\lesssim 1.
\end{align}
In a similar way, using that $|x-x_I|\sim |1-x|$ for $x\in (0,1)\setminus (3I_j)$, we get 
\begin{align}\label{eq:caseII1_3Ij}
& \| \mathcal{V}_\rho(\{P_t^\nu\}_{t>0})(a)\|_{L^1((0,1)\setminus (3I_j), x^{2\nu+1}dx)} \nonumber \\
&\qquad \lesssim \int_{(0,1)\setminus (3I_j)}\frac{r_I}{|x-x_I|^2}\;  x^{\nu+1/2}dx 
\lesssim
\int_0^{1-3\cdot 2^{-j-1}}\frac{2^{-j}}{|1-x|^2}x^{\nu+1/2}dx \nonumber \\
& \qquad \lesssim
2^{-j}\Big(\int_0^{1/4}x^{\nu+1/2}dx +\int_{1/4}^{1-3\cdot 2^{-j-1}}\frac{1}{|1-x|^2}dx\Big)
\lesssim 1.
\end{align}
Putting together 
\eqref{eq:caseII1_2I},
\eqref{eq:caseII1_3Ij-2I} and
\eqref{eq:caseII1_3Ij}, we deduce
             $$
    \| \mathcal{V}_\rho(\{P_t^\nu\}_{t>0})(a)\|_{L^1((0,1), x^{2\nu+1}dx)}\le C,
    $$
    where $C>0$ does not depend on the atom $a$.\\
 
\underline{\textit{Case $II.2)$}}: Consider now the situation $j=0$. For $x\in (0,1)$, define the operators 
        $$
    \mathcal{V}_\rho^{(1)}(\{P_t^\nu\}_{t>0})(f)(x)
:=
\sup_{\substack{1\le t_1<\dots<t_k\\k\in\NN}}
\Big(
\sum_{j=1}^{k-1}
\Big| 
P^\nu_{t_j}(f)(x)
-
P^\nu_{t_{j+1}}(f)(x)
\Big|^\rho
\Big)^{1/\rho}, 
    $$
  and
    $$
    \mathcal{V}_\rho^{(2)}(\{P_t^\nu\}_{t>0})(f)(x)
:=\sup_
{\substack{0< t_1<\dots<t_k\le 1\\k\in\NN}}
\Big(
\sum_{j=1}^{k-1}
\Big| 
P^\nu_{t_j}(f)(x)
-
P^\nu_{t_{j+1}}(f)(x)
\Big|^\rho
\Big)^{1/\rho},  
    $$
and observe that
\begin{equation}\label{eq:decompVp}
  \mathcal{V}_\rho(\{P_t^\nu\}_{t>0})(f)\le \mathcal{V}_\rho^{(1)}(\{P_t^\nu\}_{t>0})(f)+ \mathcal{V}_\rho^{(2)}(\{P_t^\nu\}_{t>0})(f).
\end{equation}

\quad

We start the analysis with the operator $\mathcal{V}_\rho^{(1)}$. For $x \in (0,1)$, we have that
\begin{align}\label{eq:Vpalpha}
& \mathcal{V}_\rho^{(1)}(\{P_t^\nu\}_{t>0})(a)(x) \nonumber \\
&\qquad 
\le 
\int_1^\infty\int_0^1|\partial_tP_t^\nu(x,y)||a(y)|y^{2\nu+1}dydt \nonumber \\
&\qquad \lesssim 
\int_0^1 \int_1^\infty\int_0^\infty\frac{|\partial_t(t
e^{-t^2/(4u)}
)|}{u^{3/2}}W_u^\nu(x,y)\;dudt\;|a(y)|y^{2\nu+1}dy \nonumber \\
& \qquad \lesssim 
\int_0^1 \int_1^\infty\int_0^\infty\frac{e^{-c t^2/u}}{u^{3/2}}W_u^\nu(x,y)\;dudt\;|a(y)|y^{2\nu+1}dy \nonumber \\
&\qquad \lesssim  
\int_0^1 \int_1^\infty\frac{1}{t^{3+\alpha}}\int_0^\infty u^{\alpha/2}W_u^\nu(x,y)\;dudt\;|a(y)|y^{2\nu+1}dy \nonumber \\
& \qquad \lesssim 
\int_0^1|a(y)|y^{2\nu+1}\int_0^\infty \frac{(1+u)^{\nu+2}}{(u+xy)^{\nu+1/2}}
\exp\Big(-c\frac{(x-y)^2}{u}-\lambda_{1,\nu}^2u\Big)
u^{\alpha/2-1/2} \, du \, dy.
\end{align}
for any $\alpha\ge 0.$\\

If $\nu\ge -1/2$, by taking $\alpha=2\nu+1$
in \eqref{eq:Vpalpha}, we obtain
\begin{align*}
      \mathcal{V}_\rho^{(1)}(\{P_t^\nu\}_{t>0})(a)(x)&\lesssim \int_0^1|a(y)|y^{2\nu+1}dy\int_0^\infty \frac{e^{-c\lambda_{1,\nu}^2u}}{\sqrt{u}}du \\
      &\lesssim 
      \|a\|_{L^\infty((0,1), x^{2\nu+1}dx)}m_\nu(I_0)
      \lesssim 1,\:\:\;x\in(0,1).
\end{align*}

However, if $-1<\nu<-1/2$, we choose $\alpha=0$ in \eqref{eq:Vpalpha} and write
\begin{align*}
& \mathcal{V}_\rho^{(1)}(\{P_t^\nu\}_{t>0})(a)(x)\\
& \qquad \lesssim 
\int_0^1|a(y)|y^{2\nu+1}\int_0^\infty (1+u)^{\nu+2}(u^{-\nu-1/2}+(xy)^{-\nu-1/2})\frac{e^{-c\lambda_{1,\nu}^2u}}{\sqrt{u}}du dy\\
&\qquad \lesssim 
\int_0^1|a(y)|y^{2\nu+1}dy
\lesssim 1,\:\:\;x\in(0,1).
\end{align*}
Hence, we conclude  
\begin{equation}\label{eq:Vp1}
\|\mathcal{V}_\rho^{(1)}(\{P_t^\nu\}_{t>0})(a)\|_{L^1((0,1),x^{2\nu+1}dx)}
\lesssim 1.
\end{equation}

\quad 

Next we study the operator $\mathcal{V}_\rho^{(2)}$. 
Let $I_0^{**}=\left(\frac{21}{20}\right)^2 I_0\cap (0,1)$.
If $x\in (0,1)\setminus I_0^{**}$ and $y\in I_0^*$, then $|x-y|\sim x\sim 1$. By considering as above
the situations of $\nu\ge -1/2$ and $-1<\nu<-1/2$ separately, we obtain that
\begin{align}\label{eq:Vp2c}
&\|\mathcal{V}_\rho^{(2)}(\{P_t^\nu\}_{t>0})(a)\|_{L^1((0,1)\setminus I_0^{**},x^{2\nu+1}dx)} \nonumber \\
&\qquad \lesssim 
\int_{(0,1)\setminus I_0^{**}}\int_I |a(y)|y^{2\nu+1}\int_0^1\int_0^\infty \frac{|\partial_t(te^{-t^2/(4u)})|}{u^{3/2}}W_u^\nu(x,y)\;dudt\;dy\;x^{2\nu+1}dx \nonumber \\
&\qquad \lesssim 
\int_{(0,1)\setminus I_0^{**}}\int_I |a(y)|y^{2\nu+1}\int_0^1\int_0^\infty\frac{e^{-c t^2/u}}{u^{3/2}}\frac{(1+u)^{\nu+2}}{(u+xy)^{\nu+1/2}} \\
& \hspace{7cm} \times 
\frac{
e^{-c(x-y)^2/u-cu}
}{\sqrt{u}}du\;dt\;dy\;x^{2\nu+1}dx \nonumber \\
&\qquad \lesssim 
\int_{(0,1)\setminus I_0^{**}}\int_I |a(y)|y^{2\nu+1}\int_0^1\int_0^\infty\frac{(1+u)^{\nu+2}}{u^2}\frac{e^{-c/u}e^{-cu}}{(u+y)^{\nu+1/2}}du\;dt\;dy\;x^{2\nu+1}dx \nonumber \\
&\qquad \lesssim 
\int_I |a(y)|y^{2\nu+1}dy
\lesssim 1.
\end{align}

\quad 

It remains to verify the uniform estimate
\begin{equation}\label{eq:Vp2}
\|\mathcal{V}_\rho^{(2)}(\{P_t^\nu\}_{t>0})(a)\|_{L^1(I_0^{**},x^{2\nu+1}dx)}
\lesssim 1.
\end{equation}
Its proof is more subtle and it is convenient to introduce the following auxiliary operator 
    $$
    \mathfrak{W}_t^\nu(f)(x)
    :=\int_0^\infty \mathfrak{W}_t^\nu(x,y)f(y)y^{2\nu+1}dy, \quad x,t>0,
    $$
where
$$
\mathfrak{W}_t^\nu(x,y)
:=\frac{(xy)^{-\nu}}{2t}I_\nu
\Big( \frac{xy}{2t}\Big)
e^{-(x^2+y^2)/4t}
$$

and
$I_\nu$ denotes the modified Bessel function of the first kind and order $\nu$.
$\{ \mathfrak{W}_t^\nu \}_{t>0}$ is a symmetric diffusion semigroup associated with
$\Delta_\nu$ on $((0,\infty),$ $x^{2\nu+1}dx). $
As in \cite{DPRS}, we consider a function $\rho\in C^\infty(0,\infty)$ such that $\rho(x)=1$ for $x\in I_0^{**}$ and $\rho(x)=0$ for $x\not\in I_0^{***}$, {being $I_0^{***}=\left(\frac{21}{20}\right)^3I_0\cap (0,1)$}. By using \cite[Equation (4.6)]{DPRS}, we can write
\begin{align*}
{W}_t^\nu(x,y)-\mathfrak{W}_t^\nu(x,y)&=\rho(x){W}_t^\nu(x,y)-\rho(y)\mathfrak{W}_t^\nu(x,y)\\
&=\int_0^t\int_0^\infty \mathfrak{W}_{t-s}^\nu(x,z)\rho''(z){W}_s^\nu(z,y)z^{2\nu+1}dzds\\
&\qquad 
+2\int_0^t\int_0^\infty \partial_z\mathfrak{W}_{t-s}^\nu(x,z)\rho'(z){W}_s^\nu(z,y)z^{2\nu+1}dzds\\
&\qquad 
+\int_0^t\int_0^\infty \mathfrak{W}_{t-s}^\nu(x,z)\rho'(z)\frac{2\nu+1}{z}{W}_s^\nu(z,y)z^{2\nu+1}dzds\\
&= : 
\sum_{i=1}^3 R_t^i(x,y), \quad x,y\in I_0^{**}.
\end{align*}
By proceeding as in the proof of \cite[Lemma 4.6]{DPRS} we can show that
$$
|R_t^1(x,y)|
\lesssim 
\int_0^t \frac{e^{-c/(t-s)}}{(t-s)^{\nu+1}}\frac{e^{-c/s}}{s^{\nu+1}}ds, \quad x,y\in I_0^{**} \text{ and } t\in (0,1).
$$
Since $\nu>-1$, it follows that
\begin{equation*}
    |R_t^1(x,y)|
    \lesssim 
    t, \quad x,y\in I_0^{**} \text{ and } t\in (0,1).
\end{equation*}
In a similar way we can see that, for $i=2,3$,
$$
|R_t^i(x,y)|
\lesssim 
t, \quad x,y\in I_0^{**} \text{ and } t\in (0,1).
$$
Hence, 
\begin{equation}\label{eq:4.1}
|{W}_t^\nu(x,y)-\mathfrak{W}_t^\nu(x,y)|
\lesssim 
t, \quad x,y\in I_0^{**} \text{ and } t\in (0,1).
\end{equation}
We define, for every $x,t>0$,
$$
\mathfrak{P}_t^\nu(f)(x)
:=\frac{t}{2\sqrt{\pi}}\int_0^\infty\frac{e^{-t^2/(4u)}}{u^{3/2}} \mathfrak{W}_t^\nu f(x)du=\int_0^1 \mathfrak{P}_t^\nu(x,y)f(y)y^{2\nu+1}dy, 
    $$
where
$$
\mathfrak{P}_t^\nu(x,y)
:=\frac{t}{2\sqrt{\pi}}\int_0^\infty\frac{e^{-t^2/(4u)}}{u^{3/2}} \mathfrak{W}_t^\nu(x,y)du.
$$
For $x \in (0,1)$, we have that 
\begin{equation}\label{eq:VrPP}
|\mathcal{V}_\rho^{(2)}(\{P_t^\nu\}_{t>0})(a)(x)|
\le 
\mathcal{V}_\rho^{(2)}(\{P_t^\nu-\mathfrak{P}_t^\nu\}_{t>0})(a)(x)
+
\mathcal{V}_\rho^{(2)}(\{\mathfrak{P}_t^\nu\}_{t>0})(a)(x).
\end{equation}
Furthermore,
\begin{align*}
& \mathcal{V}_\rho^{(2)}(\{P_t^\nu-\mathfrak{P}_t^\nu\}_{t>0})(a)(x)\\
& \qquad \leq
\int_I |a(y)|\int_0^1\int_0^\infty \frac{|\partial_t(te^{-t^2/(4u)})|}{u^{3/2}}|W_u^\nu(x,y)-\mathfrak{W}_u^\nu(x,y)|du\;dt\;y^{2\nu+1}dy\\
& \qquad \lesssim
\int_I |a(y)|\int_0^1\int_1^\infty \frac{|W_u^\nu(x,y)|}{u^{3/2}}du\;dt\;y^{2\nu+1}dy\\
& \qquad \qquad + 
\int_I |a(y)|\int_0^1\int_1^\infty \frac{|\mathfrak{W}_u^\nu(x,y)|}{u^{3/2}}du\;dt\;y^{2\nu+1}dy\\
& \qquad \qquad+
\int_I |a(y)|\int_0^1\int_0^1 \frac{|W_u^\nu(x,y)-\mathfrak{W}_u^\nu(x,y)|}{u^{3/2}}du\;dt\;y^{2\nu+1}dy \\
& \qquad =:
I_1(x)+I_2(x)+I_3(x),\quad x\in (0,1).
\end{align*}
By analysing separately the cases $\nu\ge -1/2$ and $-1<\nu<-1/2$, we obtain
\begin{align*}
I_1(x)
& \lesssim 
\int_I |a(y)|y^{2\nu+1}\int_0^1\int_1^\infty\frac{e^{-c t^2/u}}{u^{3/2}}\frac{(1+u)^{\nu+2}}{(u+xy)^{\nu+1/2}}\frac{e^{-c(x-y)^2/u-cu}}{\sqrt{u}}du\;dt\;dy\;\\
& \lesssim
\int_I |a(y)|y^{2\nu+1}\int_0^1\int_1^\infty e^{-cu}\;du\;dt\;dy
\lesssim 1, \quad x\in I_0^{**}.
\end{align*}
Also, \eqref{eq:4.1} leads to
\begin{align*}
       I_3(x)&
       \lesssim
       \int_I |a(y)|y^{2\nu+1}\int_0^1\int_0^1\frac{1}{u^{1/2}}du\;dt\;dy
       \lesssim 1, \quad x\in I_0^{**}.
\end{align*}
On the other hand, according \cite[Lemma 4.5]{DPRS}, we have that
 \begin{equation*}
0\le \mathfrak{W}_t^\nu(x,y)
     \lesssim \frac{1}{m_\nu(B(x,\sqrt{t}))}
     e^{-c (x-y)^2/t},\qquad x,y\in (0,\infty) \text{ and } t>0,
 \end{equation*}
 where 
 \begin{equation}\label{eq:4.3}
     m_\nu(B(x,\sqrt{t}))\sim\left\{\begin{array}{c}
     x^{2\nu+1}\sqrt{t}, \quad \sqrt{t}\le x\\
      (\sqrt{t})^{2\nu+2}, \quad \sqrt{t}> x\\
     \end{array}\right.
 \end{equation}
 see \cite[Equation (4.7)]{DPRS}.
Then, 
\begin{align*}
I_2(x)
&\lesssim
\int_I |a(y)|y^{2\nu+1}\int_0^1\int_1^\infty\frac{1}{u^{3/2}m_\nu(B(x,\sqrt{u})}du\;dt\;dy\\
&\lesssim
\int_I |a(y)|y^{2\nu+1}\int_0^1\int_1^\infty\frac{1}{u^{5/2+\nu}}du\;dt\;dy
\lesssim 1, \quad x\in I_0^{**}.
\end{align*}
We have obtained that
\begin{equation}\label{eq:VpP-P}
\mathcal{V}_\rho^{(2)}(\{P_t^\nu-\mathfrak{P}_t^\nu\}_{t>0})(a)(x) \lesssim 1, \quad x\in I_0^{**}. \end{equation}

\quad 

On the other hand, it is clear that
\begin{equation}\label{eq:VpPP1}
\mathcal{V}_\rho^{(2)}(\{\mathfrak{P}_t^\nu\}_{t>0})(a)(x)\le \mathcal{V}_\rho(\{\mathfrak{P}_t^\nu\}_{t>0})(a)(x),\quad x\in (0,\infty).
\end{equation}

\quad 

Then, if $\nu>-1/2$, according to \cite[Theorem 1.3]{WYZ}, we get
\begin{equation}\label{eq:VpPP2a}
\| \mathcal{V}_\rho(\{\mathfrak{P}_t^\nu\}_{t>0})(a)\|_{L^1((0,\infty), x^{2\nu+1}dx)}
\lesssim 1.
\end{equation}

\quad

Suppose now that $-1<\nu\le -1/2.$ We are going to see that 
$$
|\partial_t\partial_x \mathfrak{P}_t^\nu(x,y)|
\lesssim \frac{1}{m_\nu(B(x,|x-y|))(t+|x-y|)^2}, \quad x,y\in (0,1).
$$
Indeed, since
$$
\partial_t\partial_x\mathfrak{P}_t^\nu(x,y)=\frac{1}{2\sqrt{\pi}}\int_0^\infty\frac{\partial_t(te^{-t^2/(4u)})}{u^{3/2}} \partial_x \mathfrak{W}_t^\nu(x,y)du,\quad x,y,t\in (0,\infty),
$$
according to \cite[Lemma 4.5 (b)]{DPRS}  and \eqref{eq:4.3}, we get
\begin{align*}
|\partial_t\partial_x\mathfrak{P}_t^\nu(x,y)|
&\lesssim
\int_0^\infty \frac{
e^{-c(t^2+(x-y)^2)/u}
}{u^{2}m_\nu(B(x,\sqrt{u}))}du\\
&\lesssim 
\int_0^{x^2} \frac{e^{-c(t^2+(x-y)^2)/u}}{u^{5/2}x^{2\nu+1}}du+ \int_{x^2}^\infty \frac{e^{-c(t^2+(x-y)^2)/u}}{u^{\nu+3}}du, \\
&\lesssim
\frac{1}{(t^2+|x-y|^2)^{3/2}x^{2\nu+1}}+  \frac{\chi_{\{|x-y|\le  x\}}}{(t^2+|x-y|^2)x^{2\nu+2}}
+\frac{\chi_{\{x>|x-y|\} }}{(t^2+|x-y|^2)^{\nu+2}}\\
& \lesssim
\frac{\chi_{\{|x-y|\le  x\}}}{(t+|x-y|)^2|x-y|x^{2\nu+1}}
+\frac{\chi_{\{x>|x-y| \}}}{(t+|x-y|)^2|x-y|^{2\nu+2}}\\
&\lesssim
\frac{1}{m_\nu(B(x,|x-y|))(t+|x-y|)^2} \quad x,y\in (0,1), \:\; t>0.
\end{align*}
Now, by proceeding as in \cite[page 859 and 860]{WYZ} we can deduce that 
\begin{equation}\label{eq:VpPP2}
\|\mathcal{V}_\rho(\{\mathfrak{P}_t^\nu\}_{t>0})(a)\|_{L^1(I_0^{**},x^{2\nu+1}dx)} 
\lesssim 1.
\end{equation}

\quad 

Putting together the estimates
\eqref{eq:VrPP},
\eqref{eq:VpP-P},
\eqref{eq:VpPP1},
\eqref{eq:VpPP2a}
and
\eqref{eq:VpPP2}
we establish \eqref{eq:Vp2}.\\

Therefore, 
\eqref{eq:decompVp},
\eqref{eq:Vp1},
\eqref{eq:Vp2c}
and
\eqref{eq:Vp2}
yield to
$$
\|\mathcal{V}_\rho(\{{P}_t^\nu\}_{t>0})(a)\|_{L^1((0,1),x^{2\nu+1}dx)}\le C,
$$
where $C>0$ does not depend on the atom $a.$\\

\underline{\textit{Case $III)$}}: Finally assume that there is no $j\in\NN_0$ such that $\supp a\subseteq I_j^*$.\\

We choose an interval $I\subseteq (0,1)$ such that $\supp a\subseteq I$, $\|a\|_\infty\le m_\nu(I)^{-1}$ and 
    $$\int_I a(x)x^{2\nu+1}dx=0.$$
We are going to proceed as in \cite[Case 3 in the proof of Theorem A]{DPRS}. Take $N\in\NN_0$, $M\in\NN\cup\{\infty\}$ such that $\displaystyle I\subseteq\bigcup_{j=N}^MI_j^*$ and satisfying the following property: if $\tilde{N}\in\NN_0$ and $\tilde{M}\in\NN\bigcup\{ \infty\}$ and $\displaystyle I\subseteq\bigcup_{j=\tilde{N}}^{\tilde{M}}I_j^*$, then $\tilde{N}\le N$, $M\le \tilde{M}.$\\

Observe that $m_\nu (I)\sim 2^{-N}$ and $m_\nu(I_j^*)\sim  {2^{-j}}$, $j\in\NN_0$. We write     

    $$
    a=\sum_{j=N}^M 2^{N-j}b_j,
    $$
for 
\begin{equation*}
b_j
:=2^{j-N}a\chi_{I_j^*}, 
\quad j=N,\dots,M.    
\end{equation*}
We have that $\supp b_j\subseteq I_j^*$ and $$\|b_j\|_{L^\infty((0,1), x^{2\nu+1}dx)}
\lesssim m_\nu(I_j^*)^{-1}, \quad j=N,\dots,M,$$
where $C$ does not depend on $a$, that is, on $N$ and $M.$ By proceeding as in \cite[Remark 5.9]{DPRS} and using the properties shown in I) and II), we obtain 
\begin{equation*}
\|\mathcal{V}_\rho(\{{P}_t^\nu\}_{t>0})(a)\|_{L^1((0,1),x^{2\nu+1}dx)}
\lesssim
\sum_{j=N}^M {2^{N-j}}
\lesssim 1. \qedhere
\end{equation*}
\end{proof}


\subsection{Equivalence of the quantities
\eqref{eq:Th1.2a}
and
\eqref{eq:Th1.2b}}
\quad \\
It suffices to observe that, for every $t>0$,
\begin{equation}\label{eq:4.10}
    P_t^\nu(f)\le \mathcal{V}_\rho(\{{P}_t^\nu\}_{t>0})(f)+|P_1^\nu(f)|, \quad  f\in L^1((0,1),x^{2\nu+1}dx).
    \end{equation}
    Indeed, \eqref{eq:4.10} implies that
    \begin{equation*}
    P_*^\nu(f)\le \mathcal{V}_\rho(\{{P}_t^\nu\}_{t>0})(f)+|P_1^\nu(f)|,\quad  f\in L^1((0,1),x^{2\nu+1}dx),
    \end{equation*}
    and since $P_1^\nu$ is bounded from  $L^1((0,1),x^{2\nu+1}dx)$ into itself,  for $f\in L^1((0,1),x^{2\nu+1}dx)$ it follows that
\begin{equation*}
\|P_*^\nu(f)\|_{L^1((0,1),x^{2\nu+1}dx)}
\lesssim
\|\mathcal{V}_\rho(\{{P}_t^\nu\}_{t>0})(f)\|_{L^1((0,1),x^{2\nu+1}dx)}
+\|f\|_{L^1((0,1),x^{2\nu+1}dx)}. 
\end{equation*}

\section{Proof of Theorem \ref{Th:1.4}}
\label{Sect5}

The Hardy space $H^1((0,1),S_\nu)$ was characterised by using atoms in \cite[Theorem B]{DPRS} (see also \cite[Theorem 4.17]{BDL}). We consider as in \cite{BDL,DPRS} the intervals $I_j$, $j\in\mathbb{Z}\setminus\{0\}$, given by
$$
I_j:=
\left\{\begin{array}{cc}    (1-2^{-j},1-2^{-j-1}], & j\ge 1, \\
  (2^{j-1},2^j]   & j\le -1. 
\end{array}\right.
$$
A complex function $a$ defined on $(0,1)$ is said to be a $S_\nu$-atom when one of the following conditions is satisfied: \\
\begin{itemize}
    \item[(a)] There exists an interval $I\subseteq (0,1)$ for which $\supp a\subseteq I,$  $\|a\|_{L^\infty((0,1),dx)}
    \le  m(I)^{-1}$ and 
    $$\int_0^1 a(x)dx=0.$$
    
    \quad 
    
    \item[(b)] $a=m(I_j)^{-1} \chi_{I_j}$, for some $j\in\mathbb{Z}\setminus\{0\},$ \\
\end{itemize}
where $m$ denotes the Lebesgue measure on $(0,\infty).$\\

In \cite[Theorem B]{DPRS} (see also \cite[Theorem 4.17]{BDL}) it was proven that $f\in L^1((0,1),dx)$ is in $ H^1((0,1), S_\nu)$ if, and only if, 
$$f=\sum_{i=0}^\infty \lambda_i a_i
\quad \text{in} \quad  
L^1((0,1),dx),$$
where $\lambda_i>0$, $i\in\NN_0$, being $\sum_{i=0}^\infty\lambda_i<\infty,$ and $a_i$ is a $S_\nu$-atom, $i\in\NN_0.$ Furthermore, for every $f\in H^1((0,1), S_\nu)$,
$$
\|f\|_{L^1((0,1),dx)} + \|\mathcal{P}_*^\nu f\|_{L^1((0,1),dx)} \sim \inf \sum_{i=0}^\infty\lambda_i,
$$
where the infimum is taken over all sequences $\{\lambda_i\}_{i=0}^\infty$ in $(0,\infty)$ such that 
$\sum_{i=0}^\infty\lambda_i<\infty$ and $f=\sum_{i=0}^\infty \lambda_i a_i$  in $L^1((0,1),dx)$, where, for every $i\in\NN_0$, $a_i$ is a $S_\nu$-atom.\\

Since the variation operator $\mathcal{V}_\rho(\{\mathcal{P}_t^\nu\}_{t>0})$ is bounded from $L^1((0,1),dx)$ into $L^{1,\infty}((0,1),$ $dx)$, in order to see that $\mathcal{V}_\rho(\{\mathcal{P}_t^\nu\}_{t>0})$ is bounded from  $L^{1}((0,1), dx)$ into $H^1((0,1),S_\nu)$ it is sufficient to show the following uniform estimate.

\begin{Prop}
Let $\rho>2$ and {$\nu>-1/2$}.
Then, for every $S_\nu$-atom $a$, we have that
\begin{equation*}
\|\mathcal{V}_\rho(\{\mathcal{P}_t^\nu\}_{t>0})(a)\|_{L^1((0,1),dx)}
\lesssim 1,
\end{equation*}
where the constant does not depend on $a.$
\end{Prop}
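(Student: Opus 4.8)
The plan is to follow the scheme of Proposition \ref{Prop:4.1}, now with respect to the Lebesgue measure $m$ on $(0,1)$ and the kernel $\mathcal{H}_0^\nu(x,y)=(xy)^{\nu+1/2}H_0^\nu(x,y)$, for which the size and regularity bounds \eqref{I.7} and \eqref{I.8} are already at our disposal. Two features make the argument more uniform than in the $\Delta_\nu$ case: the estimates \eqref{I.7} and \eqref{I.8} carry no weight factor, and, since $\nu>-1/2$, Theorem \ref{Th:1.3}$(a)$ with $p=2$ guarantees that $\mathcal{V}_\rho(\{\mathcal{P}_t^\nu\}_{t>0})$ is bounded on $L^2((0,1),dx)$, which controls every local piece with a constant independent of the atom. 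Throughout I use $\mathcal{V}_\rho(\{\mathcal{P}_t^\nu\}_{t>0})(a)(x)=\|\mathcal{T}_0^\nu(a)(\cdot,x)\|_{E_\rho}$ and, for $x\notin\supp a$, the bound $\mathcal{V}_\rho(\{\mathcal{P}_t^\nu\}_{t>0})(a)(x)\le\int_0^1\|\mathcal{H}_0^\nu(x,y)\|_{E_\rho}|a(y)|\,dy$. I organize the proof by atom type, treating separately the two families of dyadic intervals: $I_j$ with $j\ge 1$ (approaching $x=1$) and the new family $I_j$ with $j\le -1$ (shrinking to the origin).

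For an atom of type $(b)$, $a=m(I_j)^{-1}\chi_{I_j}$, I split $(0,1)=3I_j\cup((0,1)\setminus 3I_j)$. On $3I_j$ the $L^2$-boundedness and Cauchy--Schwarz give $\|\mathcal{V}_\rho(\{\mathcal{P}_t^\nu\}_{t>0})(a)\|_{L^1(3I_j,dx)}\lesssim\|a\|_{L^2((0,1),dx)}\,m(3I_j)^{1/2}\lesssim m(I_j)^{-1/2}m(I_j)^{1/2}\lesssim 1$, as in \eqref{eq:caseI1}. On the complement I insert \eqref{I.7}. For $j\ge 1$ this reproduces Case $I.2$ of Proposition \ref{Prop:4.1}. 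For $j\le -1$ the dilate $3I_j=(0,3\cdot 2^{j-1})$ already absorbs the origin, so only the outer region $(3\cdot 2^{j-1},1)$ survives; there $y\sim 2^{j}$, and the integral $\int_{I_j}\int_{(0,1)\setminus 3I_j}\|\mathcal{H}_0^\nu(x,y)\|_{E_\rho}\,dx\,dy$ splits into the regimes $y\le x/2$ and $x/2<y$, where the decays $y^{\nu+1/2}x^{-\nu-3/2}$ and $|x-y|^{-1}$ make each piece converge; after dividing by $m(I_j)$ one gets a bound $\lesssim 1$ uniform in $j$.

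For an atom of type $(a)$ supported in a single interval $I$, with center $x_I$ and radius $r_I$, I use $\int_0^1 a\,dx=0$. On $2I$ the $L^2$-bound gives $\|\mathcal{V}_\rho(\{\mathcal{P}_t^\nu\}_{t>0})(a)\|_{L^1(2I,dx)}\lesssim\|a\|_{L^2((0,1),dx)}m(2I)^{1/2}\lesssim 1$. On $(0,1)\setminus 2I$ I subtract $\mathcal{H}_0^\nu(x,x_I)$ and estimate, by the fundamental theorem of calculus for the $E_\rho$-valued map $y\mapsto\mathcal{H}_0^\nu(x,y)$ together with \eqref{I.8}, $\|\mathcal{H}_0^\nu(x,y)-\mathcal{H}_0^\nu(x,x_I)\|_{E_\rho}\le|y-x_I|\sup_{\xi\in I}\|\partial_y\mathcal{H}_0^\nu(x,\xi)\|_{E_\rho}\lesssim r_I|x-x_I|^{-2}$, using $|x-\xi|\sim|x-x_I|$ for $x\notin 2I$. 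Since $\int_0^1|a|\,dx\le 1$, this yields $\mathcal{V}_\rho(\{\mathcal{P}_t^\nu\}_{t>0})(a)(x)\lesssim r_I|x-x_I|^{-2}$ and hence $\int_{(0,1)\setminus 2I}r_I|x-x_I|^{-2}\,dx\lesssim 1$, uniformly in the position and size of $I$. For a general type $(a)$ atom whose support meets several $I_j^*$, I decompose $a=\sum_{j=N}^{M}2^{N-j}b_j$ with $b_j$ supported in $I_j^*$ and $\|b_j\|_\infty\lesssim m(I_j^*)^{-1}$, exactly as in \cite[Case 3]{DPRS}, and sum the previous estimates; the geometric series $\sum_j 2^{N-j}\lesssim 1$ closes the argument.

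The main obstacle is the far-field control of the type $(b)$ atoms near the origin: there is no cancellation to exploit, so one must rely solely on the bare size estimate \eqref{I.7} and on the convergence of the origin-end integrals, and this is precisely where $\nu>-1/2$ enters (it also underlies the $L^2$-boundedness and the atomic decomposition of $H^1((0,1),S_\nu)$ from \cite{DPRS,BDL}). Should the direct estimate at the coarsest interior scale prove too crude, one can instead mirror Case $II.2$ of Proposition \ref{Prop:4.1}, comparing $\mathcal{P}_t^\nu$ with the free half-line Bessel--Poisson semigroup $\mathfrak{P}_t^\nu$ through the comparison bound \eqref{eq:4.1} and invoking \cite[Theorem 1.3]{WYZ}; but with the weight-free kernel bounds \eqref{I.7} and \eqref{I.8} in hand I expect this comparison to be unnecessary.
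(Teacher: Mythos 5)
Your overall architecture (local $L^2$ piece, cancellation in the far field, bare size estimates for the non-cancellative atoms, and the DPRS-type decomposition when the support meets several $I_j^*$) matches the paper's, and your treatment of the type $(b)$ atoms and of the $L^2$/local parts is fine. The gap is in your type $(a)$ far-field step: you invoke \eqref{I.8} to claim $\|\partial_y\mathcal{H}_0^\nu(x,\xi)\|_{E_\rho}\lesssim|x-\xi|^{-2}$ for \emph{every} $x\notin 2I$ and $\xi\in I$. Although \eqref{I.8} is displayed in the paper for all $x\neq y$, it is obtained from Lemmas \ref{Prop:2.1} and \ref{Prop:2.2} by differentiating the weight $(xy)^{\nu+1/2}$, and the product-rule term $(\nu+1/2)\,x^{\nu+1/2}y^{\nu-1/2}\,\|H_0^\nu(x,y)\|_{E_\rho}$ is dominated by $|x-y|^{-2}$ only in the local region $x/2<y\le\min\{1,3x/2\}$ and in the regime $y\ge 3x/2$. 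In the regime $y\le x/2$ --- which is exactly the configuration ``atom supported in $I_j^*$ with $j\le -1$ and $x$ far to the right'', i.e.\ the only genuinely new case of this proposition --- that term is of size $y^{\nu-1/2}x^{-\nu-3/2}$, which for $-1/2<\nu<1/2$ is \emph{not} $O(|x-y|^{-2})$: it blows up as $y\to 0^+$. This is intrinsic to the $S_\nu$ setting, since $\Psi_n^\nu(y)\sim c\,y^{\nu+1/2}$ near the origin, so $\partial_y\Psi_n^\nu(y)\sim c\,y^{\nu-1/2}$ and hence $\partial_y\mathcal{P}_t^\nu(x,y)$ is unbounded as $y\to0^+$ when $\nu<1/2$. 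Consequently your pointwise bound $\mathcal{V}_\rho(\{\mathcal{P}_t^\nu\}_{t>0})(a)(x)\lesssim r_I|x-x_I|^{-2}$ on all of $(0,1)\setminus 2I$ is unjustified (indeed false) in part of the parameter range $\nu>-1/2$ covered by the statement.

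This failure is precisely why the paper splits the exterior region in two. On $(3I_j)\setminus(2I)$ it keeps the cancellation but works with the correct derivative bound $|\partial_y\mathcal{W}_u^\nu(x,y)|\lesssim(\sqrt{u}/y+1)\,u^{-1}e^{-c|x-y|^2/u}$; the extra term $\sqrt{u}/y$ produces $r_I/(|x-x_I|\,x_I)$ instead of $r_I/|x-x_I|^{2}$, and this is integrable only because $m(3I_j)\sim x_I$ there. On $(0,1)\setminus(3I_j)$ it abandons cancellation altogether and uses the bare size bound, where the convergence of $\int_{3\cdot 2^{j-1}}^{1}x^{-\nu-3/2}\,dx$ is exactly where the hypothesis $\nu>-1/2$ enters. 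Your scheme can be repaired along the same lines: in the regime $x\ge 2x_I$ replace the false bound by $\|\partial_y\mathcal{H}_0^\nu(x,\xi)\|_{E_\rho}\lesssim x_I^{\nu-1/2}x^{-\nu-3/2}+x^{-2}$ and observe that $r_I\int_{2x_I}^{1}\bigl(x_I^{\nu-1/2}x^{-\nu-3/2}+x^{-2}\bigr)dx\lesssim r_I/x_I\lesssim 1$, since $\nu>-1/2$ and $r_I\lesssim x_I$ for $I\subseteq I_j^*$; but as written, the step fails.
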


\begin{proof}
We shall proceed as in the proof of Proposition \ref{Prop:4.1}, by using \cite[Lemma 4.19]{BDL}. 
Cases I) and II) when $j\ge 1$  can be developed analogously to the corresponding ones in the previous Section \ref{Sect4.1}.\\

We now suppose that $a$ is a $S_\nu$-atom such that $\supp a\subseteq I,$ $\|a\|_\infty\le m(I)^{-1}$ and $\int_0^1a(x)dx=0$, where $I$ is an interval contained in $I_j^*$, for some $j\le -1$.\\

Since $\mathcal{V}_\rho(\{\mathcal{P}_t^\nu\}_{t>0})$ is bounded from $L^2((0,1),dx)$ into itself, we get 
\begin{equation}\label{eq:Prop5.1_2I}
\|\mathcal{V}_\rho(\{\mathcal{P}_t^\nu\}_{t>0})(a)\|_{L^1(2I,dx)}\lesssim 1.
\end{equation}
Moreover, by using that $\displaystyle\int_0^\infty a(y)dy=0$, we also have that
\begin{align*}
&\|\mathcal{V}_\rho(\{\mathcal{P}_t^\nu\}_{t>0}(a)\|_{L^1((3I_j)\setminus(2I),dx)} \\
& \qquad \lesssim 
    \int_{(3I_j)\setminus(2I)}\int_I\int_0^\infty \frac{|\mathcal{W}_u^\nu(x,y)-\mathcal{W}_u^\nu(x,x_I)|}{u}du|a(y)|dydx.
\end{align*}

If $z\in 2I$, then $z\sim x_I$ and if $x\in (0,1)\setminus (2I)$ and $z\in I$, then $|x-z|\sim|x-x_I|.$
Moreover, since  
\begin{equation*}
\mathcal{W}_u^\nu(x,y)
=(xy)^{\nu+1/2} W_u^\nu(x,y), \quad
x,y\in (0,1),    
\end{equation*}
by using \eqref{eq:2.1} and 
\eqref{eq:1.2} we get 
$$
|\partial_y\mathcal{W}_t^\nu(x,y)|
\lesssim \Big(\frac{\sqrt{t}}{y}+1 \Big) \frac{
e^{-c|x-y|^2/t}
}{t}, \quad x,y\in (0,1) \text{ and } t>0.
$$
It follows that
\begin{align*}
|\mathcal{W}_u^\nu(x,y)-&\mathcal{W}_u^\nu(x,x_I)|
\le
\Big|\int_{x_I}^y
| \partial_z\mathcal{W}_u^\nu(x,z)
|dz\Big| 
\lesssim 
\Big|\int_{x_I}^y \Big(\frac{\sqrt{u}}{z}+1 \Big) \frac{e^{-c|x-x_I|^2/u}}{u}dz\Big|\\
&\lesssim 
|{x_I}-y| \Big(\frac{\sqrt{u}}{x_I}+1 \Big) 
\frac{
e^{-c|x-x_I|^2/u}
}{u},\qquad x\in (3I_j)\setminus (2I), \: y\in I.
\end{align*}
Therefore, 
\begin{align}\label{eq:Prop5.1_3Ij-2I}
& \|\mathcal{V}_\rho(\{\mathcal{P}_t^\nu\}_{t>0})(a)\|_{L^1((3I_j)\setminus(2I),dx)} \nonumber \\
& \qquad \lesssim 
\int_{(3I_j)\setminus(2I)}\int_I|a(y)|\int_0^\infty |{x_I}-y| \Big(\frac{\sqrt{u}}{x_I}+1 \Big) \frac{e^{-c|x-x_I|^2/u}}{u^2}du dydx \nonumber \\
&\qquad \lesssim 
\int_{(3I_j)\setminus(2I)} r_I \Big(\frac{1}{|x-x_I|x_I}+\frac{1}{|x-x_I|^2} \Big) dx \nonumber \\
& \qquad \lesssim 
 \frac{m((3I_j)\setminus(2I))}{x_I}+\int_{(0,\infty)\setminus(2I)}\frac{r_I}{|x-x_I|^2}dx
\lesssim 1.
\end{align}

\quad

On the other hand, since $3 I_j=(0,\frac{3}{2}2^{j}]$, if $x\in  (0,1)\setminus (3 I_j)=(\frac{3}{2}2^{j},1)$ and $y\in I\subseteq (\frac{39}{80}2^j,\frac{81}{80}2^j],$ then \begin{equation*}
|x-y|\ge x-{\frac{81}{80}2^{j}}\ge x-{\frac{81}{120}x=\frac{39}{120}x}.    
\end{equation*}

According to \eqref{eq:2.1}, we get
\begin{align}\label{eq:Prop5.1_3Ijc}
\|\mathcal{V}_\rho(\{\mathcal{P}_t^\nu\}_{t>0})(a)\|_{L^1((0,1)\setminus (3I_j),dx)}
&\lesssim
\int_{(0,1)\setminus (3I_j)}\int_I|a(y)|\int_0^\infty \frac{(xy)^{\nu+1/2}}{u^{\nu+2} }{
e^{-c |x-y|^2/u}
}dudydx \nonumber \\
& \lesssim
\int_{(0,1)\setminus (3I_j)}\int_I|a(y)|(xy)^{\nu+1/2}\int_0^\infty \frac{
e^{-c x^2/u}
}{u^{\nu+2} }dudydx \nonumber \\
& \lesssim m(I)^{-1}
\int_{\frac{3}{2}2^{j}}^1\int_I\frac{(xy)^{\nu+1/2}}{x^{2\nu+2} }dydx  \nonumber \\
&\lesssim
2^{j(\nu+1/2)}\int_{\frac{3}{2}2^{j}}^\infty\frac{dx}{x^{\nu+3/2}} 
\lesssim 1.
\end{align}
From
\eqref{eq:Prop5.1_2I},
\eqref{eq:Prop5.1_3Ij-2I}
and
\eqref{eq:Prop5.1_3Ijc},
we conclude that
$$
\|\mathcal{V}_\rho(\{\mathcal{P}_t^\nu\}_{t>0}(a)\|_{L^1((0,1),dx)}\le C,
$$
where $C$ does not depend on the atom $a.$\\

By proceeding as in 
\cite[pp. 284-285]{DPRS} we can now prove that if $a$ is a $S_\nu-$atom  such that there is not any interval $I$ satisfying that $\supp a\subseteq I$, $\|a\|_\infty\le m(I)^{-1}$ and $I\subseteq I_j^*$, for some $j\in\mathbb{Z}\setminus\{0\},$ then
$$
\|\mathcal{V}_\rho(\{\mathcal{P}_t^\nu\}_{t>0}(a)\|_{L^1((0,1),dx)}\le C,
$$
where $C$ is independent of $a$.
\end{proof}

The proof of Theorem \ref{Th:1.4} can be finished now similarly to the proof of Theorem \ref{Th:1.2}.

\bibliographystyle{siam}
\bibliography{references}


\end{document}